\tikzset{->-/.style={decoration={markings,mark=at position #1 with {\arrow{>}}},postaction={decorate}}}
\definecolor{red}{rgb}{1,0,0} 
 \definecolor{darkgreen}{rgb}{0, .7, 0}
 \definecolor{purple}{rgb}{.7, 0, 1}
\newcommand{\Z}{{\mathbb{Z}}}
\newcommand{\C}{{\mathcal{C}}}
\newcommand{\link}{{\mbox{Link}}}
\newsavebox{\@brx}
\newcommand{\llangle}[1][]{\savebox{\@brx}{\(\m@th{#1\langle}\)}%
  \mathopen{\copy\@brx\mkern2mu\kern-0.9\wd\@brx\usebox{\@brx}}}
\newcommand{\rrangle}[1][]{\savebox{\@brx}{\(\m@th{#1\rangle}\)}%
  \mathclose{\copy\@brx\mkern2mu\kern-0.9\wd\@brx\usebox{\@brx}}}
\tikzset{mynode/.style={draw,circle,fill=black,inner sep=2pt,outer sep=0.5pt}}
\newtheorem{theorem}{Theorem}[section]
\newtheorem*{theorem*}{Theorem}
\newtheorem*{cor*}{Corollary}
\newtheorem*{thm:centralisers}{Theorem \ref{thm: centralisers structure}}
\newtheorem*{thm:abelian_splittings}{Theorem \ref{thm: abelian splitting of RAAGs}}
\newtheorem*{thm:direct_product}{Theorem \ref{thm: directly decomposable}}
\newtheorem*{thm:solvable_sbgps}{Theorem \ref{thm: solvable subgroups are metabelian}}
\newtheorem*{thm:two_gen}{Theorem \ref{thm: two generated subgroups}}
\newtheorem*{lemma*}{Lemma}
\newtheorem{lemma}[theorem]{Lemma}
\newtheorem{corollary}[theorem]{Corollary}
\theoremstyle{definition}
\newtheorem{definition}[theorem]{Definition}
\newtheorem{example}[theorem]{Example}
\theoremstyle{remark}
\newtheorem{remark}[theorem]{Remark}
\title{Pro-$\C$ RAAGs}
\begin{document}

\author{Montserrat Casals-Ruiz}\address{Ikerbasque -- Basque Foundation for Science and Matematika Saila,  UPV/EHU,  Sarriena s/n, 48940, Leioa -- Bizkaia, Spain}\email{montsecasals@gmail.com}

\author{Matteo Pintonello}\address{Matematika Saila,  UPV/EHU,  Sarriena s/n, 48940, Leioa -- Bizkaia, Spain}\email{mattepint@gmail.com}

\author{Pavel Zalesskii}\address{Departamento de Matem\'atica, Universidade de Bras\'ilia, Brasil}\email{pz@mat.unb.br}
\thanks{
The first two authors were supported by the Basque Government Grant IT1483-22 and the Spanish Government grants PID2019-107444GA-I00 and PID2020-117281GB-I00. The second author was supported by grant FPI-2018 of the Spanish Governement. The third author was supported by CNPq.}
\keywords{profinite groups, right-angled Artin groups.
AMS subject classification:  20E08, 20E18, 20E06.
}

\begin{abstract}
Let $\mathcal{C}$ be a class of finite groups closed under taking subgroups, quotients, and extensions with abelian kernel. The right-angled Artin pro-$\mathcal{C}$ group $G_\Gamma$ (pro-$\C$ RAAG for short) is the pro-$\mathcal{C}$ completion of the right-angled Artin group $G(\Gamma)$ associated with the finite simplicial graph $\Gamma$. 

In the first part, we describe structural properties of pro-$\mathcal{C}$ RAAGs. Among others, we describe the centraliser of an element and show that pro-$\C$ RAAGs satisfy the Tits' alternative, that standard subgroups are isolated, and that 2-generated pro-$p$ subgroups of pro-$\C$ RAAGs are either free pro-$p$ or free abelian pro-$p$. 

In the second part, we characterise splittings of pro-$\C$ RAAGs in terms of  the defining graph. More precisely, we prove that a pro-$\mathcal{C}$ RAAG $G_\Gamma$ splits as a non-trivial direct product if and only if $\Gamma$ is a join and it splits over an  abelian pro-$\mathcal{C}$ group if and only if a connected component of $\Gamma$ is a complete graph or it has a complete disconnecting subgraph. We then use this characterisation to describe an abelian JSJ decomposition of a pro-$\C$ RAAG, in the sense of Guirardel and Levitt \cite{Guirardel}. 
\end{abstract}
\maketitle

\section{Introduction}

Given a finite simplicial graph $\Gamma$, the associated right-angled Artin group (abbreviated RAAG) $G_\Gamma$ is the
the group given by the presentation:

\begin{equation}\label{eq:presentaion}
G(\Gamma) = \langle V(\Gamma) \mid [u, v] = 1 \hbox{ whenever } (u, v) \in E(\Gamma)\rangle.
\end{equation}

This class of groups has been widely studied in geometric group theory on account of their intrinsically rich structure, and their natural appearance in several branches of computer science and mathematics. Crucial examples, that shaped the theory of presentations of groups,
arise from the study of their subgroups, notably Bestvina and Brady’s example of a group that is homologically finite (of type FP) but not geometrically finite (in fact not of type $F_2$); and Mihailova’s example of a group with unsolvable subgroup membership problem. In recent years, using results of Wise, Haglund, and others, many important families of groups have been shown to be virtually virtual retracts of RAAGs, among others, hyperbolic 3-manifold groups, which was one of the key ingredients in Agol’s proof of the virtual Haken conjecture. Despite the abundance of subgroups, there are some structural results on subgroups, namely RAAGs satisfy the strong Tits alternative and in fact, 2-generated subgroups are either free or free abelian.

Many algebraic properties of RAAGs can be characterised in terms of the properties of the defining graph $\Gamma$. For instance, a RAAG $G(\Gamma)$ splits as a non-trivial free product if and only if $\Gamma$ is not connected; and $G(\Gamma)$ splits as a non-trivial direct product if and only if $\Gamma$ is a join. More interestingly, the splittings of $G(\Gamma)$ over an abelian group are also encoded by the graph $\Gamma$, namely, $G(\Gamma)$ splits over an abelian group if and only if either $\Gamma$ is disconnected, or it is a complete graph or it has a disconnecting complete subgraph (see \cite{Clay} and \cite{Groves}). One can further use the natural splittings determined by complete disconnecting subgraphs of $\Gamma$ and refine them to obtain a JSJ-decomposition for $G(\Gamma)$. An overview of RAAGs can be found in \cite{Charney, Koberda}.

RAAGs are linear and have excellent residual properties thus one can consider the pro-$\mathcal{C}$ completion $G_\Gamma$ of a RAAG $G(\Gamma)$, called the \emph{pro-$\mathcal{C}$ right-angled Artin group}. In other words, the pro-$\mathcal{C}$ group $G_\Gamma$ is given by the pro-$\mathcal{C}$ presentation given in \eqref{eq:presentaion}. In fact, in \cite{Wilkes} it is shown that RAAGs are distinguished from each other by their pro-$p$ completions for any choice of prime $p$, i.e. the pro-$p$ RAAGs $G_\Gamma$ and $G_\Delta$ are isomorphic if and only if so are the RAAGs $G(\Gamma)$ and $G(\Delta)$. 

In this article, we study pro-$\mathcal{C}$ RAAGs, where $\C$ is a class of finite groups closed under taking subgroups, quotients, and extensions. If $\C$ is the class of all finite, finite $p$ or finite soluble (etc.) groups we shall use the terms profinite, pro-$p$, prosoluble (etc.) RAAG. Note that the subgroup structure of a profinite RAAG is also very rich since the profinite completion of virtual retracts (and so the profinite completion of hyperbolic 3-manifold groups, limit groups, one relator groups with torsion) is virtually a subgroup of the profinite completion of a pro-$\C$ RAAG.

We begin by studying some structural results on subgroups of RAAGs. More precisely, we describe subgroups of pro-$\C$ RAAGS that do not contain free non-abelian pro-$\C$ subgroups. This applies for example to subgroups that satisfy a law and in particular to soluble subgroups of  pro-$\C$ RAAGS. In fact, we show that pro-$\C$ RAAGs satisfy the Tits' alternative:

\begin{thm:solvable_sbgps}[Tits' alternative]
Let $H$ be a closed subgroup of a pro-$\mathcal{C}$ RAAG $G_\Gamma$ that does not contain a free non-abelian pro-$p$ subgroup for any $p$. Then $H$ is metabelian and polycyclic. Moreover, if $H$ is pro-$p$, then $H$ is abelian.
\end{thm:solvable_sbgps}

As a hyperbolic virtually compact special group is a virtual retract of a RAAG, Theorem \ref{thm: solvable subgroups are metabelian} can be considered as a generalization of Theorems D and E of \cite{WilZal17}.  

In the next theorem we  describe 2-generated pro-$p$ subgroups of $G_\Gamma$.

\begin{thm:two_gen}
Let $H$ be a two-generated  pro-$p$ subgroup of a pro-$\C$ RAAG $G_\Gamma$. Then $H$ is either free pro-$p$ or free abelian.
\end{thm:two_gen}

We next turn our attention to an important class of subgroups, centralisers of elements, and give the following description:

\begin{thm:centralisers}
Let $G=G_\Gamma$ be a pro-$\C$ RAAG and let $g_0\in G$. Then there is an element $g$ in the conjugacy class of $g_0$ such that its centraliser is of the form

$$C_G(g)=H_1 \times \cdots \times H_s \times \overline{\langle \link(g)\rangle }$$
where:
\begin{enumerate}
    \item $\alpha(H_i),~ \alpha(H_j),~\link(g)$ are all disjoint for $i\neq j$;
    \item $G_{\alpha(g)}=G_{\alpha(H_1)}\times \cdots \times G_{\alpha(H_s)}$
    \item $H_i$ are projective  pro-$\C$ groups with center;
    \item if $G$ is pro-$p$, $H_i=\overline{\langle h_i \rangle}$ and $g= h_1^{k_1} \cdots h_s^{k_s}$, for some $k_i\in \mathbb Z_p$.
\end{enumerate}
\end{thm:centralisers}

As in the abstract case, some of the algebraic properties of the pro-$\mathcal{C}$ RAAG $G_\Gamma$ can be characterised in terms of properties of the defining graph $\Gamma$. In \cite{Wilkes}, it is proven that the profinite RAAG $G_\Gamma$ splits as a non-trivial profinite free product if and only if $\Gamma$ is disconnected. In \cite{SnopceZalesskii}, Snopce and Zalesskii proved that the pro-$p$ RAAGs that are Block-Kato pro-$p$ groups are precisely the pro-$p$ RAAGs for which each closed subgroup is itself a right-angled Artin pro-$p$
group (possibly infinitely generated) and this class coincides with the pro-$p$ RAAGs whose defining graph does not contain squares and lines with four vertices as induced subgraphs. Kochloukova and the third author \cite[Theorems C and D]{KZ23} studied the finite generation of a normal subgroup pro-$p$ RAAG in terms of the relation between the defining graph $\Gamma$ and the natural quotient map; in fact finitely generated normal subgroups of pro-$\C$ RAAG were described in \cite[Theorem A]{KZ23} in terms of the quotients (the abstract version of the result was proved by J.Lopez de Gamiz Zearra and the first author in \cite{CRLdG22}). 

In this context, we show that one can characterise when a pro-$\mathcal{C}$ RAAG splits as a non-trivial direct product in terms of the defining graph, namely, we prove the following:

\begin{thm:direct_product}
Let $G_\Gamma$ be a pro-$\mathcal{C}$ RAAG. Then $G_\Gamma$ decomposes as a non-trivial direct product if and only if $\Gamma$ is a join.
\end{thm:direct_product}

We next study abelian splittings of pro-$\mathcal{C}$ RAAGs and characterise the existence in terms of the defining graph. In analogy to the abstract case, see \cite{Groves}, we prove the following:

\begin{thm:abelian_splittings}
Let $G=G_\Gamma$ be a pro-$\mathcal{C}$ RAAG associated with a connected graph $\Gamma$. Then $G$ splits over an abelian group if and only if either $\Gamma$ is a complete graph or $\Gamma$ has a disconnecting complete graph.
\end{thm:abelian_splittings}

We then describe an abelian JSJ decomposition of a pro-$\C$ RAAGs (in the sense of Guirardel and Levitt \cite{Guirardel}). This decomposition is given explicitly in terms of the defining graph, see Theorem \ref{thm: A JSJ decomposition}.

Bass-Serre theory of groups acting on trees is one of the main tools for proving the splitting results in the abstract case. In our case, we use a profinite analog of this theory that has been developed mainly by Mel'nikov, Ribes, and Zalesskii that the reader can find in \cite{RibesBook} (see also \cite{NewHorizons} for the pro-$p$ version of it).

\section{Profinite groups acting on profinite trees}

In this section, we describe the analogue results of Bass-Serre theory for profinite groups acting on profinite trees. A deeper description can be found in \cite{RibesBook} or, for pro-$p$ groups, in \cite{RZBook} and \cite{NewHorizons}.

Throughout the article, we assume $\mathcal{C}$ to be a class of finite groups closed under taking subgroups, homomorphic images, direct product, and extensions with abelian kernel. The \emph{primes involved in} $\C$ is the set of primes that divide the order of a group $G\in \C$ and are denoted by $\pi(\C)$. As usual, if $\pi$ is a set of primes, a pro-$\pi$ group is the inverse limit of finite groups of $\pi$-order.

\begin{definition}[Profinite graph]
A profinite graph is a profinite space $\Gamma$ with a distinguished non-empty closed subset $V(\Gamma)$ and two continuous maps (called \emph{incidence maps}) $d_0,d_1:\Gamma\to V(\Gamma)$ which restrict to the identity on $V(\Gamma)$.
\end{definition}
The elements of $V(\Gamma)$ are the \emph{vertices} of the profinite graph, whereas the elements of $E(\Gamma):=\Gamma\smallsetminus V(\Gamma)$ are the \emph{edges}. A \emph{morphism} $\alpha:\Gamma\to \Delta$ is a map of profinite spaces respecting incidence maps, so $\alpha d_i=d_i \alpha$ for $i\in \{0,1\}$. A profinite graph is the inverse limit of its finite quotients graphs (see \cite[Proposition 1.5]{NewHorizons}) and we say that $\Gamma$ is \emph{connected} if all of these finite quotient graphs are connected (as abstract finite graphs).\\
For each profinite graph $\Gamma$, we define $(E^*(\Gamma),*)=(\Gamma/V(\Gamma),*)$ the pointed profinite quotient space, where the distinguished point is the representative of $V(\Gamma)$. For each prime $p$, we have a complex of free profinite $\mathbb{F}_p$-modules
\begin{equation} \label{exactsequence}
   0\longrightarrow \mathbb{F}_p[[E^*(\Gamma),*]] \xrightarrow{\delta} \mathbb{F}_p[[V(\Gamma)]] \xrightarrow{\epsilon} \mathbb{F}_p\longrightarrow 0
\end{equation}
where the maps are defined as $\delta(e)=d_1(e)-d_0(e)$ for each $e\in E^*(\Gamma)$ and $\epsilon(v)=1$ for each $v\in V(\Gamma)$.

\begin{definition}[Profinite tree]
A profinite graph $\Gamma$ is a \emph{pro-$p$ tree} if the associated chain complex (\ref{exactsequence}) is an exact sequence.
A profinite graph is a pro-$\C$ tree if the associated chain complex (\ref{exactsequence}) is an exact sequence for each prime $p\in \pi(\C)$.
\end{definition}

In particular, a pro-$\C$ tree is a pro-$\pi$ tree with $\pi=\pi(\C)$ (see \cite[Proposition 2.4.2]{RibesBook}). As every pro-$\C$ group is a pro-$\pi(\C)$ group, we can often reduce the study of pro-$\C$ groups acting on pro-$\C$ trees to the study of pro-$\pi$ groups acting on pro-$\pi$ trees. For this reason, many theorems we refer to are originally stated in the pro-$\pi$ version in the sources we cite but they are still valid for pro-$\mathcal{C}$ groups and trees, and we state them in this form.

All the subtrees of a pro-$\mathcal{C}$ tree $\Gamma$ are partially ordered by inclusion and the minimal subtree containing two vertices $v,w\in V(\Gamma)$, which we denote as $[v,w]$, is called a \emph{geodesic}.

Some results that are valid for abstract trees are true for pro-$\C$ trees too. For example, we will make use of Helly's Theorem for pro-$\mathcal{C}$ trees.

\begin{lemma}[Helly Property]\label{lem: Helly}
    Let $S=\{T_i, i\in I\}$ be an arbitrary family of non-empty pro-$\mathcal{C}$ subtrees of a pro-$\mathcal{C}$ tree $T$ and suppose $T_i\cap T_j\neq \varnothing$ for each $i,j\in I$. Then $\bigcap_{i\in I} T_i \neq \varnothing$.
\end{lemma}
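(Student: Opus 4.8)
The plan is to reduce to a finite family of subtrees by compactness, and then to prove the finite case by induction on the number of subtrees, the only real content being the case of three subtrees, which we settle by producing the \emph{median} of a triple of vertices. First, since $T$ is a profinite space it is compact and each $T_i$ is closed in $T$, so $\bigcap_{i\in I}T_i\neq\varnothing$ as soon as $\bigcap_{i\in F}T_i\neq\varnothing$ for every finite $F\subseteq I$. Hence it suffices to prove: if $T_1,\dots,T_n$ are pro-$\mathcal{C}$ subtrees of $T$ with $T_i\cap T_j\neq\varnothing$ for all $i,j$, then $\bigcap_{i=1}^n T_i\neq\varnothing$, and we induct on $n$; the cases $n\le 2$ are the hypothesis.

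For $n=3$, choose vertices $a\in T_1\cap T_2$, $b\in T_2\cap T_3$, $c\in T_1\cap T_3$ — we may assume they are vertices since pro-$\mathcal{C}$ subtrees are subgraphs, hence closed under the incidence maps. Since $[a,b]$ is by definition the smallest pro-$\mathcal{C}$ subtree containing $a$ and $b$, we get $[a,b]\subseteq T_2$, and likewise $[b,c]\subseteq T_3$ and $[a,c]\subseteq T_1$. Now $[a,b]\cap[a,c]$ contains $a$, and being a non-empty intersection of two pro-$\mathcal{C}$ subtrees it is itself a pro-$\mathcal{C}$ subtree (\cite{RibesBook}), contained in the geodesic $[a,b]$ and therefore of the form $[a,m]$ for a vertex $m$. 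Writing $[a,b]=[a,m]\cup[m,b]$ and $[a,c]=[a,m]\cup[m,c]$ with $[a,m]\cap[m,b]=[a,m]\cap[m,c]=\{m\}$, one obtains $[m,b]\cap[m,c]=\{m\}$ (a common point lies in $[a,b]\cap[a,c]=[a,m]$, hence in $[a,m]\cap[m,b]=\{m\}$); consequently $[m,b]\cup[m,c]$ is a pro-$\mathcal{C}$ subtree containing $b$ and $c$, so $[b,c]\subseteq[m,b]\cup[m,c]$. As $[m,b]\smallsetminus\{m\}$ and $[m,c]\smallsetminus\{m\}$ are disjoint and clopen in $([m,b]\cup[m,c])\smallsetminus\{m\}$, connectedness of $[b,c]$ forces $m\in[b,c]$. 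Thus $m\in[a,b]\cap[b,c]\cap[a,c]\subseteq T_1\cap T_2\cap T_3$.

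For the inductive step $n\ge 4$, replace $T_{n-1},T_n$ by $T_{n-1}\cap T_n$, which is non-empty by hypothesis and hence a pro-$\mathcal{C}$ subtree. The family $T_1,\dots,T_{n-2},\,T_{n-1}\cap T_n$ of $n-1$ pro-$\mathcal{C}$ subtrees is still pairwise intersecting: for $i\le n-2$ the already-proven case $n=3$ applied to $T_i,T_{n-1},T_n$ gives $T_i\cap T_{n-1}\cap T_n\neq\varnothing$, and the remaining intersections are among the original $T_i$. By the induction hypothesis this family has a common point, which is a common point of $T_1,\dots,T_n$, completing the induction.

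The delicate part is the case $n=3$: it rests on a package of foundational facts about geodesics in pro-$\mathcal{C}$ trees — existence and uniqueness of $[v,w]$ (so that subtrees containing $v,w$ are convex), the fact that a non-empty intersection of two pro-$\mathcal{C}$ subtrees is again a pro-$\mathcal{C}$ subtree, the identification of a subtree of a geodesic through $a$ with an initial segment $[a,m]$, and the fact that an interior vertex separates a geodesic. Each of these holds, but, unlike in the abstract setting (where connectedness of intersections and separation are immediate), they are genuinely profinite statements; we take them from \cite{RibesBook}.
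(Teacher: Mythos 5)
Your proof has the same skeleton as the paper's: the compactness reduction to finite subfamilies, induction on $n$ with the three-tree case singled out, and the identical inductive step of replacing $T_{n-1},T_n$ by $T_{n-1}\cap T_n$ and invoking the case $n=3$ to restore pairwise intersection. The only real divergence is in the case $n=3$. The paper takes a vertex $v_{12}\in T_1\cap T_2$ and $v_{13}\in T_1\cap T_3$, notes $[v_{12},v_{13}]\subseteq T_1$, and cites \cite[Lemma 2.8]{NewHorizons} to conclude that this geodesic meets $T_2\cap T_3$; you instead reprove that lemma by constructing the median of $a,b,c$ by hand. Your construction is correct in outline, but be aware that it leans on facts you do not actually find stated in \cite{RibesBook}: the identification of a pro-$\mathcal{C}$ subtree of $[a,b]$ containing $a$ with an initial segment $[a,m]$ (true, but it needs the inverse-limit description of geodesics and an argument that the endpoints $m_i$ in the finite quotients are compatible), and the final separation step, where ``connectedness of $[b,c]$'' cannot mean topological connectedness --- profinite graphs are totally disconnected as spaces --- but must be graph-theoretic connectedness, so you need to check that the two clopen pieces of $[b,c]\smallsetminus\{m\}$ are subgraphs and then pass to a finite quotient graph to get the contradiction. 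None of this is wrong, but it is exactly the content that the cited Lemma 2.8 packages; as written, your $n=3$ case is a sketch of a proof of that lemma rather than a complete argument. Everything else matches the paper step for step.
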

\begin{proof}
As the pro-$\mathcal{C}$ tree $T$ is compact, it suffices to prove that every finite subset of $S$ has a non-empty intersection, so we prove the result for $S_k=\{T_i, i=1,\ldots,k\}$ by induction on $k$. 

If $k=1$ or $k=2$, the statement holds trivially from the assumption that the trees pairwise intersect. We treat the case $k=3$ separately as it is going to be used in the inductive step. Let $v_{12},v_{13}\in V(T)$ be vertices in $T_1\cap T_2$ and $T_1\cap T_3$ respectively. The geodesic $[v_{12},v_{13}]$ is contained in $T_1$, and by \cite[Lemma 2.8]{NewHorizons} we have that $[v_{12},v_{13}]\cap T_2\cap T_3\neq \varnothing$, hence $T_1\cap T_2\cap T_3\neq \varnothing$.
  
Suppose by induction that the result holds for each set with less than $k$ trees and consider the set $S_k=\{T_i \mid i=1,\ldots,k\}$. Define $\overline{T}=T_{k-1}\cap T_k$. Notice that by \cite[Proposition 2.4.9]{RibesBook}, the intersection of any family of pro-$\mathcal{C}$ subtrees is still a pro-$\mathcal{C}$ subtree (possibly empty) and so $\overline T$ is a pro-$\mathcal{C}$ tree. By induction (using the case $k=3$) we have that $\overline{T}\cap T_i\neq \varnothing$ for all $i\in \{1,\ldots,k-2\}$, hence we can apply the inductive hypothesis to the family $\overline{S}_k=\{\overline{T},T_1, \ldots,T_{k-2}\}$, which has by definition the same intersection as the family $S_k$ and the result follows.
\end{proof}

A pro-$\mathcal{C}$ group $G$ acts on a pro-$\mathcal{C}$ tree $\Gamma$ if it respects the incident maps, i.e. $g d_i=d_i g$ for $i\in \{0,1\}$, $g\in G$, and the action is continuous.
If an element $g\in G$ fixes at least a point of $\Gamma$ we say that $g$ is \emph{elliptic}, on the other hand, if $g$ does not fix any point, then $g$ is a hyperbolic element. We moreover say that a subgroup $H\leq G$ is elliptic if the whole subgroup fixes a point of $\Gamma$.\\
Whenever an element $g\in G$ (or a subgroup $H\leq G$) is elliptic, we can consider the set of fixed points $T^g$ (respectively $T^H$), that is a pro-$\mathcal{C}$ tree by  \cite[Theorem 4.1.5]{RibesBook}. 

When studying actions of groups on trees, we often need to restrict to minimal invariant subtrees, whose existence is guaranteed by the following lemma.

\begin{lemma}[Proposition 2.4.12 of \cite{RibesBook}] \label{lem: existence minimal subtrees}
If $G$ is a pro-$\mathcal{C}$ group acting on a pro-$\mathcal{C}$ tree $\Gamma$, then there exists a minimal $G$-invariant pro-$\mathcal{C}$ subtree $\Delta$ of $\Gamma$. If $\Delta$ contains more than one vertex, then it is unique.
\end{lemma}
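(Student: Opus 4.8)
The plan is to obtain existence from a compactness argument together with Zorn's Lemma, and uniqueness (under the extra hypothesis) from the claim that two minimal $G$-invariant pro-$\mathcal{C}$ subtrees cannot be disjoint. For \emph{existence}, I would consider the family $\mathcal{S}$ of all non-empty closed $G$-invariant pro-$\mathcal{C}$ subtrees of $\Gamma$, ordered by reverse inclusion; it contains $\Gamma$, so it is non-empty. If $\{\Delta_i\}_i$ is a chain in $\mathcal{S}$, then $\bigcap_i \Delta_i$ is non-empty because $\Gamma$ is compact and a nested family of non-empty closed sets has the finite intersection property; it is a pro-$\mathcal{C}$ subtree by \cite[Proposition 2.4.9]{RibesBook}; and it is $G$-invariant, being an intersection of $G$-invariant sets. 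So every chain has an upper bound in $(\mathcal{S},\supseteq)$, and Zorn's Lemma yields a minimal element $\Delta$, which is exactly a minimal $G$-invariant pro-$\mathcal{C}$ subtree.

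For \emph{uniqueness}, assume $|V(\Delta)|>1$ and let $\Delta'$ be any minimal $G$-invariant pro-$\mathcal{C}$ subtree. By \cite[Proposition 2.4.9]{RibesBook} the set $\Delta\cap\Delta'$ is a pro-$\mathcal{C}$ subtree, and it is $G$-invariant; if it is non-empty it is contained in each of the minimal subtrees $\Delta,\Delta'$, hence equals both, so $\Delta=\Delta'$. It therefore suffices to exclude $\Delta\cap\Delta'=\varnothing$. If they were disjoint, I would invoke the bridge between two disjoint pro-$\mathcal{C}$ subtrees: a canonical gate vertex $a\in V(\Delta)$ at which every geodesic starting in $\Delta'$ enters $\Delta$ (equivalently, the closest-point projection $V(\Delta')\to V(\Delta)$ is constant with value $a$). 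Since $g\Delta=\Delta$ and $g\Delta'=\Delta'$ for every $g\in G$ and the gate is determined canonically by the pair $(\Delta,\Delta')$, the vertex $a$ is fixed by all of $G$. But then $\{a\}$ is a $G$-invariant pro-$\mathcal{C}$ subtree strictly contained in $\Delta$ (strictly, because $\Delta$ has more than one vertex), contradicting the minimality of $\Delta$. Hence $\Delta\cap\Delta'\neq\varnothing$ and $\Delta=\Delta'$.

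The hard part is the bridge: a profinite tree carries no length function, so one cannot literally minimise the distance between $\Delta$ and $\Delta'$, and the existence and $G$-invariance of the gate $a$ is the real content. The natural route is to pass to finite quotient graphs $\Gamma=\varprojlim\Gamma_j$, chosen compatibly with the $G$-action, so that in each $\Gamma_j$ the images $\Delta_j,\Delta'_j$ are genuine subtrees of a finite tree; disjointness of $\Delta$ and $\Delta'$ is then visible already in some $\Gamma_{j_0}$ by compactness, the gate $a_j$ of $\Delta_j$ towards $\Delta'_j$ is an honest $G$-fixed vertex for $j\ge j_0$, and passing to the inverse limit gives $\varnothing\neq\varprojlim_{j\ge j_0}\Delta_j^{G}=\Delta^{G}$, which produces the fixed vertex $a$ inside $\Delta$. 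The delicate point that keeps this from being a triviality is that a pro-$\mathcal{C}$ tree need not be an inverse limit of finite trees, so one must either set up this approximation with care or simply quote the profinite geodesic/bridge machinery of \cite{RibesBook} directly; that is where I would expect the bulk of the work to lie.
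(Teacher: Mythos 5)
This lemma is not proved in the paper at all: it is imported verbatim as Proposition 2.4.12 of \cite{RibesBook}, so there is no in-paper argument to compare against. Your outline reconstructs the standard proof from that source. The existence half is complete and correct: Zorn's Lemma on the family of non-empty closed $G$-invariant pro-$\mathcal{C}$ subtrees ordered by reverse inclusion, with chains handled by compactness (finite intersection property) plus the fact that intersections of pro-$\mathcal{C}$ subtrees are pro-$\mathcal{C}$ subtrees (\cite[Proposition 2.4.9]{RibesBook}). The uniqueness half is also structured correctly: the intersecting case is immediate from minimality, and the disjoint case is excluded by producing a canonical $G$-fixed ``gate'' vertex, which contradicts minimality of a subtree with more than one vertex.

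The one genuine gap is exactly where you locate it: the existence and canonicity of the gate vertex between two disjoint $G$-invariant pro-$\mathcal{C}$ subtrees is asserted, not proved. This is the content of the bridge lemma in \cite{RibesBook} (the statement that for disjoint non-empty subtrees $\Delta,\Delta'$ there is a unique geodesic $[v,v']$ with $v\in\Delta$, $v'\in\Delta'$ meeting each subtree only in its endpoint), and citing it closes the argument. Your fallback sketch via finite quotients, however, would fail as written: the finite quotient graphs $\Gamma_j$ of a profinite tree are connected but generally contain cycles, so the images $\Delta_j,\Delta'_j$ need not admit a well-defined gate, and ``disjointness visible in some $\Gamma_{j_0}$'' does not hand you an honest finite-tree bridge to pass to the limit. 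So the proof is correct modulo quoting the profinite bridge machinery, but the proposed elementary substitute for it does not work.
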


The action of $G$ on a pro-$\mathcal{C}$ tree $\Gamma$ is \emph{irreducible} if $\Gamma$ has no proper $G$-invariant subtrees. From now on, for each subset $S$ of a group $G$ acting on a pro-$\mathcal{C}$ tree $T$, we denote by $T_S$ the minimal pro-$\mathcal{C}$ subtree on which $\langle S \rangle \leq G$ acts. Similarly to the abstract case, when elements commute we can obtain some additional information on their action.

\begin{lemma}[Invariant trees of commuting elements] \label{lem: culler vogtmann}
 Let $G$ be a pro-$\mathcal{C}$ group acting faithfully on a pro-$\mathcal{C}$ tree $T$.
\begin{enumerate}
  \item Let $g,h\in G$ be such that $h$ normalises $\langle g \rangle$, then $h$ leaves $T_g$ invariant and in particular, if $[g,h]=1$ then $T_g=T_h$.
  \item Let $S=\{g_1,\ldots,g_k\}$ be a set of elements such that the action of each $g_i$, $i\in \{1,\ldots,k\}$, is elliptic. If $[g_i,g_j]=1$ for each $i,j\in \{1,\ldots,k\}$, then there exists a vertex of $T$ fixed by the whole set $S$.
\end{enumerate}
\end{lemma}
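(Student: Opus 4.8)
The plan is to treat the two parts separately, each reducing to tools available in the profinite Bass--Serre dictionary.

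For part (1), first I would observe that since $h$ normalises $\overline{\langle g\rangle}$, for every $n$ the element $h$ conjugates $g^n$ into the procyclic group $\overline{\langle g\rangle}$; in particular $h T_g = T_{hgh^{-1}}$, and the minimal subtree $T_{hgh^{-1}}$ of $\overline{\langle hgh^{-1}\rangle} = \overline{\langle g\rangle}$ coincides with $T_g$ by uniqueness of the minimal invariant subtree (Lemma~\ref{lem: existence minimal subtrees}; the case where $T_g$ is a single vertex is immediate, since then $g$ is elliptic and $h$ permutes the fixed-point tree $T^g$, which it must preserve as a minimal invariant piece, or one argues directly that $T^{g} = T^{hgh^{-1}}$). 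Hence $h$ leaves $T_g$ invariant. When $[g,h]=1$, the symmetric argument gives that $g$ leaves $T_h$ invariant; then $T_g \cap T_h$ is a non-empty (it contains $T_g$ or is acted on by both) $\overline{\langle g,h\rangle}$-invariant subtree contained in both minimal subtrees, so by minimality $T_g = T_g\cap T_h = T_h$. I should be slightly careful here: $T_g$ is minimal for $\overline{\langle g\rangle}$, not for $\overline{\langle g,h\rangle}$, so the cleanest route is: $T_g$ is $g$-invariant and $h$-invariant, hence $\overline{\langle g,h\rangle}$-invariant, and it contains the minimal $\overline{\langle h\rangle}$-invariant subtree $T_h$ (any invariant subtree contains the minimal one when the latter exists and is non-trivial; if $T_h$ is a vertex this is the usual fixed-point argument); symmetrically $T_h \supseteq T_g$, so they are equal.

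For part (2), I would argue by induction on $k$. The key input is the Helly Property (Lemma~\ref{lem: Helly}): the fixed-point sets $T^{g_1}, \dots, T^{g_k}$ are non-empty pro-$\mathcal{C}$ subtrees of $T$ (non-empty by ellipticity, and pro-$\mathcal{C}$ subtrees by \cite[Theorem 4.1.5]{RibesBook}), so it suffices to show that they pairwise intersect. For a pair $g_i, g_j$ with $[g_i,g_j]=1$: the element $g_j$ preserves $T^{g_i}$ (since if $v$ is fixed by $g_i$ then so is $g_j v$, as $g_i(g_j v) = g_j(g_i v) = g_j v$), and $g_j$ acts on the pro-$\mathcal{C}$ tree $T^{g_i}$; being elliptic on $T$ it is elliptic on $T^{g_i}$ as well — here one needs that an elliptic element of $G$ acting on an invariant pro-$\mathcal{C}$ subtree still has a fixed point in that subtree, which follows because its global fixed-point set $T^{g_j}$ meets $T^{g_i}$ exactly when $g_j$ is elliptic on $T^{g_i}$; more directly, $g_j|_{T^{g_i}}$ generates a procyclic group acting on a pro-$\mathcal{C}$ tree, and a procyclic pro-$\mathcal{C}$ group acting on a pro-$\mathcal{C}$ tree with an elliptic generator fixes a point. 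Thus $T^{g_i}\cap T^{g_j} = (T^{g_i})^{g_j} \neq \varnothing$. Applying Helly then yields a common fixed vertex for all of $S$.

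I expect the main obstacle to be the point just raised: showing that ellipticity of $g_j$ on the ambient tree $T$ forces ellipticity of $g_j$ restricted to the invariant subtree $T^{g_i}$ (equivalently, that $T^{g_i}\cap T^{g_j}\neq\varnothing$). This is not automatic in the profinite setting the way it is for abstract trees, but it can be handled by noting that $T^{g_j}$ is itself a pro-$\mathcal{C}$ subtree and that a procyclic pro-$\mathcal{C}$ group which is the closure of an elliptic element acts on every invariant pro-$\mathcal{C}$ subtree with a fixed point (a procyclic pro-$\mathcal{C}$ group acting on a pro-$\mathcal{C}$ tree is either elliptic everywhere or hyperbolic, and here it is elliptic on $T \supseteq T^{g_i}$). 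Once this is in place, both parts are short, and the rest is an application of Lemmas~\ref{lem: Helly} and~\ref{lem: existence minimal subtrees}.
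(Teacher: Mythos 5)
Your proposal is correct and follows essentially the same route as the paper: part (1) via $h\cdot T_g=T_{hgh^{-1}}$ and minimality, and part (2) by showing the fixed-point subtrees $T^{g_i}$ pairwise intersect and then invoking the Helly property. The one step you flag as the main obstacle --- that an elliptic element of $G$ fixes a vertex of any invariant pro-$\mathcal{C}$ subtree --- is exactly what the paper supplies by citing \cite[Corollary 4.1.9]{RibesBook}, so your argument is complete once that reference is attached.
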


\begin{proof}
Item (1) follows immediately by observing that $h\cdot (T_g)=T_{hgh^{-1}} \subseteq T_g$.\\
We first prove Item (2) for two elements $g_1,g_2\in G$. If both $g_1$ and $g_2$ are elliptic, consider the subtrees $T^{g_1}$ and $T^{g_2}$ fixed by $g_1$ and $g_2$ respectively; by Item (1) we have that $T^{g_1}$ is a non-empty pro-$\mathcal{C}$ subtree invariant under the action of $g_2$. By \cite[Corollary 4.1.9]{RibesBook}, $g_1$ fixes a vertex of $T^{g_2}$, hence $T^{g_2} \cap T^{g_1}$ is not trivial.\\
Applying the case $k=2$ to each pair, we have that $T^{g_i}\cap T^{g_j}\neq \varnothing$ and $g_i$ and $g_j$ fix pointwise the intersection for each $i,j\in \{1,\ldots,k\}$ so we can apply Lemma \ref{lem: Helly} to the set $\{T^{g_1},\ldots,T^{g_k}\}$ and conclude that $\bigcap_{i\in I} T_i \neq \varnothing$ and each $g_i$ fixes this intersection thus Item (2) follows.
\end{proof}

Let $\Delta=(V(\Delta), E(\Delta))$ be a graph. We set $m\in \Delta$ if $m\in V(\Delta)$ or $m\in E(\Delta)$. 

A finite graph of pro-$\mathcal{C}$ groups $(\mathcal{G},\Delta)$ over a finite abstract graph $\Delta$ is a collection of pro-$\mathcal{C}$ groups $\mathcal{G}(m)$ for each $m\in \Delta$, and continuous monomorphisms $\partial_i: \mathcal{G}(e)\longrightarrow \mathcal{G}(d_i(e))$ for each edge $e\in E(\Delta)$, $i\in\{0,1\}$. We only work with finite graphs of pro-$\mathcal{C}$ groups, in the sense that the graph $\Delta$ is finite, but it is possible to define an analogous concept for graphs of pro-$\mathcal{C}$ groups over profinite graphs $\Delta$ (see Chapter 6 of \cite{RibesBook}). A graph of groups is \emph{reduced} if edge groups corresponding to edges that are not loops are properly contained in adjacent vertex groups.

\begin{definition}[Pro-$\C$ fundamental group of a graph of pro-$\C$ groups]
Given a finite graph of pro-$\mathcal{C}$ groups $(\mathcal{G},\Delta)$, we define its pro-$\mathcal{C}$ fundamental group $G= \Pi_1(\mathcal{G},\Delta)$ as follows. Fix a maximal subtree $D$ of $\Delta$; then $G$ is a pro-$\mathcal{C}$ group, together with a collection of continuous homomorphisms
$$\nu_m: \mathcal{G}(m)\longrightarrow G\quad (m\in \Delta)$$
and a continuous map $E(\Delta) \longrightarrow G$, denoted $e\mapsto t_e$ ($e\in E(\Delta)$), such that $t_e=1$ if $e\in E(D)$, and such that
$$(\nu_{d_0 (e)}\partial_0)(x)= t_e(\nu_{d_1 (e)}\partial_1)(x)t_e^{-1}\quad \forall x\in \mathcal{G}(e), \ e\in E(\Delta);$$
that satisfies the following universal property:\\
  whenever we have
    \begin{itemize}
        \item a pro-$\mathcal{C}$ group $H$,
        \item a collection of continuous homomorphisms $\beta_m: \mathcal{G}(m)\longrightarrow H$, $(m\in \Delta)$,
        \item a map $e\mapsto s_e$ ($e\in E(\Delta)$) with $s_e=1$ if $e\in E(D)$, and
        \item $(\beta_{d_0 (e)}\partial_0)(x)= s_e(\beta_{d_1
        (e)}\partial_1)(x)s_e^{-1}\quad \forall x\in \mathcal{G}(e), \ e\in
        E(\Delta),$
    \end{itemize}
    then there exists a unique continuous homomorphism $\delta : G\longrightarrow  H$ with $\delta(t_e)= s_e$ $(e\in E(\Delta))$ such that for each $m\in\Delta$ the diagram
    $$\xymatrix{&
    G  \ar[dd]^\delta   \\  \mathcal{G}(m)  \ar[ru]^{\nu_m}
    \ar[rd]_{\beta_m }\\ &H }$$
    commutes.
\end{definition}

It was proven in \cite{Melnikov} that this definition does not depend on the choice of the maximal subtree $D$, moreover the existence and uniqueness of this group is proven in \cite[Proposition 6.2.1 and Theorem 6.2.4]{RibesBook}.

One can construct the fundamental group of a graph of pro-$\mathcal{C}$ groups by iterating two operations, namely pro-$\mathcal{C}$ amalgamated products and pro-$\mathcal{C}$ HNN extensions, denoted by $G_1\amalg_H G_2$ and $HNN(G_1,H,f)$ respectively, and where $G_1$ and $G_2$ are pro-$\mathcal{C}$ groups, $H\leq G_1$, and $f:H\to H'\leq G_1$ is an isomorphism. Both of these constructions are defined by means of a universal property and can be obtained as a certain pro-$\mathcal{C}$ completion of the abstract amalgamated product and HNN extension of the corresponding groups. We refer to Sections 9.2 and 9.4 of \cite{RZBook} for the precise definitions and basic properties.\\
It is important to remark that, contrary to the abstract case, the factors $G_1$ and $G_2$ (resp. the base group $G_1$) do not necessarily embed into $G_1\amalg_H G_2$ (resp. $HNN(G_1, H,f)$). Whenever they embed, the amalgamated product (resp. HNN extension) is said to be \emph{proper}. 
Some necessary and sufficient conditions for pro-$\mathcal{C}$ amalgamated products and HNN extensions to be proper were described in \cite[Theorem 9.2.4 and Proposition 9.4.3]{RZBook}. We remark that properness is assured if the amalgamated subgroup $H$ is a virtual retract of $G_1$ and $G_2$ (as $G_1$ and $G_2$ would induce the full pro-$\mathcal{C}$ topology on $H$ and the hypothesis of Thm 9.2.4 in \cite{RZBook} hold in this case).\\
Abstract Bass-Serre theory relates fundamental groups of graphs of groups with groups acting on trees. Such a relation is true for the pro-$\C$ case assuming that the action on a pro-$\C$ tree is cofinite and not true in general.
Namely given a fundamental group of a graph of pro-$\C$ groups $(\mathcal{G}, \Delta)$, there is a natural pro-$\C$ tree $T$ on which it acts. The construction of this tree, called the \emph{standard pro-$\mathcal{C}$ tree}, is described in Chapter 6 of \cite{RibesBook}.  The converse is true for the cofinite action.

If the fundamental group of the graph of pro-$\mathcal{C}$ groups is a pro-$\mathcal{C}$ amalgamated product $G=G_1\amalg_H G_2$ or a pro-$\mathcal{C}$ HNN extension $G=HNN(G_1,H,f)$, then each vertex stabiliser $G_v$ of a vertex $v$ is a conjugate of $G_1$ or $G_2$ (or of $G_1$ if $G=HNN(G_1,H,f)$) and each edge stabiliser $G_e$ is a conjugate of $H$.

Abstract Bass-Serre theory is extremely useful for studying the structure of subgroups of fundamental groups of graphs of groups. The same is true for the pro-$\mathcal{C}$ version of Bass-Serre theory, and the main tool is Theorem 7.1.7 of \cite{RibesBook}. We state the applications of these results to the case when the group acting on the pro-$\mathcal{C}$ tree is a pro-$\mathcal{C}$ amalgamated product or HNN extension. As usual, we denote by $\widehat\Z_\mathcal{C} =\prod_{p\in \pi(\mathcal{C})}\mathbb{Z}_p$ the pro-$\mathcal{C}$ completion of $\mathbb{Z}$ for any set of primes $\pi(\mathcal{C})$.

\begin{theorem}[Theorem 4.7 of \cite{NewHorizons}] \label{thm: RZ structure amalgam}
Let $K$ be a subgroup of a proper free amalgamated pro-$\mathcal{C}$ product $G=G_1\amalg_H G_2$ of pro-$\mathcal{C}$ groups. Then one of the following holds:
\begin{enumerate}
    \item $K\leq gG_ig^{-1}$ for $g\in G$ and $i\in \{1,2\}$;
    \item $K$ has a non-abelian free pro-$p$ subgroup $P$ for a certain $p\in \pi(\mathcal{C})$ such that $P\cap gG_ig^{-1}=1$ for all $g\in G$ and $i\in \{1,2\}$;
    \item there exists a subgroup $H_0 \trianglelefteq K$ (which is the kernel of the action of $K$ on $T_K$) that is contained in a conjugate of $H$ and such that $K/H_0$ is solvable and isomorphic to a projective group $\mathbb{Z}_\sigma \rtimes \mathbb{Z}_\rho$ ($\sigma, \rho \subseteq \pi(\mathcal{C})$ with $\sigma \cap \rho = \varnothing$) or $\mathbb{Z}_\sigma\rtimes C_n$ (with $\sigma \subseteq \pi(\mathcal{C})$ and $C_n$ a finite cyclic group). In the last case, it can be a profinite Frobenius group or, if $C_n=C_2$ and $2\in \sigma$, an infinite dihedral pro-$\sigma$ group.
\end{enumerate}
\end{theorem}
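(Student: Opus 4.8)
The plan is to let $K$ act by restriction on the standard pro-$\mathcal{C}$ tree $T$ of the proper amalgam $G=G_1\amalg_H G_2$, whose vertex stabilisers are exactly the conjugates of $G_1$ and $G_2$ and whose edge stabilisers are exactly the conjugates of $H$, and then to split into cases according to this action. If $K$ fixes a vertex of $T$, then $K$ is contained in the corresponding vertex stabiliser, which is a conjugate of some $G_i$, and we are in case~(1). So assume from now on that $K$ fixes no vertex of $T$.

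By Lemma~\ref{lem: existence minimal subtrees} there is a minimal $K$-invariant pro-$\mathcal{C}$ subtree $T_K\subseteq T$; since $K$ is not elliptic, $T_K$ has more than one vertex, hence is unique and contains at least one edge $e$. Set $H_0=\ker(K\to\operatorname{Aut}(T_K))$. This is a closed normal subgroup of $K$ that fixes $T_K$ pointwise, in particular it fixes $e$, so $H_0$ lies in the $G$-stabiliser of $e$, which is a conjugate of $H$. Writing $\overline{K}=K/H_0$, the quotient $\overline{K}$ acts faithfully on $T_K$ and, by minimality, irreducibly. Analysing this faithful irreducible action is where the structure theory of pro-$\mathcal{C}$ groups acting on pro-$\mathcal{C}$ trees, i.e.\ Theorem~7.1.7 of \cite{RibesBook}, comes in.

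If this structure theory places $\overline{K}$ in the ``large'' case and supplies a non-abelian free pro-$p$ subgroup $\overline{P}\le\overline{K}$ acting freely on $T_K$, for some $p\in\pi(\mathcal{C})$, then case~(2) follows. Indeed, $\overline{P}$ is free pro-$p$, hence projective, so the extension $1\to H_0\to\widetilde{P}\to\overline{P}\to 1$, with $\widetilde{P}$ the preimage of $\overline{P}$ in $K$, splits and produces a subgroup $P\le K$ with $P\cong\overline{P}$ non-abelian free pro-$p$ and $P\cap H_0=1$. Any $1\ne x\in P$ maps onto a non-trivial, hence hyperbolic, element of $\overline{P}$; if $x$ were elliptic on $T$, then $x$ would fix a vertex of the $\langle x\rangle$-invariant subtree $T_K$ by \cite[Corollary~4.1.9]{RibesBook}, a contradiction. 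Hence every non-trivial element of $P$ is hyperbolic on $T$, so it lies in no vertex stabiliser, i.e.\ $P\cap gG_ig^{-1}=1$ for all $g\in G$ and $i\in\{1,2\}$. In the complementary ``small'' case the structure theory identifies $\overline{K}$ with one of the projective solvable groups $\mathbb{Z}_\sigma\rtimes\mathbb{Z}_\rho$ (with $\sigma\cap\rho=\varnothing$), $\mathbb{Z}_\sigma\rtimes C_n$, a profinite Frobenius group, or an infinite dihedral pro-$\sigma$ group; together with $H_0$ being contained in a conjugate of $H$, this is exactly case~(3).

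I expect the main obstacle to be the dichotomy used in the last paragraph: that a pro-$\mathcal{C}$ group acting faithfully and irreducibly on a pro-$\mathcal{C}$ tree either contains a non-abelian free pro-$p$ subgroup, which moreover can be taken to act freely, or is one of the short list of (projective, solvable) groups above. Proving this requires controlling the quotient graph of an irreducible faithful action and the possible configurations of its edge and vertex stabilisers; it is the heart of Theorem~7.1.7 of \cite{RibesBook} and the accompanying classification of the admissible ``small'' groups, which I take as an input here.
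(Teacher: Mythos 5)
This statement is not proved in the paper at all: it is imported verbatim as Theorem 4.7 of \cite{NewHorizons}, so there is no in-paper argument to compare yours against. Judged on its own terms, your outline is the standard derivation and its routine steps are sound: passing to the minimal $K$-invariant subtree $T_K$, identifying $H_0$ as the kernel of the action (hence contained in an edge stabiliser, i.e.\ a conjugate of $H$), lifting a free pro-$p$ subgroup of $\overline K=K/H_0$ back to $K$ via projectivity, and upgrading ``acts freely on $T_K$'' to ``meets no vertex stabiliser of $T$'' via \cite[Corollary 4.1.9]{RibesBook} are all correct. The caveat, which you state yourself, is that the entire substance of the theorem --- the dichotomy that a faithful irreducible action either yields a non-abelian free pro-$p$ subgroup acting freely or forces $\overline K$ onto the short list $\mathbb{Z}_\sigma\rtimes\mathbb{Z}_\rho$ or $\mathbb{Z}_\sigma\rtimes C_n$ (with the Frobenius/infinite-dihedral refinements being sub-cases of the latter, not separate alternatives as your phrasing suggests) --- is taken as a black box from Theorem 7.1.7 of \cite{RibesBook}. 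So what you have is a correct reduction of the stated theorem to that structure theorem, not an independent proof; since the paper itself treats the result as a citation, this is an acceptable level of detail, but you should be explicit that nothing in your argument establishes the dichotomy itself.
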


\begin{theorem}[Theorem 4.8 of \cite{NewHorizons}] \label{thm: RZ structure HNN}
Let $K$ be a subgroup of a proper pro-$\mathcal{C}$ HNN extension $G=HNN(G_1,H,f)$. Then one of the following holds:
\begin{enumerate}
    \item $K\leq gG_1g^{-1}$ for $g\in G$;
    \item $K$ has a non-abelian free pro-$p$ subgroup $P$ for $p\in \pi(\mathcal{C})$ such that $P\cap gG_1g^{-1}=1$ for all $g\in G$;
    \item there exists a subgroup $H_0 \trianglelefteq K$ (which is the kernel of the action of $K$ on $T_K$) that is contained in a conjugate of $H$ and such that $K/H_0$ is solvable and isomorphic to a projective group $\mathbb{Z}_\sigma \rtimes \mathbb{Z}_\rho$ ($\sigma, \rho \subseteq \pi(\mathcal{C})$ with $\sigma \cap \rho = \varnothing$) or $\mathbb{Z}_\sigma\rtimes C_n$ (with $\sigma \subseteq \pi(\mathcal{C})$ and $C_n$ a finite cyclic group). In the last case, it can be a profinite Frobenius group or, if $C_n=C_2$ and $2\in \sigma$, an infinite dihedral pro-$\sigma$ group.
\end{enumerate}
\end{theorem}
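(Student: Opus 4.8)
The plan is to mirror the proof of Theorem~\ref{thm: RZ structure amalgam}, with the single vertex group $G_1$ now playing the role of both factors. First I would let $T$ be the standard pro-$\mathcal{C}$ tree of the HNN extension $G=HNN(G_1,H,f)$; since the extension is proper, $G$ acts on $T$ with every vertex stabiliser a conjugate of $G_1$ and every edge stabiliser a conjugate of $H$. Restricting the action to $K$ and invoking Lemma~\ref{lem: existence minimal subtrees}, I would pass to a minimal $K$-invariant pro-$\mathcal{C}$ subtree $T_K$. If $T_K$ is a single vertex $v$, then $K$ fixes $v$, so $K\leq G_v=gG_1g^{-1}$ for some $g\in G$, and we are in case (1).

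The substantial case is when $T_K$ has more than one vertex; then, by minimality, $K$ acts irreducibly on $T_K$, and $T_K$ contains an edge $e$. Let $H_0\trianglelefteq K$ be the kernel of this action; since $H_0$ fixes $T_K$ pointwise it fixes $e$, hence $H_0$ lies in the edge stabiliser $G_e$, a conjugate of $H$ — this is the normal subgroup demanded in case (3). Now $\bar K:=K/H_0$ acts faithfully and irreducibly on $T_K$, and the core of the argument is the classification of such actions (where Theorem~7.1.7 of \cite{RibesBook} and the analysis already used in the amalgam case enter): either $\bar K$ is solvable of one of the listed types ($\mathbb{Z}_\sigma\rtimes\mathbb{Z}_\rho$ with $\sigma\cap\rho=\varnothing$, $\mathbb{Z}_\sigma\rtimes C_n$, a profinite Frobenius group, or an infinite pro-$\sigma$ dihedral group), giving case (3); or $\bar K$ contains a non-abelian free pro-$p$ subgroup $\bar P$ for some $p\in\pi(\mathcal{C})$, which may moreover be chosen so that every non-trivial element of $\bar P$ acts hyperbolically on $T_K$. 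In the latter situation I would lift a topological basis of $\bar P$ to $K$ and let $P$ be the closed subgroup they generate; since the natural map to $P$ from a free pro-$p$ group $F$ on the corresponding set factors through the identification $F\cong\bar P$, it is injective, so $P$ is free pro-$p$ and non-abelian. If some non-trivial $x\in P$ fixed a vertex of $T$, then, as in the amalgam case (using nearest-point projection onto the $K$-invariant subtree $T_K$), $x$ would fix a vertex of $T_K$, hence so would its non-trivial image $\bar x\in\bar P$, contradicting hyperbolicity; therefore every non-trivial element of $P$ is hyperbolic on $T$ and $P\cap gG_1g^{-1}=1$ for every $g\in G$, which is case (2).

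I expect the main obstacle to be the structural dichotomy for faithful irreducible actions on pro-$\mathcal{C}$ trees — in particular, identifying the short list of solvable quotients in the ``small'' case, with its Frobenius and dihedral phenomena (the profinite analogues of actions stabilising an end or a pair of ends). The reduction to that dichotomy, and the passage back and forth between $T$ and $T_K$, are routine once one has Lemma~\ref{lem: existence minimal subtrees} and the stabiliser description of the standard tree, so that the argument differs from the amalgamated-product case only in that all vertex groups are conjugates of the single group $G_1$.
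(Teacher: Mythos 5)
This statement is not proved in the paper at all: it is quoted verbatim as Theorem 4.8 of \cite{NewHorizons}, an external result of pro-$\mathcal{C}$ Bass--Serre theory, so there is no in-paper argument to compare yours against. Your outline is a faithful reconstruction of the standard proof of that cited theorem: pass to the standard tree, split off the elliptic case, take the minimal $K$-invariant subtree and the kernel $H_0$ of the action (which lies in an edge stabiliser, hence a conjugate of $H$), and then invoke the dichotomy for faithful irreducible actions on pro-$\mathcal{C}$ trees. The one place where your write-up is essentially a pointer rather than a proof is exactly that dichotomy --- the classification of the ``small'' solvable quotients ($\mathbb{Z}_\sigma\rtimes\mathbb{Z}_\rho$, Frobenius, dihedral) versus the existence of a free pro-$p$ subgroup acting freely --- which is the real content of the theorem and is what the paper is importing wholesale; you correctly flag this as the main obstacle, and the surrounding reductions (lifting the basis of $\bar P$ using $P\cap H_0=1$, and projecting fixed vertices of $T$ into $T_K$ to get $P\cap gG_1g^{-1}=1$) are sound.
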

A useful remark is that, in the third case of the previous theorems, $H/H_0$ is torsion-free if and only if it is isomorphic to $\mathbb{Z}_\sigma \rtimes \mathbb{Z}_\rho$. In this case, as this is a projective group, we have that $H\cong H_0 \rtimes (\mathbb{Z}_\sigma \rtimes \mathbb{Z}_\rho)$.

Finally, we record the following observation.

\begin{lemma}\label{lem: commutation of a path}
Let $G=G_1\amalg_H G_2$ be a proper amalgamated pro-$\mathcal{C}$ product of two pro-$\mathcal{C}$ groups $G_1$ and $G_2$ and let $T$ be the standard pro-$\mathcal{C}$ tree associated with this splitting. Let $g_1,\ldots,g_k$ be a sequence of elliptic elements such that $[g_i,g_{i+1}]=1$ for all $i\in\{1,\ldots, k-1\}$. Then there are some vertices $v_1,\ldots, v_k\in V(T)$ (not necessarily distinct) such that $g_1\in G_{v_1}$ and $g_i\in G_{t_i}$ for each $t_i\in [v_{i-1},v_i]$.
\end{lemma}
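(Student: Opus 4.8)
The plan is to induct on $k$, using Lemma~\ref{lem: culler vogtmann}(1) to control the minimal invariant subtrees and the case $k=3$ of Helly's Property (Lemma~\ref{lem: Helly}) together with \cite[Lemma 2.8]{NewHorizons} to splice consecutive fixed trees along geodesics. For $k=1$ the statement is immediate: $g_1$ is elliptic, so it fixes some vertex $v_1$. Suppose the claim holds for sequences of length $k-1$, and let $g_1,\dots,g_k$ be given. By induction applied to $g_1,\dots,g_{k-1}$ we obtain vertices $v_1,\dots,v_{k-1}$ with $g_1\in G_{v_1}$ and $g_i\in G_{t}$ for every $t\in[v_{i-2},v_{i-1}]$ (shifting indices as in the statement), where in particular $g_{k-1}$ fixes the geodesic $[v_{k-2},v_{k-1}]$ pointwise; without loss of generality we may take $v_{k-1}\in T^{g_{k-1}}$.

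Now I bring in $g_k$. Since $g_k$ is elliptic it fixes a non-empty pro-$\mathcal{C}$ subtree $T^{g_k}$, and since $[g_{k-1},g_k]=1$, Lemma~\ref{lem: culler vogtmann}(1) gives that $g_k$ leaves $T^{g_{k-1}}$ invariant (as $T_{g_{k-1}}\subseteq T^{g_{k-1}}$ — more precisely $g_k$ normalises $\langle g_{k-1}\rangle$, so it preserves the set of fixed points of $g_{k-1}$, which is exactly $T^{g_{k-1}}$). By \cite[Corollary 4.1.9]{RibesBook} (the same step used in the proof of Lemma~\ref{lem: culler vogtmann}(2)), $g_k$ fixes a vertex of $T^{g_{k-1}}$, so $T^{g_{k-1}}\cap T^{g_k}\neq\varnothing$; pick $v_k$ in this intersection. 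Then $v_{k-1}$ and $v_k$ both lie in $T^{g_{k-1}}$, which is a pro-$\mathcal{C}$ subtree, so the geodesic $[v_{k-1},v_k]$ is contained in $T^{g_{k-1}}$, i.e.\ $g_{k-1}$ fixes $[v_{k-1},v_k]$ pointwise. It remains to produce a point fixed by $g_k$ on the geodesic $[v_{k-1},v_k]$: but $v_k\in T^{g_k}$ by construction, so $g_k$ fixes the endpoint $v_k\in[v_{k-1},v_k]$, which is what the statement requires for the last index (the statement only asks that $g_i$ fix the points of a geodesic $[v_{i-1},v_i]$; it fixes all of $[v_{k-1},v_k]$ only if $v_{k-1}\in T^{g_k}$, which we do not claim).

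The one subtlety — and the step I expect to be the main obstacle — is bookkeeping: the statement as phrased requires $g_i\in G_t$ for every $t\in[v_{i-1},v_i]$, and this must be maintained simultaneously for all $i$ through the induction, so I need the inductively chosen $v_{k-1}$ to be compatible with both the constraint coming from $g_{k-1}$ fixing $[v_{k-2},v_{k-1}]$ and the new constraint that $v_{k-1}\in T^{g_{k-1}}\cap T^{g_k}$ is reachable. This is where the $k=3$ instance of Helly is genuinely used: applying it to the three subtrees $T^{g_{k-2}}$ (containing $v_{k-2}$, relevant only if $k\ge 3$), $T^{g_{k-1}}$, and $T^{g_k}$ — all pairwise intersecting by Lemma~\ref{lem: culler vogtmann}(2) applied to the commuting pairs — lets me choose $v_{k-1}$ in $T^{g_{k-1}}$ with $[v_{k-2},v_{k-1}]\subseteq T^{g_{k-1}}$ and $[v_{k-1},v_k]\subseteq T^{g_{k-1}}$ compatibly. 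Since each $g_i$ is elliptic and $T^{g_i}$ is a pro-$\mathcal{C}$ tree by \cite[Theorem 4.1.5]{RibesBook}, all the subtree/geodesic manipulations stay inside the pro-$\mathcal{C}$ category, so no extra hypotheses are needed. Assembling these choices for $i=1,\dots,k$ yields the desired vertices $v_1,\dots,v_k$ and completes the induction.
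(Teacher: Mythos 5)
There is a genuine gap, and it comes from a misreading of the conclusion. The statement requires $g_i\in G_{t_i}$ for \emph{every} $t_i\in[v_{i-1},v_i]$, i.e.\ $g_i$ must fix the geodesic $[v_{i-1},v_i]$ pointwise; in particular $g_k$ must fix all of $[v_{k-1},v_k]$, including $v_{k-1}$. Your argument only produces a $v_k$ fixed by $g_k$ and then explicitly disclaims showing $v_{k-1}\in T^{g_k}$, asserting that the statement does not require it — but it does. As it stands, the inductive step does not prove the lemma for the last index. Your attempted repair via Helly's property is also unsound: you apply Lemma \ref{lem: Helly} (the $k=3$ case) to $T^{g_{k-2}},T^{g_{k-1}},T^{g_k}$, claiming they pairwise intersect by Lemma \ref{lem: culler vogtmann}(2), but only \emph{consecutive} elements are assumed to commute, so $T^{g_{k-2}}\cap T^{g_k}$ need not be non-empty and Helly's hypothesis fails.

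The fix is to choose the vertices differently from the outset, which is what the paper does and which makes the induction (and Helly) unnecessary: for each $i\in\{1,\dots,k-1\}$ apply Lemma \ref{lem: culler vogtmann}(2) to the commuting pair $g_i,g_{i+1}$ to get a vertex $v_i$ fixed by \emph{both}, and let $v_k$ be any vertex fixed by $g_k$. Then each $g_i$ ($i\geq 2$) fixes the two endpoints $v_{i-1}$ and $v_i$, hence the whole geodesic $[v_{i-1},v_i]$ by \cite[Corollary 4.1.6]{RibesBook}. Your error is precisely that you inherit $v_{k-1}$ from the induction hypothesis, where it was chosen with no reference to $g_k$; had you re-chosen $v_{k-1}$ as a common fixed vertex of $g_{k-1}$ and $g_k$ (noting that $g_{k-1}$ still fixes $v_{k-2}$ and the new $v_{k-1}$, hence still fixes the new geodesic $[v_{k-2},v_{k-1}]$), the argument would close.
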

\begin{proof}
By Lemma \ref{lem: culler vogtmann} there exists a vertex $v_i$ stabilized by each pair of commuting elements $g_{i},g_{i+1}$ for each $i\in \{1,\ldots, k-1\}$. Define $v_k$ to be any vertex stabilized by $g_k$. In this setting, $g_i$ stabilizes both $v_{i-1}$ and $v_i$, hence it stabilizes the whole subtree $[v_{i-1},v_i]$ by \cite[Corollary 4.1.6]{RibesBook}.
\end{proof}

\section{Basic on pro-\texorpdfstring{$\C$}{C} RAAGs}

The aim of this section is to describe basic properties pro-$\C$ RAAGs. The abstract version of the definitions and results that we discuss can be found, for example, in \cite{Charney}.

Let $\Gamma=(V(\Gamma), E(\Gamma))$ be a finite simplicial graph where $V(\Gamma)$ and $E(\Gamma)$ are the set of vertices and edges respectively. A subgraph $\Delta < \Gamma$ is called $\emph{full}$ if for all $e\in \Gamma$ with $d_0(e),d_1(e)\in \Delta$ we have that $e\in \Delta$. Notice that full subgraphs are uniquely determined by the subset of vertices $V(\Delta)$ of $V(\Gamma)$.

\begin{definition}[Right-angled Artin pro-$\mathcal{C}$ groups]\label{defn: pro-C RAAG}
The \emph{right-angled Artin pro-$\mathcal{C}$ group} (pro-$\mathcal{C}$ RAAG for short) $G_\Gamma$ is the pro-$\mathcal{C}$ group given by the pro-$\mathcal{C}$ presentation
$$G_\Gamma=\langle V(\Gamma)|[u,v]=1 \mbox{ if and only if $u$ and $v$ are adjacent in $\Gamma$}\rangle.$$

\end{definition}

We recall some standard terminology.

\begin{definition}[Canonical Generators]
The generators associated with the vertices of $\Gamma$ are called \emph{canonical generators} and, abusing the notation, we denote them with the same letter as the corresponding vertex.
\end{definition}

\begin{definition}[Standard subgroups]\label{def: standard subgroup}
A subgroup of $G_\Gamma$ is called a \emph{standard subgroup} if it is the subgroup generated by a subset $V'\subseteq V(\Gamma)$. If $\Gamma = \varnothing$, by convention we set $G_\Gamma$ to be the trivial subgroup.
\end{definition}
Abusing the notation, if $S\subseteq V(\Gamma)$, we denote by $G_{S}$ the standard subgroup generated by the full subgraph generated by $S$.

\begin{lemma}[Properties of standard subgroups]\label{lem: Raags as completions}
Let $G_\Gamma$ be a pro-$\mathcal{C}$ RAAG. Then:
\begin{enumerate}
    \item $G_\Gamma$ is the pro-$\mathcal{C}$ completion of the abstract RAAG $G(\Gamma)$;
    \item the standard subgroup generated by a subset of vertices $V'\subseteq V(\Gamma)$ is the pro-$\mathcal{C}$ RAAG $G_\Delta$ generated by the full subgraph $\Delta\subseteq \Gamma$ determined by $V'$;
    \item the standard subgroups of $G_\Gamma$ are retracts;
    \item the intersection of standard subgroups is a standard subgroup (possibly trivial).
\end{enumerate}
\end{lemma}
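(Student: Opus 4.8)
The plan is to derive all four statements from a single source of structure, namely the retraction homomorphisms attached to full subgraphs, together with the right-exactness of pro-$\C$ completion. Statement (1) is really a matter of unwinding the definition of a pro-$\C$ presentation: writing $G(\Gamma)=F/N$ with $F$ the free group on the finite set $V(\Gamma)$ and $N$ the normal closure of $\{[u,v]\mid (u,v)\in E(\Gamma)\}$, I would apply the pro-$\C$ completion functor to $1\to N\to F\to G(\Gamma)\to 1$. Since that functor is right exact, this gives $G(\Gamma)_{\widehat{\C}}\cong F_{\widehat{\C}}/\overline{N}$, where $F_{\widehat{\C}}$ is the free pro-$\C$ group on $V(\Gamma)$ and $\overline{N}$ is the closed normal subgroup topologically generated by the image of $N$, i.e.\ by the same commutators $[u,v]$. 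By Definition~\ref{defn: pro-C RAAG} this quotient is precisely $G_\Gamma$, so $G_\Gamma$ is the pro-$\C$ completion of $G(\Gamma)$; the right-exactness statement is standard (see \cite{RZBook}).

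For (2) and (3), fix $V'\subseteq V(\Gamma)$ and let $\Delta$ be the full subgraph it spans. Because $\Delta$ is full, $E(\Delta)=\{e\in E(\Gamma)\mid d_0(e),d_1(e)\in V'\}$, so the inclusion $V'\hookrightarrow V(\Gamma)$ respects relators and extends to $\iota\colon G(\Delta)\to G(\Gamma)$; likewise the map sending $v\mapsto v$ for $v\in V'$ and $v\mapsto 1$ otherwise respects relators (if $(u,v)\in E(\Gamma)$ with one endpoint outside $V'$ the image commutator is trivial, and if both endpoints lie in $V'$ then $(u,v)\in E(\Delta)$) and extends to $r\colon G(\Gamma)\to G(\Delta)$ with $r\circ\iota=\operatorname{id}$. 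Completing and invoking (1) yields continuous homomorphisms $\widehat\iota\colon G_\Delta\to G_\Gamma$ and $\widehat r\colon G_\Gamma\to G_\Delta$ with $\widehat r\circ\widehat\iota=\operatorname{id}_{G_\Delta}$. Hence $\widehat\iota$ is injective; since $G_\Delta$ is compact and $G_\Gamma$ Hausdorff it is a topological embedding, and its image is a closed subgroup containing $V'$ that is topologically generated by $V'$, so it equals the standard subgroup $\overline{\langle V'\rangle}$. This identifies $\overline{\langle V'\rangle}$ with the pro-$\C$ RAAG $G_\Delta$, proving (2), and $\widehat\iota\circ\widehat r$ is then a continuous idempotent endomorphism of $G_\Gamma$ with image $\overline{\langle V'\rangle}$, i.e.\ a retraction, proving (3).

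For (4), $V(\Gamma)$ being finite there are only finitely many standard subgroups, so by induction it suffices to treat $G_{V_1}\cap G_{V_2}$. The inclusion $G_{V_1\cap V_2}\subseteq G_{V_1}\cap G_{V_2}$ is clear. For the converse, take the retraction $r_1\colon G_\Gamma\to G_{V_1}$ from (3): its restriction to $G_{V_2}$ sends $v\mapsto v$ for $v\in V_1\cap V_2$ and $v\mapsto 1$ for $v\in V_2\smallsetminus V_1$, so by the topological-generation argument of the previous paragraph its image lies in $\overline{\langle V_1\cap V_2\rangle}=G_{V_1\cap V_2}$ (here one uses that the full subgraph of $\Gamma$ on $V_1\cap V_2$ is also the full subgraph of $\Delta_2$ on $V_1\cap V_2$, so the various candidate standard subgroups on $V_1\cap V_2$ coincide). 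Thus for $x\in G_{V_1}\cap G_{V_2}$ one has $x=r_1(x)$ since $x\in G_{V_1}$, while $r_1(x)=r_1|_{G_{V_2}}(x)\in G_{V_1\cap V_2}$ since $x\in G_{V_2}$; hence $x\in G_{V_1\cap V_2}$. When $V_1\cap V_2=\varnothing$ this is the trivial subgroup, by the convention of Definition~\ref{def: standard subgroup}.

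The only genuinely delicate points are in (1), where one must know that passing to normal closures commutes with completion in the appropriate sense — exactly the role of right-exactness rather than mere functoriality — and the bookkeeping in (4) guaranteeing that the several a priori distinct ``standard subgroups on $V_1\cap V_2$'' (inside $G_\Gamma$, inside $G_{V_1}$, inside $G_{V_2}$, and the abstract $G_{V_1\cap V_2}$) are literally the same object, which reduces to transitivity of ``being a full subgraph''. The upgrade from continuous injection to topological embedding is routine for profinite groups and I would only mention it in passing.
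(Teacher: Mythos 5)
Your proposal is correct and follows essentially the same route as the paper: part (1) by unwinding the pro-$\mathcal{C}$ presentation, parts (2)--(3) by exploiting that the vertex-killing map splits the inclusion $G(\Delta)\hookrightarrow G(\Gamma)$ (which is exactly why the induced topology on $G(\Delta)$ is its full pro-$\mathcal{C}$ topology), and part (4) by composing the retractions $\mathrm{pr}_{\Delta}\circ\mathrm{pr}_{\Lambda}=\mathrm{pr}_{\Delta\cap\Lambda}$. Your extra care with right-exactness in (1) and with identifying the various candidate standard subgroups on $V_1\cap V_2$ in (4) is welcome but does not change the argument.
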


\begin{proof}
Given a group $G$, we denote by $\widehat{G}$ its pro-$\mathcal{C}$ completion.
\begin{enumerate}
    \item Follows from the pro-$\mathcal{C}$ presentation (see Definition \ref{defn: pro-C RAAG}).

\item In the abstract case, the subgroup of $G(\Gamma)$ generated by $V'$ is exactly $G(\Delta)$, see for example Corollary 2.11 of \cite{Koberda}. As this subgroup is a retract of $G(\Gamma)$, the pro-$\mathcal{C}$ topology of $G(\Gamma)$ induces on it the full pro-$\mathcal{C}$ topology, so the pro-$\mathcal{C}$ subgroup $\langle V'\rangle \leq G_\Gamma$ is $\widehat{G(\Delta)}$, that by (1) coincides with $ G_\Delta$.
\item The map $\mbox{pr}_{\Delta}:G_\Gamma \to G_\Delta$ whose restriction to $G_\Delta$ is the identity and such that $\mbox{pr}_{\Delta}(v)=1$ for each $v\in V(\Gamma)\smallsetminus V'$ is surjective. Since by (2) $G_\Delta$ is a subgroup of $G_\Gamma$, we have that $\mbox{pr}_\Delta$ is a retraction onto $G_\Delta$.
\item Consider two standard subgroups $G_\Delta, G_\Lambda$ of $G_\Gamma$. By (3), a non-trivial element $g$ of $G_\Gamma$ is in $G_\Delta\cap G_\Lambda$ if and only if $\mbox{pr}_{\Delta}(\mbox{pr}_{\Lambda}(g))=g$, but this composition of maps corresponds exactly to $\mbox{pr}_{\Delta\cap \Lambda}(g)$, and therefore $G_\Delta\cap G_\Lambda= G_{\Delta\cap \Lambda}$.
\end{enumerate}   
\end{proof}

It follows from the pro-$\C$ version of Theorem 9.2.4 in \cite{RZBook} that, if $H$ is a retract of two groups $G_1$ and $G_2$, then a pro-$\C$ $G_1\amalg_H G_2$ is a proper pro-$\mathcal{C}$ amalgamated product. Similarly, it follows from Theorem 9.4.3 that pro-$\C$ HNN-extension $HNN(G_1,H,f)$ is proper if $H$ is a retract of $G_1$. As standard subgroups of RAAGs are retracts, we deduce the following.

\begin{corollary}\label{cor: proper_retraction}
Let $G_\Gamma$ be a pro-$\mathcal{C}$ RAAG. If $G_\Gamma$ is a pro-$\mathcal{C}$ amalgamated product $G_1\amalg_H G_2$ or a pro-$\mathcal{C}$ HNN extension $HNN(G_1,H,f)$ with $G_1,G_2,H,f(H)$ standard subgroups of $G_\Gamma$, then the free product with amalgamation or HHN extension is proper.
\end{corollary}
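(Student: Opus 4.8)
The plan is to reduce the statement to the two properness criteria already recalled just before it — the pro-$\C$ versions of Theorem 9.2.4 and Theorem 9.4.3 of \cite{RZBook} — together with the fact, established in Lemma \ref{lem: Raags as completions}(3), that every standard subgroup of $G_\Gamma$ is a retract of $G_\Gamma$. So the whole point is to turn ``$H$ is a retract of the ambient group'' into ``$H$ is a retract of each factor/base group''.

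First I would record the following elementary observation: if $H\le G_1\le G_\Gamma$ and $H$ is a standard subgroup of $G_\Gamma$, then $H$ is a retract of $G_1$. Indeed, by Lemma \ref{lem: Raags as completions}(3) there is a continuous retraction $\mathrm{pr}\colon G_\Gamma\to H$; since $\mathrm{pr}$ restricts to the identity on $H$ and $H\le G_1$, we get $H=\mathrm{pr}(H)\le \mathrm{pr}(G_1)\le \mathrm{pr}(G_\Gamma)=H$, so $\mathrm{pr}|_{G_1}\colon G_1\to H$ is a continuous retraction. Note this uses only that $H$ is standard in $G_\Gamma$, not that $G_1$ is. In particular, in the amalgamated case $G_\Gamma=G_1\amalg_H G_2$ with $G_1,G_2,H$ standard, applying this to $H\le G_1$ and to $H\le G_2$ shows that $H$ is a retract of both $G_1$ and $G_2$; in the HNN case $G_\Gamma=HNN(G_1,H,f)$ with $G_1,H,f(H)$ standard, applying it to $H\le G_1$ (and, if the criterion requires it, to $f(H)\le G_1$) shows that $H$, respectively $f(H)$, is a retract of $G_1$.

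It then remains to invoke the cited criteria: by the pro-$\C$ version of Theorem 9.2.4 of \cite{RZBook} recalled above, a pro-$\C$ amalgamated product $G_1\amalg_H G_2$ in which $H$ is a retract of $G_1$ and of $G_2$ is proper; and by Theorem 9.4.3 of \cite{RZBook}, a pro-$\C$ HNN extension $HNN(G_1,H,f)$ in which $H$ is a retract of $G_1$ is proper. Combining with the previous paragraph gives the corollary in both cases.

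Since the substantive work is carried by Lemma \ref{lem: Raags as completions} and the quoted theorems, I do not expect a genuine obstacle here; the only point I would check carefully is that the precise hypotheses of the pro-$\C$ Theorems 9.2.4 and 9.4.3 of \cite{RZBook} are literally met — in particular, for the HNN extension, whether properness needs only that $H$ (and $f(H)$) be retracts of $G_1$ or also that the topologies they inherit from $G_1$ be compatible with the gluing isomorphism $f$. In the latter case one observes that $f$ is by hypothesis a continuous isomorphism between the standard subgroups $H$ and $f(H)$, each carrying the full pro-$\C$ topology induced from $G_1$ via the retraction, so the required compatibility holds automatically.
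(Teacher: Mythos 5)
Your proposal is correct and follows essentially the same route as the paper: the paper likewise deduces the corollary from the pro-$\C$ versions of Theorems 9.2.4 and 9.4.3 of \cite{RZBook} together with Lemma \ref{lem: Raags as completions}(3), the only (implicit) extra step being exactly your observation that the retraction $G_\Gamma\to H$ restricts to a retraction $G_1\to H$ when $H\le G_1$. Your write-up is, if anything, slightly more careful than the paper's on that restriction point and on the compatibility of $f$ with the induced topologies.
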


\begin{lemma}[Support is well-defined]\label{lem: minimal standard subgroups}
    Let $G_\Gamma$ be a pro-$\mathcal{C}$ RAAG and let $g\in G_\Gamma$. Then there exists a unique minimal standard subgroup containing $g$. Moreover, there exists an element $h$ in the conjugacy class of $g$ whose corresponding minimal standard subgroup is contained in each standard subgroup containing conjugates of $g$.
\end{lemma}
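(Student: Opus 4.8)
The plan is to mimic the abstract argument, where the support of a reduced word in a RAAG is well-defined, but replace the combinatorics of reduced words with the retraction structure established in Lemma~\ref{lem: Raags as completions}. For the first statement, let $g \in G_\Gamma$ and consider the family $\mathcal{S}$ of all standard subgroups containing $g$; this family is non-empty since $G_\Gamma \in \mathcal{S}$. I would show that $\mathcal{S}$ is closed under intersection. By part~(4) of Lemma~\ref{lem: Raags as completions}, the intersection of any two standard subgroups $G_\Delta, G_\Lambda$ is the standard subgroup $G_{\Delta \cap \Lambda}$, and if $g \in G_\Delta$ and $g \in G_\Lambda$ then $g \in G_{\Delta \cap \Lambda}$ (this is immediate from the characterisation in the proof of (4): $g \in G_\Delta \cap G_\Lambda$ iff $\mathrm{pr}_{\Delta \cap \Lambda}(g) = g$). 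Since $\Gamma$ is a finite graph, there are only finitely many standard subgroups, so the intersection $\bigcap_{H \in \mathcal{S}} H$ is a finite intersection, lies in $\mathcal{S}$, and is by construction the unique minimal element. I will call this the \emph{support} and write $G_{\alpha(g)}$, with $\alpha(g) \subseteq V(\Gamma)$ the corresponding vertex set.

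**The conjugacy statement.** The harder part is the ``moreover'': finding $h$ conjugate to $g$ whose support $G_{\alpha(h)}$ embeds in \emph{every} standard subgroup containing \emph{any} conjugate of $g$. I would argue as follows. Among all conjugates of $g$, choose $h$ so that $|\alpha(h)|$ is minimal — possible since $V(\Gamma)$ is finite. The claim is then that for any $x \in G_\Gamma$ and any standard subgroup $G_\Delta$ with $x g x^{-1} \in G_\Delta$ (equivalently $x h x^{-1} \in G_\Delta$, after adjusting $x$), one has $\alpha(h) \subseteq \Delta$. Suppose not; I want to derive a conjugate of $g$ with strictly smaller support, contradicting minimality. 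The mechanism should be: if $xhx^{-1} \in G_\Delta$ but some vertex $v \in \alpha(h)$ lies outside $\Delta$, then writing the retraction $\mathrm{pr}_\Delta$ and analysing how $x$ acts, one can ``peel off'' a canonical generator. Concretely, in the abstract RAAG this is the statement that a cyclically reduced word, and its support, is determined up to conjugacy: if $g$ is cyclically reduced then $\alpha(g)$ is minimal and is contained in the support of every conjugate (see \cite{Charney, Koberda}). For the pro-$\mathcal{C}$ case I would transfer this via part~(1) of Lemma~\ref{lem: Raags as completions}: the abstract RAAG $G(\Gamma)$ sits densely in $G_\Gamma$, standard subgroups of $G_\Gamma$ are the closures of the abstract standard subgroups, and retractions are continuous, so the minimal-support element $h \in G(\Gamma)$ (a cyclically reduced representative) continues to work in $G_\Gamma$ — one checks that $\mathrm{pr}_\Delta(xhx^{-1}) = xhx^{-1}$ forces $\mathrm{pr}_\Delta(h') = h'$ for the abstract cyclically reduced form, hence $\alpha(h) \subseteq \Delta$.

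**Main obstacle.** The delicate point is that conjugation in the pro-$\mathcal{C}$ completion is by an element of $G_\Gamma$, not of the abstract group $G(\Gamma)$, so a pro-$\mathcal{C}$ conjugate of $g$ need not be a limit of abstract conjugates in a way that preserves the support bookkeeping. I expect the cleanest route is: first prove the statement purely abstractly for $G(\Gamma)$ (standard, via cyclically reduced words — if $h$ is cyclically reduced its support is contained in the support of every conjugate), then observe that the minimal-support conjugate $h$ of $g$ obtained abstractly also realises the minimum over all pro-$\mathcal{C}$ conjugates. For the latter, given $x \in G_\Gamma$ and $G_\Delta \ni xhx^{-1}$, apply $\mathrm{pr}_\Delta$; since $\mathrm{pr}_\Delta$ is a continuous retraction and $x g x^{-1}$ is a limit of abstract conjugates $x_n g x_n^{-1}$ with $x_n \in G(\Gamma)$, each of which has abstract support containing $\alpha(h)$ whenever it lies in $G_\Delta$, a limiting/openness argument (using that $G_\Delta$ is open-closed in the relevant quotients, or directly that $\alpha(h) \subseteq \Delta$ is detected by the finite-index quotient $G_\Gamma \to G_\Gamma / \langle\langle V(\Gamma) \setminus \Delta \rangle\rangle$ composed with abelianisation) forces $\alpha(h) \subseteq \Delta$. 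Making this limiting argument rigorous — ensuring the support of a pro-$\mathcal{C}$ conjugate is controlled by supports of nearby abstract conjugates — is where the real work lies; everything else is formal consequence of Lemma~\ref{lem: Raags as completions}.
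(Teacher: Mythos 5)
Your first paragraph is correct and matches the paper: the family of standard subgroups containing $g$ is closed under intersection by Lemma~\ref{lem: Raags as completions}(4), and finiteness of $\Gamma$ gives a unique minimal element. The problem is the ``moreover'' part, where your proposed route has a genuine gap. You plan to transfer the statement from the abstract RAAG via cyclically reduced words and a limiting argument, but this fails at the first step: the lemma is about an arbitrary $g\in G_\Gamma$, which need not lie in the dense abstract subgroup $G(\Gamma)$ and so has no cyclically reduced form to work with. Even restricting attention to $g\in G(\Gamma)$, the limiting argument you defer as ``the real work'' is not just technical: if $xgx^{-1}\in G_\Delta$ with $x\in G_\Gamma$ and $x_n\to x$ with $x_n\in G(\Gamma)$, the approximating conjugates $x_ngx_n^{-1}$ need not lie in $G_\Delta$ at all, so the abstract support-containment statement gives you nothing to pass to the limit; and there is no semicontinuity of supports that would rescue this.

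The fix is much simpler and stays entirely inside $G_\Gamma$ --- it is the ``peel off via $\mathrm{pr}_\Delta$'' idea you mention in passing but then abandon. Suppose $g\in G_{\Delta_1}$ and $g^t\in G_{\Delta_2}$ for some $t\in G_\Gamma$. Set $s=\mathrm{pr}_{\Delta_1}(t)$. Since $\mathrm{pr}_{\Delta_1}$ is a homomorphism fixing $g$, one computes $g^s=\mathrm{pr}_{\Delta_1}(g^t)$, and since $\mathrm{pr}_{\Delta_1}(G_{\Delta_2})\subseteq G_{\Delta_1\cap\Delta_2}$, we get $g^s\in G_{\Delta_1\cap\Delta_2}$. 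Hence the set of full subgraphs supporting some conjugate of $g$ is closed under intersection, and as this is a finite lattice it has a least element, realised by some conjugate $h$ of $g$; its support is then contained in every standard subgroup containing any conjugate of $g$. This one-line retraction computation replaces both the word combinatorics and the limiting argument, and is what the paper does.
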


\begin{proof}
    The unique minimal standard subgroup containing $g$ is the intersection of all the standard subgroups containing it, and this intersection is still a standard subgroup by Lemma \ref{lem: Raags as completions}.
    Suppose now that $\Delta_1, \Delta_2$ are full subgroups of $\Gamma$ such that $g\in G_{\Delta_1}$ and $g^t\in G_{\Delta_2}$ for $t\in G_\Gamma$. We claim that there exists $s\in G_\Gamma$ such that $g^s\in G_{\Delta_1 \cap \Delta_2}$. Indeed let $\mbox{pr}_{\Delta_1}$ be the retraction of $G_\Gamma$ to $G_{\Delta_1}$ and define $s=\mbox{pr}_{\Delta_1}(t)$. Then $g^s=\mbox{pr}_{\Delta_1}(g^t)\in G_{\Delta_1 \cap \Delta_2}$. In order to prove the lemma it suffices to apply this observation to the lattice of full subgraphs of $\Gamma$ containing a conjugate of $g$. Notice that if $g=1$ we have that $g\in G_\varnothing$ and by convention, the standard subgroup generated by the empty set is the trivial group.
\end{proof}

\begin{definition}[Support of an element]
Let $g$ be an element of a (pro-$\mathcal{C}$) RAAG $G_{\Gamma}$.
The \emph{support} $\alpha(g)$ of $g$ is the set of canonical generators of the unique minimal standard subgroup of $G_{\Gamma}$ containing $g$.

In view of Lemma \ref{lem: minimal standard subgroups}, in any conjugacy class there exists an element $g$ such that $\alpha(g)\subseteq \alpha(g^t)$ for each $t\in G$, in this case we say that $g^t$ is an \emph{element of minimal support among its conjugates}.
\end{definition}

\begin{definition}[Links and stars]
Let $g$ be an element of a (pro-$\mathcal{C}$) RAAG $G_{\Gamma}$.
The link $\link(g)$ of $g$ is the set of vertices of $\Gamma \smallsetminus \alpha(g)$ that are adjacent to each of the vertices in $\alpha(g)$.\\
If $v$ is a canonical generator, we denote by $\mbox{Star}(v)$ the full subgraph generated by $\link(v)\cup v$.
\end{definition}

\begin{remark} \label{rem: hyperbolic splitting}
If $v\in V(\Gamma)$, we can split $G_\Gamma$ as a pro-$\mathcal{C}$ HNN extension as
\begin{equation} \label{eq: splitting with a vertex}
    G_\Gamma=HNN(G_{\Gamma \smallsetminus \{v\}}, G_{\tiny{\link(v)}}, id)
\end{equation}
with stable letter $v$
and by Corollary \ref{cor: proper_retraction} this is a proper pro-$\mathcal{C}$ HNN extension.
It follows that if $g$ is an element with minimal support among its conjugates and $v\in \alpha(g)$, then Theorem \ref{thm: RZ structure HNN} guarantees that its action on the standard pro-$\mathcal{C}$ tree $T$ associated with this splitting is hyperbolic. 

\medskip
We can also split $G_\Gamma$ as a free pro-$\C$ product with amalgamation
\begin{equation} \label{eq: splitting amalg with a vertex}
    G_\Gamma=G_{\Gamma \smallsetminus \{v\}}\coprod_ {G_{\tiny{\link(v)}}} (G_{\tiny{\link(v)}}\times \langle v\rangle).
\end{equation} 
\end{remark}

We shall need the following

\begin{lemma}\label{normal closure} Let $G=G_{\Gamma}$ be a (pro-$\mathcal{C}$) RAAG and $v$ a vertex of $\Gamma$.  Let $y$ be an element of $G_{\tiny{\link(v)}}$. Then $\langle y\rangle\llangle v\rrangle=(F_0\times \langle y\rangle)\amalg F $, where $\llangle v\rrangle$ is the normal closure of $v$ in $G$  and $F_0, F$ are  free pro-$\C$. 

\end{lemma}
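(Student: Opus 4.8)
The plan is to realise $H:=\langle y\rangle\llangle v\rrangle$ as an explicit semidirect product $\overline{\langle y\rangle}\ltimes N$ with $N$ free pro-$\C$ carrying a completely transparent action, and then to split off the factor $F_0\times\overline{\langle y\rangle}$ by a pro-$\C$ tree argument.

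Write $A:=G_{\Gamma\smallsetminus\{v\}}$ and $C:=G_{\link(v)}$. First I would note that $N:=\llangle v\rrangle$ is precisely the kernel of the retraction $G\to A$ sending $v\mapsto 1$, so that $N\cap A=1$; since $\overline{\langle y\rangle}\le C\le A$ this gives $\overline{\langle y\rangle}\cap N=1$ and hence $H=\overline{\langle y\rangle}\ltimes N$. Next, $N$ is free pro-$\C$: using the proper HNN splitting $G=HNN(A,C,\mathrm{id})$ with stable letter $v$ from Remark~\ref{rem: hyperbolic splitting} and its standard pro-$\C$ tree $T$, the normal subgroup $N$ meets $A$ -- hence every conjugate of $A$ and of $C$ -- trivially, so $N$ acts freely on $T$; computing the quotient $N\backslash T$, which has one vertex and edge set $A/C$, exhibits a continuous free basis $\{x_\xi:\xi\in X\}$ of $N$, where $X:=A/C$ and $x_\xi:=ava^{-1}$ for $\xi=aC$ (well defined since $v$ centralises $C$), with $x_{eC}=v$.

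The crux is now that conjugation by $\overline{\langle y\rangle}$ permutes this basis along the left-multiplication action of $\overline{\langle y\rangle}\le A$ on $X$: since $y\in C\le A$, one gets $yx_\xi y^{-1}=x_{y\xi}$. The stabiliser of a point $\xi=aC$ in $\overline{\langle y\rangle}$ is the closed subgroup $\overline{\langle y\rangle}\cap aCa^{-1}$ of the procyclic group $\overline{\langle y\rangle}$, which, by isolatedness of standard subgroups of $G_\Gamma$ and of their conjugates, is either trivial or all of $\overline{\langle y\rangle}$. Thus $X=X_0\sqcup X_1$ with $X_0:=X^{\overline{\langle y\rangle}}$ the closed fixed-point set, which contains $eC$, and with $\overline{\langle y\rangle}$ acting freely on $X_1$. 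Put $F_0:=\overline{\langle x_\xi:\xi\in X_0\rangle}\le N$; this is free pro-$\C$ (it is the free pro-$\C$ group on the profinite space $X_0$), it contains $v$, and $\overline{\langle y\rangle}$ centralises it, so $S:=\overline{\langle y\rangle}\times F_0$ is a subgroup of $H$.

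To conclude I would let $H=\overline{\langle y\rangle}\ltimes N$ act on the standard pro-$\C$ tree $T_N$ of the free pro-$\C$ group $N$ relative to $\{x_\xi\}$: $N$ acts by left translation and every basis-permuting automorphism of $N$ extends to $T_N$, so $H$ acts. By the description of the action, the edges of $T_N$ labelled by $X_0$ are exactly those with non-trivial ($\overline{\langle y\rangle}$-conjugate) stabiliser, while those labelled by $X_1$ have trivial stabiliser. Collapsing the $H$-invariant subforest spanned by the $X_0$-edges gives a pro-$\C$ tree $\overline T$ on which $H$ acts with trivial edge stabilisers and a single orbit of vertices, the stabiliser of the vertex through the base point being exactly $S=\overline{\langle y\rangle}\times F_0$; the pro-$\C$ Bass--Serre structure theory (Chapters~6--7 of \cite{RibesBook}) then yields $H=S\amalg F$ with $F$ free pro-$\C$, which is the assertion (using $\langle y\rangle=\overline{\langle y\rangle}$). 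The point where I expect the real work to be is that $X_0$ need not be clopen in $X$, so that $\overline T$ is not a profinite graph in the naive sense; the last step must therefore be organised through the theory of free pro-$\C$ products of sheaves of groups over a profinite space, together with the corresponding structure theorems for actions on pro-$\C$ trees, or else through a careful passage to the limit over the finite continuous quotients $X\twoheadrightarrow X_i$ -- for which the fixed-point sets are clopen.
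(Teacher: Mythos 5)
Your strategy is in essence the paper's own: identify $\llangle v\rrangle$ as a free pro-$\C$ group whose continuous basis is the set of conjugates of $v$ indexed by the coset space of $G_{\tiny{\link(v)}}$ in $G_{\Gamma\smallsetminus\{v\}}$, observe that conjugation by $y$ permutes this basis via translation on the coset space, and split the basis into the $y$-fixed part (giving $F_0$, centralised by $y$) and a part permuted freely (giving the complementary free factor). That you extract the basis from the HNN splitting rather than from the amalgam \eqref{eq: splitting amalg with a vertex} is immaterial, although you should quote \cite[Theorem B]{Zal95} for the freeness of $\llangle v\rrangle$ and its explicit basis, as the paper does: a pro-$\C$ group acting freely on a pro-$\C$ tree is in general only projective, so ``computing the quotient graph'' does not by itself produce a free basis. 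The one substantive logical problem is your justification of the dichotomy ``$\overline{\langle y\rangle}\cap aCa^{-1}$ is trivial or everything'' by isolatedness of standard subgroups: in the paper that is Lemma \ref{lem: standard subgroups isolated}, whose proof \emph{uses} the present lemma, so as written the argument is circular. It is repairable, because the isolation you actually need is that of $G_{\tiny{\link(v)}}$ inside $G_{\Gamma\smallsetminus\{v\}}$, a RAAG on strictly fewer vertices, so the two statements can be proved by a simultaneous induction on $|V(\Gamma)|$; but this has to be said. (For what it is worth, the paper simply asserts that $\langle y\rangle$ acts freely off the fixed-point set, which is exactly the point you are, rightly, worrying about.)

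On the final step, your diagnosis that the non-clopenness of $X_0$ is where the work lies is correct, but the remedy is lighter than sheaves or an ad hoc limit argument: it is the pointed-space formalism for free pro-$\C$ groups over profinite spaces. The paper collapses the fixed-point set $S$ to a base point, writes $\llangle v\rrangle=F(S)\amalg F(R,*)$ with $(R,*)$ the pointed quotient, and then identifies $F(R,*)\rtimes\langle y\rangle$ with $F(X,*)\amalg\langle y\rangle$, where $(X,*)=(R,*)/\langle y\rangle$, by applying \cite[Theorem B]{Zal95} a second time: the normal closure of $F(X,*)$ in $F(X,*)\amalg\langle y\rangle$ is free on the $\langle y\rangle$-orbit of $X$, i.e. precisely $F(R,*)$. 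This purely algebraic identity replaces your collapse of the $X_0$-subforest and spares you from proving a structure theorem for the collapsed tree; I would recommend substituting it for the last paragraph of your argument.
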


\begin{proof} By the pro-$\C$ version of the subgroup theorem for normal subgroups applied to \eqref{eq: splitting amalg with a vertex} (see \cite[Theorem B]{Zal95}), using that $\llangle v \rrangle$ does not intersect any conjugate of $G_{\tiny{\mbox{Link}(v)}}$, we obtain that $\llangle v \rrangle$ is a free pro-$\C$ group $F$, with basis $\{v^{G_{\Gamma \smallsetminus \{v\}}}\}=\{v^{(G_{\tiny{\mbox{Link}(v)}}\backslash G_{\Gamma \smallsetminus \{v\}})}\}$ as $v$ is centralized by $G_{\tiny{\mbox{Link}(v)}}$. Note that $G_{\Gamma \smallsetminus \{v\}}$ acts on $G_{\tiny{\mbox{Link}(v)}}\backslash G_{\Gamma \smallsetminus \{v\}}$ by multiplication and hence does $\langle y\rangle$. Let $S\subseteq G_{\tiny{\mbox{Link}(v)}}\backslash G_{\Gamma \smallsetminus \{v\}}$ be the subset of fixed points for $y$ and put  $(R,*)=(G_{\tiny{\mbox{Link}(v)}}\backslash G_{\Gamma \smallsetminus \{v\}})/ S$ to be the pointed profinite space with   $S$ collapsed  as a distinguished point.  We can write  $\langle y\rangle\llangle v\rrangle$ as $\langle y\rangle\llangle v\rrangle=(F(S)\amalg F(R, *))\rtimes \langle y\rangle$. Let $(X,*)=(R,*)/\langle y\rangle$ and $F(X,*)$ a free pro-$\C$ group over the pointed profinite space $(X,*)$. Since $\langle y\rangle$ acts freely on the pointed profinite space $(R,*)$, the free pro-$\C$ product $F(X,*)\amalg \langle y \rangle=F(R, *)\rtimes \langle y\rangle$ as the normal closure of $F(X,*)$ in $F(X,*)\amalg \langle y \rangle$  is exactly  $  F(R, *)$ by \cite[Theorem B]{Zal95}. Thus $\langle y\rangle\llangle v\rrangle=(F(S)\amalg F(R, *))\rtimes \langle y\rangle=(F(S)\times  \langle y\rangle) \amalg F(X, *)$ as required.
\end{proof}

Abstract right-angled Artin groups are torsion-free, but the pro-$\C$ completion of torsion-free groups is not always torsion-free (even the profinite completion as shown in \cite{Lubotzky},\cite{ChatzidakisTorsion}). However, in the case of pro-$\mathcal{C}$ RAAGs this is true.

\begin{theorem}\label{prop: Raags torsionfree}
Pro-$\mathcal{C}$ RAAGs are torsion-free profinite groups.
\end{theorem}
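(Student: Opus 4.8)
The plan is to induct on the number of vertices of $\Gamma$, using the HNN splitting from Remark \ref{rem: hyperbolic splitting}. If $\Gamma$ is empty or has one vertex, $G_\Gamma$ is trivial or $\widehat{\mathbb Z}_{\mathcal C}$, which is torsion-free, so the base case holds. For the inductive step, pick a vertex $v\in V(\Gamma)$ and write $G_\Gamma=HNN(G_{\Gamma\smallsetminus\{v\}}, G_{\link(v)}, \mathrm{id})$ as in \eqref{eq: splitting with a vertex}; by Corollary \ref{cor: proper_retraction} this HNN extension is proper. By induction, $G_{\Gamma\smallsetminus\{v\}}$ is torsion-free, and $G_{\link(v)}$, being a standard subgroup of it, is torsion-free as well.

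Now let $g\in G_\Gamma$ be an element of finite order; I want to show $g=1$. Apply Theorem \ref{thm: RZ structure HNN} to the closed subgroup $K=\overline{\langle g\rangle}$, which is finite (either trivial or a finite cyclic $p$-group for some $p\in\pi(\mathcal C)$). Case (2) is impossible since a finite group has no non-abelian free pro-$p$ subgroup. In case (3) the quotient $K/H_0$ would be one of the listed projective/solvable groups; but $H_0$ is contained in a conjugate of $G_{\link(v)}$, hence torsion-free, and $K$ is finite, forcing $H_0=1$ and $K$ itself projective and torsion-free, hence $K=1$. The remaining possibility is case (1): $K$ — and therefore $g$ — lies in a conjugate $xG_{\Gamma\smallsetminus\{v\}}x^{-1}$ of the vertex group. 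Since $G_{\Gamma\smallsetminus\{v\}}$ is torsion-free by induction, so is its conjugate, and therefore $g=1$.

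The main point to get right is the verification that cases (2) and (3) of Theorem \ref{thm: RZ structure HNN} genuinely cannot occur for a finite group $K$: one must check that every group appearing in case (3) — a projective group $\mathbb Z_\sigma\rtimes\mathbb Z_\rho$, a group $\mathbb Z_\sigma\rtimes C_n$, a profinite Frobenius group, or an infinite dihedral pro-$\sigma$ group — either is infinite or, when finite, has non-trivial torsion that cannot be reconciled with $H_0\leq$ (a conjugate of) the torsion-free group $G_{\link(v)}$ and $K$ finite. The cleanest way to phrase this is: since $H_0$ is contained in a conjugate of $G_{\link(v)}$ and the latter is torsion-free, $H_0$ is torsion-free; but $H_0\trianglelefteq K$ with $K$ finite forces $H_0=1$, whence $K\cong K/H_0$ is a finite projective pro-$\mathcal C$ group, and finite projective (equivalently, finite groups of order coprime to the "Sylow obstructions", i.e. with no non-trivial finite quotient — in fact finite projective pro-$\mathcal C$ groups are trivial) must be trivial. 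Once this reduction to case (1) is secured, the induction closes immediately.
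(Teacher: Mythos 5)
Your proof is correct in outline but follows a genuinely different route from the paper. The paper does not induct at all: it invokes the Duchamp--Krob theorem that abstract RAAGs are residually (finitely generated torsion-free nilpotent), embeds the pro-$\mathcal{C}$ completion into a product of pro-$\mathcal{C}$ completions of such nilpotent groups, and quotes the fact that the profinite completion of a finitely generated torsion-free nilpotent group is torsion-free. That argument is short but leans on an external structural fact about abstract RAAGs; yours is internal to the paper's profinite Bass--Serre toolkit and runs parallel to the inductions used later for Lemma \ref{lem: standard subgroups isolated} and Theorem \ref{thm: solvable subgroups are metabelian}, which is a legitimate trade-off (and introduces no circularity, since Theorem \ref{thm: RZ structure HNN}, Corollary \ref{cor: proper_retraction} and Lemma \ref{lem: Raags as completions} do not depend on torsion-freeness).

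One point needs tightening. In eliminating case (3) you conclude that $H_0=1$ (correct: $H_0$ is a subgroup of the finite group $K$ and of a conjugate of the torsion-free group $G_{\link(v)}$), but you then assert that $K\cong K/H_0$ is ``projective and torsion-free.'' That only covers the alternative $\mathbb{Z}_\sigma\rtimes\mathbb{Z}_\rho$; the Frobenius groups $\mathbb{Z}_\sigma\rtimes C_n$ and the infinite dihedral pro-$\sigma$ group are neither projective nor torsion-free, so ``finite projective $\Rightarrow$ trivial'' does not dispose of them. The uniform observation you want is that every group listed in case (3) is either trivial or infinite: $\mathbb{Z}_\sigma\rtimes\mathbb{Z}_\rho$ is infinite unless $\sigma=\rho=\varnothing$, a profinite Frobenius group has non-trivial kernel $\mathbb{Z}_\sigma$ and is therefore infinite, and the infinite dihedral group is infinite by definition. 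Since $K$ is finite, case (3) forces $K=1$. With that sentence substituted, the reduction to case (1) is secure and the induction closes as you say.
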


\begin{proof}
A pro-$\mathcal{C}$ RAAG is the pro-$\mathcal{C}$ completion of the corresponding (abstract) RAAG. In \cite{DuchampKrob}, the authors proved that abstract RAAGs are residually-(finitely generated torsion-free nilpotent), and hence the pro-$\C$ completion of a RAAG embeds in a direct product of the pro-$\mathcal{C}$ completions of finitely generated torsion-free nilpotent groups. By \cite[Theorem 4.7.10]{RZBook}, the profinite completion $\widehat N$ of a finitely generated torsion-free nilpotent group $N$ is torsion-free. But $\widehat N=\prod_p \widehat N_p$ is the direct product of the pro-$p$ completions and the pro-$\C$ completion of $N$ is the direct product $\prod_{p\in \pi(\C)} N_p$. Hence the pro-$\C$ completion of $N$ is torsion-free.
\end{proof}

We recall that a subgroup $H\leq G$ is \emph{isolated} (or isolated in $G$) if whenever $g^k\in H$ for a certain $g\in G$, then $g\in H$.

\begin{lemma}\label{lem: standard subgroups isolated}
Standard subgroups of pro-$\C$ RAAGs are isolated.
\end{lemma}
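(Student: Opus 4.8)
The plan is to reduce the statement to the abstract case by using that pro-$\C$ RAAGs are pro-$\C$ completions of abstract RAAGs (Lemma \ref{lem: Raags as completions}(1)) and that standard subgroups induce the full pro-$\C$ topology (being retracts). Concretely, let $G_\Delta$ be a standard subgroup of $G_\Gamma$ and suppose $g^k \in G_\Delta$ for some $g \in G_\Gamma$ and $k \geq 1$; we want $g \in G_\Delta$. First I would recall that in the abstract RAAG $G(\Gamma)$, standard subgroups are isolated — this is a classical fact (for instance, it follows from the solution to the conjugacy/root problem in RAAGs, or from the fact that $G(\Gamma)$ is biorderable, or directly from the normal form theory in \cite{Charney, Koberda}). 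So the result is really a statement about transporting isolatedness through pro-$\C$ completion.

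The key tool is the retraction $\mathrm{pr}_\Delta : G_\Gamma \to G_\Delta$ from Lemma \ref{lem: Raags as completions}(3). Here is the heart of the argument: it suffices to show that $g$ is in the kernel-complement sense forced into $G_\Delta$. Consider $\mathrm{pr}_\Delta(g)$; since $\mathrm{pr}_\Delta$ is a retraction, $\mathrm{pr}_\Delta(g^k) = g^k$, so $\mathrm{pr}_\Delta(g)^k = g^k$. Set $h = g \cdot \mathrm{pr}_\Delta(g)^{-1} \in \ker(\mathrm{pr}_\Delta)$; the problem is that $g$ and $\mathrm{pr}_\Delta(g)$ need not commute, so I cannot immediately conclude $h = 1$. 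Instead I would work inside the closed subgroup $\overline{\langle g \rangle}$, which is procyclic (a quotient of $\widehat{\Z}_\C$), hence abelian; since $g^k \in \overline{\langle g\rangle} \cap G_\Delta$, and $\overline{\langle g \rangle} \cap G_\Delta$ is a closed subgroup of $\overline{\langle g\rangle}$ containing $g^k$. If I can show $g \in \overline{\langle g^k \rangle} \cdot (\text{something in } G_\Delta)$ this would finish it; in a torsion-free procyclic pro-$\C$ group $\overline{\langle g\rangle} \cong \widehat{\Z}_S$ for some set of primes $S$, and the closed subgroup generated by $g^k$ has index dividing $k^{|S|}$-type data, so $g$ itself need not lie in $\overline{\langle g^k\rangle}$.

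The cleaner route, which I would actually pursue, is the following. By Theorem \ref{prop: Raags torsionfree}, $G_\Gamma$ is torsion-free. Using residual-(torsion-free nilpotent)-ness of the abstract RAAG (as in the proof of Theorem \ref{prop: Raags torsionfree}), embed $G_\Gamma \hookrightarrow \prod_{i} \widehat{N_i}$ where each $N_i$ is finitely generated torsion-free nilpotent and the $N_i$ arise as quotients of $G(\Gamma)$; moreover the retraction $\mathrm{pr}_\Delta$ is compatible with this structure. The standard subgroup $G_\Delta$ maps to the product of the images of $G(\Delta)$. Now in a torsion-free nilpotent pro-$\C$ group, the set of $k$-th powers behaves well and isolatedness of subgroups that are themselves "coordinate-like" can be checked. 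Alternatively, and perhaps most efficiently: $g^k \in G_\Delta$ means, in the abstract RAAG viewed densely inside, that $\mathrm{pr}_\Delta$ fixes $g^k$; consider the element $\mathrm{pr}_\Delta(g)$ and note $\mathrm{pr}_\Delta(g)$ and $g$ both lie in $\overline{\langle g, \mathrm{pr}_\Delta(g)\rangle}$, and I claim this is abelian — indeed $\mathrm{pr}_\Delta(g)$ is a "syllable-restriction" of $g$ and in abstract RAAGs an element commutes with the restriction of itself to a full subgraph (this is a standard support/commutation fact), and commutation is a closed condition preserved in the completion. Granting $[g, \mathrm{pr}_\Delta(g)] = 1$, the element $h = g\,\mathrm{pr}_\Delta(g)^{-1}$ satisfies $h^k = g^k \mathrm{pr}_\Delta(g)^{-k} = 1$, so $h = 1$ by torsion-freeness, giving $g = \mathrm{pr}_\Delta(g) \in G_\Delta$.

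The main obstacle is justifying the commutation claim $[g, \mathrm{pr}_\Delta(g)] = 1$ cleanly in the pro-$\C$ setting: in the abstract RAAG this is the statement that an element commutes with its "projection to a standard subgroup", which needs a small argument via normal forms (the non-commuting syllables of $g$ are the ones killed by $\mathrm{pr}_\Delta$, and they are disjoint in support from the surviving ones only if $\Delta$ is chosen as a full subgraph and... in fact this is false in general — e.g. $g = ab$ with $a \in V(\Delta)$, $b \notin V(\Delta)$, $a,b$ non-adjacent, then $\mathrm{pr}_\Delta(g) = a$ which does commute with $ab$; but $g = aba^{-1}$ with the same setup gives $\mathrm{pr}_\Delta(g) = 1$, fine). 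So I would instead prove isolatedness in the abstract RAAG by a direct standard argument (biorderability of RAAGs: a biorderable group is torsion-free and, more relevantly, if $g^k \in H$ for $H$ a retract then... still not automatic). The genuinely safe path is: cite that abstract RAAGs have isolated standard subgroups (well-known, via the retraction plus the fact that in the abstract setting one shows $g^k \in G(\Delta) \Rightarrow g \in G(\Delta)$ using that $\mathrm{pr}_\Delta(g)$ and $g$ have equal image in the abelianization restricted appropriately and an induction on syllable length), and then transport: given $g^k \in G_\Delta$ in the pro-$\C$ group, approximate — but approximation of a single element by abstract elements does not preserve "$g^k \in G_\Delta$" exactly. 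Hence the honest argument must be the commutation one or a reduction to abelian/nilpotent quotients. I would write it via the nilpotent approximation: it suffices to check $g \in G_\Delta$ after mapping to each $\widehat{N_i}$, i.e. to prove isolatedness of images of standard subgroups in pro-$\C$ completions of f.g. torsion-free nilpotent groups, and there isolatedness of isolated subgroups is standard (the isolator of a subgroup in a f.g. torsion-free nilpotent group is well-behaved and passes to the completion by \cite[Theorem 4.7.10]{RZBook}-type results). I expect the write-up to be short once the right reduction is fixed; the subtlety to get right is exactly which reduction makes "$g^k \in G_\Delta \Rightarrow g \in G_\Delta$" a closed/local condition.
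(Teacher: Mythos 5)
Your proposal does not arrive at a proof: you explore three routes and, as you yourself concede at the end, none of them is closed. The concrete problems are these. (i) The commutation claim $[g,\mathrm{pr}_\Delta(g)]=1$ is simply false: take $\Gamma$ with two non-adjacent vertices $a,b$, let $\Delta=\{a\}$ and $g=ba$; then $\mathrm{pr}_\Delta(g)=a$ and $[ba,a]\neq 1$. So the step ``$h=g\,\mathrm{pr}_\Delta(g)^{-1}$ satisfies $h^k=1$'' collapses. What \emph{is} true from the retraction is only $\mathrm{pr}_\Delta(g)^k=g^k$; to conclude $g=\mathrm{pr}_\Delta(g)$ you would need uniqueness of $k$-th roots in $G_\Gamma$, which is a statement of essentially the same depth as the lemma and is not inherited from the abstract RAAG by any argument you give. (ii) The nilpotent-approximation route has a genuine gap: embedding $G_\Gamma\hookrightarrow\prod_i\widehat{N_i}$ does not reduce the problem to the coordinates, because membership in $G_\Delta$ is not detected coordinate-wise unless you prove $G_\Delta=G_\Gamma\cap\prod_i(\text{image of }G_\Delta)$, i.e.\ that $G_\Delta$ is the full preimage of the product of its images --- and this is exactly the kind of closure statement you are trying to establish. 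You also do not verify that the images of $G(\Delta)$ are isolated in the $\widehat{N_i}$. (iii) The direct ``approximate by abstract elements'' route is correctly identified by you as not working. Also note that the paper proves isolation for exponents $k\in\widehat{\Z}_\C$, not just $k\in\N$.

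For comparison, the paper's proof is of a quite different nature: it reformulates the claim as $\alpha(g)=\alpha(g^k)$, picks $v\in\alpha(g)\smallsetminus\alpha(g^k)$, and runs an induction on $|V(\Gamma)|$ using the action on the standard pro-$\C$ tree of the HNN splitting $G_\Gamma=HNN(G_{\Gamma\smallsetminus\{v\}},G_{\tiny{\link(v)}},id)$. After showing both $g$ and $g^k$ act elliptically and conjugating so that $x=f(g)\in G_{\tiny{\link(v)}}$, it places $g$ inside $\langle x\rangle\llangle v\rrangle=(\langle x\rangle\times F_0)\amalg F$ via Lemma \ref{normal closure}, invokes \cite[Theorem B]{HeZa85} to force $g$ into the factor $\langle x\rangle\times F_0$, and finishes with torsion-freeness (Theorem \ref{prop: Raags torsionfree}). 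None of these ingredients appears in your proposal, and the reductions you do propose each break at a specific step as described above.
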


\begin{proof}
     Let $G=G_\Gamma$. The theorem is equivalent to the statement that for every $g\in G$ and $k\in \widehat{\mathbb{Z}}_\C$, $\alpha(g)=\alpha(g^k)$. Suppose this is not true, so that there exists a vertex $v\in \alpha(g)\smallsetminus \alpha (g^k)$. Consider the standard pro-$\C$ tree $T$ associated with the splitting (\ref{eq: splitting with a vertex}) and notice that $g^k\in G_{\Gamma\smallsetminus \{v\}}$ stabilizes the vertex $t$ of $T$, which is stabilized by the standard subgroup $G_{\Gamma \smallsetminus \{v\}}$. Note that  $g$ can not act  hyperbolically on $T$ since $g^k$ is elliptic (cf. \cite[Corollary 2.5 (i)]{KZ23}). So
 both $g$ and $g^k$ act elliptically. In this case, we can argue by induction on the number of generators of $G_\Gamma$ and suppose that standard subgroups of pro-$\C$ RAAGs with at most $|V(\Gamma)|-1$ generators are isolated. By induction, and using that isolation is invariant by conjugation,  edge stabilizers are isolated in adjacent vertex stabilizers. Denote by $f$ the projection onto the standard subgroup $G_{\Gamma\smallsetminus \{v\}}$ (modulo the normal closure of $v$), and set $x=f(g)$.

    Notice that the set of edges of a standard pro-$\C$ tree associated with a splitting is compact by construction, and therefore for any vertex of any of its pro-$\C$ subtrees containing at least an edge, there always exists an edge of the subtree adjacent to it (see \cite[Proposition 2.1.6 (c)]{RibesBook}). As  $x^k=g^k$ fixes both the vertex fixed by $g$ and the vertex $t$ stabilized by $G_{\Gamma\smallsetminus \{v\}}$, there exists a stabilizer of an edge $e$ adjacent to $t$ that contains $x^k$. By inductive hypothesis on isolation  $x$ fixes $e$   and    $he$ is fixed by  $G_{\tiny{\mbox{Link}(v)}}$ for some $h\in G_{\Gamma\smallsetminus \{v\}}$. Then   conjugating $g$  and $x$ by $h$ if necessary we may assume that $x\in G_{\tiny{\mbox{Link}(v)}}$.  

     Denoting by $\llangle v \rrangle$ the normal closure of $v$ in $G_\Gamma$, we have  $g\in \langle x \rangle \llangle v \rrangle$. By Lemma \ref{normal closure}
    $$
    \langle x \rangle \llangle v \rrangle =  \big(\langle x \rangle \times F_0 \big) \coprod F,$$
    where $F,F_0$ are free pro-$\C$.  
    Now $x$ and $g$ centralize $g^k=x^k$ and $x^k$ clearly lies in the factor $\langle x \rangle \times F_0$ of the free product. By \cite[Theorem B]{HeZa85}, the element $g$ must also lie in $\langle x \rangle \times F_0$.
    Let $c\in F_0$ be such that $g=x^{\ell}\cdot c$. Then $x^k=g^k=x^{k \ell}\cdot c^k$. Since by Proposition \ref{prop: Raags torsionfree} $G$ is torsion-free, it follows that $c=1$ and $\ell=1$. Therefore, $g=x$ , contradicting the choice of $g$. This proves the statement.
    
\end{proof}

\begin{theorem}[Strong Tits' alternative]\label{thm: solvable subgroups are metabelian}
Let $H$ be a closed subgroup of a pro-$\mathcal{C}$ RAAG $G_\Gamma$ that does not contain a free non-abelian pro-p subgroup for
any prime $p$. Then $H$ is metabelian and polycyclic. Moreover, if $H$ is pro-$p$, then $H$ is abelian.
\end{theorem}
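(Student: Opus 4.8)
The plan is to argue by induction on the number of vertices $|V(\Gamma)|$, using the standard splittings of $G_\Gamma$ together with the subgroup structure theorems for pro-$\mathcal{C}$ amalgams and HNN extensions (Theorems \ref{thm: RZ structure amalgam} and \ref{thm: RZ structure HNN}). The base case $|V(\Gamma)|\le 1$ is trivial, since then $G_\Gamma$ is either trivial or $\widehat{\Z}_\C$, and every closed subgroup is procyclic. For the inductive step, fix a vertex $v\in V(\Gamma)$ and use the proper HNN splitting $G_\Gamma=HNN(G_{\Gamma\smallsetminus\{v\}}, G_{\link(v)},\mathrm{id})$ from Remark \ref{rem: hyperbolic splitting}, with standard pro-$\mathcal{C}$ tree $T$. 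Given a closed subgroup $H$ with no non-abelian free pro-$p$ subgroup for any $p$, apply Theorem \ref{thm: RZ structure HNN} to $H\le G_\Gamma$: case (2) is excluded by hypothesis, so either (1) $H$ is conjugate into $G_{\Gamma\smallsetminus\{v\}}$, which is a pro-$\mathcal{C}$ RAAG on fewer vertices and we are done by induction (conjugation being harmless, and metabelian/polycyclic/abelian being conjugation-invariant); or (3) there is $H_0\trianglelefteq H$, contained in a conjugate of $G_{\link(v)}$, with $H/H_0$ a small projective solvable group ($\Z_\sigma\rtimes\Z_\rho$, or $\Z_\sigma\rtimes C_n$, or an infinite dihedral pro-$\sigma$ group).

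In case (3), the key point is that $H_0$, sitting inside a conjugate of $G_{\link(v)}$, is a closed subgroup of a pro-$\mathcal{C}$ RAAG on strictly fewer vertices (since $v\notin\link(v)$), and it still contains no non-abelian free pro-$p$ subgroup; hence by induction $H_0$ is metabelian and polycyclic, and abelian if $H$ (hence $H_0$) is pro-$p$. Since $H/H_0$ is itself metabelian and polycyclic (indeed virtually abelian of rank $\le 2$ in the relevant cases, or infinite dihedral), $H$ is polycyclic, being an extension of polycyclic by polycyclic. For metabelianness I would show $H'\le H_0$ — in the torsion-free directly-decomposable cases $H/H_0\cong\Z_\sigma\rtimes\Z_\rho$ has abelian-by-abelian structure but actually we need $H''=1$: here one uses that $H_0$ is metabelian so $H''\le H_0'$, and then that $H_0'$, being the derived subgroup of a subgroup of a pro-$\mathcal{C}$ RAAG on fewer vertices, interacts with the $H/H_0$ action in a controlled way; a cleaner route is to note that in cases (1) and (3) one can choose the splitting (or iterate over the finitely many vertices) so that eventually $H$ is abelian-by-abelian directly from the structure. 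I expect the honest treatment needs a short separate argument for the pro-$p$ statement, where Theorem \ref{thm: two generated subgroups}-style reasoning forces $H/H_0$ (a pro-$p$ quotient of a projective group of the listed type) to be $\Z_\rho$ or trivial, whence $H$ is (abelian)-by-(procyclic) and one checks directly it is abelian using that commuting elements in a pro-$p$ RAAG generate abelian subgroups (cf. the centraliser description, Theorem \ref{thm: centralisers structure}), or more simply that a pro-$p$ subgroup avoiding free pro-$p$ subgroups of rank $2$ and sitting in a RAAG must be abelian by an induction entirely parallel to Theorem \ref{thm: two generated subgroups}.

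The main obstacle is the metabelian (as opposed to merely solvable or polycyclic) conclusion in the general pro-$\mathcal{C}$ case: the structure theorem only gives that $H/H_0$ is solvable of derived length $\le 2$ and $H_0$ is metabelian by induction, which a priori yields derived length $\le 4$. To get derived length exactly $2$ I would use that $H_0$ lies in a conjugate of the edge group $G_{\link(v)}$ and that, by the inductive description, $H_0$ is not just metabelian but abelian-by-(virtually cyclic) with the abelian part being a standard-type subgroup; combining this with the fact that the $\Z_\rho$ (or $C_n$, or dihedral) quotient acts on $H_0$ through its action on $T$, one shows $[H,H]$ already lies in the abelian part. Concretely, I would refine the induction hypothesis to "$H$ is abelian-by-(polycyclic of Hirsch length $\le$ something)" with the abelian normal subgroup explicit enough to be preserved under the extension, so that the two-step bound is maintained rather than doubled; this bookkeeping is the part that requires care but no deep new idea, given the machinery already assembled (Lemmas \ref{lem: culler vogtmann}, \ref{lem: commutation of a path}, \ref{lem: standard subgroups isolated}, and the structure Theorems \ref{thm: RZ structure amalgam}–\ref{thm: RZ structure HNN}).
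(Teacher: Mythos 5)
Your skeleton (induction on $|V(\Gamma)|$, the HNN splitting of Remark \ref{rem: hyperbolic splitting}, Theorem \ref{thm: RZ structure HNN} to dispose of cases (1) and (2)) is exactly the paper's, but the two places where the actual work happens are missing. First, in case (3) you leave all three possibilities for $H/H_0$ on the table and never obtain a splitting of $H$. The paper uses Lemma \ref{lem: standard subgroups isolated}: since $G_{\tiny{\link(v)}}$ is isolated in $G_\Gamma$, the normal subgroup $H_0$ is isolated in $H$, so $H/H_0$ is torsion-free; by the remark following Theorems \ref{thm: RZ structure amalgam}--\ref{thm: RZ structure HNN} this forces $H/H_0\cong\Z_\sigma\rtimes\Z_\rho$ and, by projectivity, $H=H_0\rtimes P$ with $P\cong\Z_\sigma\rtimes\Z_\rho$. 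Without this step the Frobenius and dihedral cases survive and the rest of the argument has nothing to act on.

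Second, the metabelian conclusion --- which you correctly identify as the main obstacle --- is not established, and your first suggested route fails: $H'\leq H_0$ is impossible whenever $P$ is non-abelian, since the image of $H'$ in $H/H_0\cong P$ contains $[P,P]\cong\Z_\sigma\neq 1$. Applying the inductive hypothesis to $H_0$ (metabelian) only bounds the derived length of $H$ by $4$. The paper's missing idea is to run the induction through the retraction $f:G_\Gamma\to G_{\Gamma\smallsetminus\{v\}}$ with kernel $K=\llangle v\rrangle$ rather than through $H_0$: by induction $f([H,H])$ is \emph{abelian}, and since $[H_0,H_0][H_0,P]\leq H_0\leq G_{\tiny{\link(v)}}$ is fixed by $f$, this factor of $[H,H]=[H_0,H_0][H_0,P][P,P]$ is already abelian, not merely metabelian. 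One then writes $[P,P]\cong (K\cap[P,P])\times f([P,P])$ (using that $f([P,P])$ is torsion-free), notes that $K\cap H$ centralises $H_0$ because the two are normal in $H$ with trivial intersection, and that $f([P,P])$ centralises $[H_0,H_0][H_0,P]$ inside the abelian group $f([H,H])$; hence $[H,H]$ is abelian. The same setup makes the pro-$p$ case a one-line consequence ($P\cong\Z_p$ acts on $H_0$ through $f(P)$, which acts trivially since $f(H)$ is abelian by induction, so $H=H_0\times P$), whereas your appeal to Theorem \ref{thm: two generated subgroups}-style reasoning or to Theorem \ref{thm: centralisers structure} is both roundabout and, in the form ``a pro-$p$ subgroup with no free pro-$p$ subgroup must be abelian by a parallel induction,'' essentially a restatement of the claim to be proved.
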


\begin{proof} We use induction on the number of vertices of $\Gamma$. If $\Gamma$ consists of a single vertex, then any subgroup of $G_\Gamma$ is pro-$\C$ cyclic and the result follows. As we observed in Remark \ref{rem: hyperbolic splitting}, $G_\Gamma=HNN(G_{\Gamma\setminus \{v\}}, G_{\tiny{\mbox{Link}}(v)},id)$ for an arbitrarily chosen vertex $v$. If $H$ is conjugate to a subgroup of $G_{\Gamma\setminus \{v\}}$ then we deduce the result from the induction hypothesis. Otherwise, by Theorem \ref{thm: RZ structure HNN} there exists a normal subgroup $H_0\trianglelefteq H$ contained in some conjugate of $\mbox{Link}(v)$ such that $H/H_0$ is metacyclic.  Lemma \ref{lem: standard subgroups isolated} guarantees that $G_{\tiny{\mbox{Link}}(v)}$ is an isolated subgroup of $G_\Gamma$ and so $H_0$ is isolated in $H$. It follows that the only possibility for $H/H_0$ in Theorem \ref{thm: RZ structure HNN} is the projective group $\Z_\sigma\rtimes \Z_\rho$, so that $H=H_0\rtimes P$, with $P\cong \Z_\sigma\rtimes \Z_\rho$ and we may assume without loss of generality that $[P,P]\cong \Z_\sigma$.

Consider the projection $f:G_\Gamma \longrightarrow G_{\Gamma\setminus \{v\}}$ and let $K$ be the kernel of $f$. The image $f([H,H])$ is abelian and finitely generated by the induction hypothesis. Since $K\cap H$ and $H_0$ are normal in $H$ and do not intersect, as $H_0\subseteq G_{\tiny{\mbox{Link}}(v)}$, we have that $(K\cap H)H_0=(K\cap H)\times H_0$. Then the commutator subgroup $[H,H]= [H_0,H_0][H_0,P][P,P]$, and $[H_0,H_0] [H_0,P]$ is abelian and polycyclic because it is in the image of $f([H,H])$. Thus we just need to show that $[P,P]$ centralizes $[H_0,H_0] [H_0,P]$. To see this observe that $f([P,P])$ is torsion-free and so $[P,P]\cong (K\cap [P,P])\times f([P,P])$. Now $K$ centralizes $H_0$ and $f([P,P])$ centralizes $f([H_0,H_0] [H_0,P])=[H_0,H_0] [H_0,P]$ by inductive hypothesis, so we deduce that $[H,H]$ is abelian.

Suppose now $H$ is pro-$p$. Then $P\cong \Z_p$. But the action of $P$ on $H_0$ is the same as the action of $f(P)$ on $H_0$ which is trivial since $f(H)$ is abelian by the inductive hypothesis. Therefore $H=H_0\times P$ is abelian. 
\end{proof}

The next result describes two-generated   pro-$p$ subgroups of pro-$\C$ RAAGs. The pro-$\C$ case is more complicated, as there are two generated metabelian pro-$\C$ subgroups but also pro-$\C$ subgroups of the form $(\Z_{\sigma_1} \coprod \Z_{\tau_1}) \times \cdots \times (\Z_{\sigma_\ell} \coprod \Z_{\tau_\ell})$ for $\sigma_i, \tau_i \subseteq \pi(\C)$, with $\sigma_i$ pairwise disjoint and $\tau_i$ pairwise disjoint.

\begin{theorem}[2-generated pro-$p$ subgroups]\label{thm: two generated subgroups}
Let $H$ be a two-generated  pro-$p$ subgroup of a pro-$\C$ RAAG $G_\Gamma$. Then $H$ is either free pro-$p$ or free abelian.
\end{theorem}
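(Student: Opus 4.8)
The plan is to mimic the structure of the proof of Theorem \ref{thm: solvable subgroups are metabelian}, running an induction on the number of vertices of $\Gamma$ and using the HNN splitting $G_\Gamma=HNN(G_{\Gamma\setminus\{v\}},G_{\link(v)},id)$ from Remark \ref{rem: hyperbolic splitting}, which is proper by Corollary \ref{cor: proper_retraction}. The base case ($|V(\Gamma)|=1$) is clear since then $G_\Gamma\cong\widehat{\Z}_\C$ and every subgroup is procyclic, hence free abelian. For the inductive step, apply Theorem \ref{thm: RZ structure HNN} to the two-generated pro-$p$ group $H$ acting on the standard pro-$\C$ tree $T$. In case (1), $H$ is conjugate into $G_{\Gamma\setminus\{v\}}$ and we conclude by the induction hypothesis. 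In case (2), $H$ contains a non-abelian free pro-$p$ subgroup $P$; since $H$ is two-generated pro-$p$, I would invoke the fact that a non-abelian free pro-$p$ group of finite rank $\ge 2$ cannot be generated by fewer elements than its rank, so $H=P$ is itself free pro-$p$ of rank $2$ (more precisely: a subgroup of a free pro-$p$ group is free pro-$p$ by the pro-$p$ Nielsen--Schreier theorem, and $d(H)=2$ forces $H$ free pro-$p$ of rank $2$). Hence in case (2) we are done.

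The real work is case (3). Here there is a normal subgroup $H_0\trianglelefteq H$, contained in a conjugate of $G_{\link(v)}$, with $H/H_0$ solvable and of one of the listed types; but $H$ is pro-$p$, so $H/H_0$ is a solvable pro-$p$ quotient and the only surviving possibility is $H/H_0\cong \Z_p$ (the Frobenius/dihedral alternatives and mixed $\Z_\sigma\rtimes\Z_\rho$ with $\sigma\cap\rho=\varnothing$ cannot be pro-$p$ unless one factor is trivial; and $\Z_p\rtimes\Z_p$ acting nontrivially is not possible since $\Z_p$ has no nontrivial continuous pro-$p$ action on $\Z_p$ of finite order, while the semidirect product structure from Theorem \ref{thm: RZ structure HNN} third case has the kernel acting so that the quotient is metacyclic — for pro-$p$ this means $H/H_0\cong\Z_p$). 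Since $H_0\le G_{\link(v)}$ up to conjugacy and $G_{\link(v)}$ is isolated in $G_\Gamma$ by Lemma \ref{lem: standard subgroups isolated}, $H_0$ is isolated in $H$, so $H/H_0$ is torsion-free; thus $H=H_0\rtimes\Z_p$. Now consider the retraction $f\colon G_\Gamma\to G_{\Gamma\setminus\{v\}}$ with kernel $K$. As in the previous proof, $f(H)$ is a two-generated pro-$p$ subgroup of $G_{\Gamma\setminus\{v\}}$, hence free pro-$p$ or free abelian by induction; and $K$ centralizes $H_0$ (since $H_0\le G_{\link(v)}$, which is centralized by $v$ and lies in $\ker f$'s complement appropriately — more carefully, $K=\llangle v\rrangle$ is free pro-$p$ and $H_0\le G_{\link(v)}$ commutes with $v$).

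From here I would split into two sub-cases according to whether $f(H)$ is free abelian or non-abelian free pro-$p$. If $f(H)$ is non-abelian free pro-$p$ of rank $2$, then $f$ restricted to $H$ must be injective on a set of generators whose images generate freely, which forces $K\cap H=1$, so $H\cong f(H)$ is free pro-$p$ of rank $2$ and we are done. If $f(H)$ is free abelian (of rank $0$, $1$, or $2$), then $[H,H]\le K\cap H$, and since $K$ is free pro-$p$ while $[H,H]$ is a normal pro-$p$ subgroup of the metabelian-by-construction group $H$, I would argue as follows: $H_0\le K$? — not necessarily, so instead use that $H_0$ is contained in a conjugate of $G_{\link(v)}$, apply Lemma \ref{normal closure} (writing $\langle y\rangle\llangle v\rrangle=(F_0\times\langle y\rangle)\amalg F$) to pin down the commuting structure, and use torsion-freeness (Theorem \ref{prop: Raags torsionfree}) exactly as in Lemma \ref{lem: standard subgroups isolated} to conclude that the $\Z_p$-factor acts trivially on $H_0$, hence $H=H_0\times\Z_p$ is abelian, and being a two-generated abelian pro-$p$ group it is $\widehat{\Z}_p^a$ for $a\le 2$, i.e. free abelian.

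The main obstacle is the case-(3) analysis showing the $\Z_p$-quotient acts trivially: one must rule out a genuinely nonabelian metabelian two-generated pro-$p$ subgroup (such as a pro-$p$ analogue of the Klein bottle group or $\Z_p\rtimes\Z_p$ with nontrivial action). This is where isolation of $G_{\link(v)}$, torsion-freeness of $G_\Gamma$, and the free-product decomposition of Lemma \ref{normal closure} must be combined carefully — the key point being that after conjugating $H_0$ into $G_{\link(v)}$, the generator realizing the $\Z_p$-quotient can be taken of the form $v\cdot(\text{something in }G_{\Gamma\setminus\{v\}})$, and commutation relations forced by two-generation collapse the action to the trivial one, just as the element $c\in F_0$ was forced to be trivial in the proof of Lemma \ref{lem: standard subgroups isolated}. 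I would also need the small lemma that a two-generated pro-$p$ group which is free pro-$p$ must have rank exactly $2$ (rank $1$ gives $\Z_p$, already abelian), so that "free pro-$p$ or free abelian" is an honest dichotomy with no overlap to worry about beyond the procyclic case.
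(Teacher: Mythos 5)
Your overall scaffolding (induction on $|V(\Gamma)|$, the proper HNN splitting at a vertex $v$, disposing of the case where $f(H)$ is free non-abelian via Hopficity, and the identification $H/H_0\cong\Z_p$ via isolation of $G_{\link(v)}$) agrees with the paper, but there are two genuine gaps. First, your case (2) is wrong as stated: Theorem \ref{thm: RZ structure HNN}(2) only says that $H$ \emph{contains} a free non-abelian pro-$p$ subgroup $P$ meeting every vertex stabiliser trivially; it does not say $H=P$, and your appeal to pro-$p$ Nielsen--Schreier runs in the wrong direction ($P\le H$, not $H\le P$). A $2$-generated pro-$p$ group can contain non-abelian free pro-$p$ subgroups without being free, and nothing in your argument rules out the scenario in which $H$ is not conjugate into a vertex group, contains such a $P$, and yet meets vertex stabilisers nontrivially; this scenario falls outside both your case (1) and your case (3), and it is exactly the situation the paper must (and does) handle.

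Second, the heart of the matter --- showing that when $f(H)$ is abelian the group $H$ is forced to be free pro-$p$ or free abelian --- is left as ``combine isolation, torsion-freeness and Lemma \ref{normal closure} carefully'', which is not a proof. The paper's actual mechanism here is different and essential: it invokes \cite[Theorem 6.8]{ChatzidakisZalesskii} to write $H$ as the fundamental group of a \emph{finite} reduced graph of pro-$p$ groups whose vertex and edge groups are vertex and edge stabilisers of $H$; induction plus isolation shows the vertex groups are abelian of rank at most $2$ and the edge groups are procyclic or trivial, and a combinatorial case analysis on the (at most two-vertex) graph finishes: a trivial edge group gives a free pro-$p$ product of procyclic groups, hence a free pro-$p$ group; a loop over a rank-one vertex gives $\Z_p^2$; and the rank-two vertex with a loop, $\mathcal{H}(w)=\langle x,x^t\rangle$, is excluded by applying the induction hypothesis to $\langle x, f(t)\rangle$. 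Your route via $H=H_0\rtimes\Z_p$ with $H_0$ in a conjugate of $G_{\link(v)}$ does not obviously close: $H_0$ need not be procyclic, you never establish that $H_0$ is abelian (which you need for ``$H=H_0\times\Z_p$ is abelian''; it does follow from $d(H)=2$ by a Frattini argument, but that has to be said), and the claim that the commutation relations ``collapse the action to the trivial one'' is precisely the statement to be proved. As written, the proposal does not prove the hard half of the theorem.
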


\begin{proof} We use induction on the number of vertices of $\Gamma$. If $\Gamma$ is a vertex, then $G_\Gamma$ is abelian and so is each subgroup so the statement holds. As we noticed in Remark \ref{rem: hyperbolic splitting}, chosen an arbitrary vertex $v$, we can obtain a decomposition $G_\Gamma=HNN(G_{\Gamma\setminus \{v\}}, G_{\tiny{\mbox{Link}}(v)},id)$. If $H$ is conjugate to a subgroup of $G_{\Gamma\setminus \{v\}}$ then we deduce the result from the induction hypothesis. Otherwise, $H$ acts non-trivially on the standard pro-$\C$ tree associated with HNN extension $G_\Gamma=HNN(G_{\Gamma\setminus \{v\}}, G_{\tiny{\mbox{Link}}(v)},id)$. Considering the projection $f:G_\Gamma \longrightarrow G_{\Gamma\setminus \{v\}}$, by induction hypothesis we can deduce that $f(H)$ is either a free or abelian pro-$p$ group. If $f(H)$ is free, then we are done because by the Hopfian property (see \cite[Proposition 2.5.2]{RZBook}) the projection must be an isomorphism.

Suppose suppose now that $f(H)$ is abelian. By the induction hypothesis, we can assume that every stabiliser of a vertex in $H$ is abelian of rank at most two. By Lemmas \ref{lem: Raags as completions} and \ref{lem: standard subgroups isolated}, $G_{\tiny{\mbox{Link}}(v)}$ is an isolated subgroup of $G_\Gamma$ and so every edge stabiliser of $H$ is isolated in the vertex stabiliser of its incident vertex. As the only isolated proper subgroups of an abelian pro-$p$ group of rank two are procyclic, it follows that such are the edge stabilisers. Then by \cite[Theorem 6.8]{ChatzidakisZalesskii}, $H$ is the fundamental group of a finite graph of pro-$p$ groups $(\mathcal{H}, \Delta)$ whose vertex and edge groups are isomorphic to vertex and edge stabilisers in $H$ respectively. Assuming without loss of generality that this graph of groups is reduced, we deduce that isolation of edge groups in the incident vertex groups implies that either the edge groups have strictly smaller rank than the incident vertex groups, or they coincide (in the case when $\Delta$ contains loops).

As $H$ is two generated and edge groups are isolated, $\Delta$ cannot have more than two vertices. For the same reason, if $|V(\Delta)|=2$, as the graph of groups is reduced, the edge group between the two vertices can only be trivial. Another remark is that the stable letter of each HNN extension corresponding to a loop must be one of the two generators of the group.

If at least an edge group is trivial, then $H$ splits as a free pro-$p$ product (see \cite[Proposition 2.16]{CastZal22}) and so, being  2-generated, it has to be a free pro-$p$ product of torsion-free cyclic groups, hence it is free pro-$p$. We only have to analyse the case when no edge group is trivial.

If $\Delta$ has a single vertex, we have to analyse the case when there are zero, one, or two loops. If there are two loops, as each stable letter of an HNN extension must be one of the two generators, the vertex group must be trivial and $H$ is a free pro-$p$ group. In all of the other cases, the vertex group has either rank one or two.

Let us now consider the case when $\Delta$ has a vertex $w$ with a single loop and the vertex group has either rank one or 2.

Suppose the vertex group has rank one. Since we can assume the edge group not to be trivial, it is also abelian of rank 1 and since edge groups are retractions, it follows that the edge group coincides with the vertex group. Since pro-$\C$ RAAGs are torsion-free, it follows that $H$ splits as an HNN extension $HNN(\mathcal{H}(w), \mathcal{H}(w), id)$, which is a free abelian pro-$p$ group of rank two.

The last case to consider is when $\mathcal{H}(w)$ is free abelian of rank 2. As the group is two-generated and one generator must be the stable letter $t$ of the HNN extension, the only possibility is that $\mathcal{H}(w)=\langle x, x^t \rangle$ for some $x,t\in G$, $t$ power of the stable letter. Consider the retraction $f:G_\Gamma \to G_{\Gamma\setminus \{v\}}$. Since $\langle x, x^t \rangle$ is a free abelian group of rank 2, we have that $x=f(x), x^{f(t)}$ also generate an abelian group of rank 2 and so, in particular, $f(t)\in G_{\Gamma\setminus \{v\}}$ is nontrivial. Now, by induction hypothesis, the $2$-generated subgroup $\langle x, f(t)\rangle < G_{\Gamma\setminus\{v\}}$ is either free or free abelian. The latter case implies that $x=x^{f(t)}$ contradicting the fact that $\langle x, x^{f(t)}\rangle$ is of rank 2. If $x$ and $f(t)$ generate a free group, then $[x, x^{f(t)}] \ne 1$ contradicting that the fact that $\langle x, x^{f(t)} \rangle$ is abelian. This proves that this case cannot occur.

Since all the alternatives have been considered, the result follows.
\end{proof}

\section{Graph characterisation of the direct product decomposition of pro-\texorpdfstring{$\C$}{C} RAAGs}

Our goal is to show that the direct product decomposition of a pro-$\C$ RAAG is encoded by the defining graph. More precisely $G_\Gamma \simeq A_1 \times A_2$ where $A_1$ and $A_2$ are non-trivial pro-$\C$ groups if and only if $\Gamma$ is a join, see Theorem \ref{thm: directly decomposable}.

\begin{lemma}[Standard subgroup of a centraliser]\label{lem: centralisers}
Let $G_\Gamma$ be a pro-$\mathcal{C}$ RAAG and let $g\in G_\Gamma$ be an element with minimal support among its conjugates. Then, the centraliser of $g$ is contained in the standard subgroup generated by $\link(g)\cup \alpha(g)$. In particular, if $g=v$ is a standard generator, then $C_G(v) = G_{\tiny{\mbox{Star}(v)}}=\langle v\rangle \times G_{\tiny{\link(v)}}$.
\end{lemma}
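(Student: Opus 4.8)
The plan is to prove the two assertions separately, deriving the second as a special case of the first once we understand the structure of $G_{\link(g)\cup\alpha(g)}$.

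First I would handle the containment $C_G(g)\subseteq G_{\link(g)\cup\alpha(g)}$. The idea is to induct on $|V(\Gamma)|$, peeling off one vertex $v$ at a time via the proper HNN splitting $G_\Gamma=HNN(G_{\Gamma\smallsetminus\{v\}}, G_{\link(v)}, \mathrm{id})$ of Remark~\ref{rem: hyperbolic splitting}. Fix $v\in\alpha(g)$; since $g$ has minimal support among its conjugates, Remark~\ref{rem: hyperbolic splitting} tells us $g$ acts hyperbolically on the standard pro-$\C$ tree $T$ of this splitting. Take any $h\in C_G(g)$. By Lemma~\ref{lem: culler vogtmann}(1), $h$ leaves $T_g$ invariant; since $g$ is hyperbolic, $T_g$ is its (unique) axis, and $h$ must act on this axis by a translation commuting with $g$'s translation, hence either ellipticly-on-the-line (impossible unless $h$ fixes the whole axis, which for a hyperbolic-type line forces $h$ to act trivially) or hyperbolicly with the same translation direction. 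In either case one extracts that $h$, modulo a controlled correction, lies in the standard subgroup and its ``$v$-exponent'' is tied to that of $g$; more precisely one argues that the projection $f\colon G_\Gamma\to G_{\Gamma\smallsetminus\{v\}}$ carries $C_G(g)$ into $C_{G_{\Gamma\smallsetminus\{v\}}}(f(g))$, which by induction sits inside $G_{(\link_{\Gamma\smallsetminus\{v\}}(f(g)))\cup\alpha(f(g))}$. One then checks $\alpha(f(g))=\alpha(g)\smallsetminus\{v\}$ (using that $g$ has minimal support, so no cancellation occurs) and, crucially, that a vertex $w$ adjacent to all of $\alpha(g)\smallsetminus\{v\}$ but \emph{not} to $v$ cannot appear in the support of any element of $C_G(g)$ — this uses that such a $w$ and $v$ generate a free pro-$\C$ subgroup, together with the observation that commuting with $g$ forces the $v$- and $w$-syllables of $h$ to commute with each other. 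Assembling these gives $C_G(g)\subseteq G_{\link(g)\cup\alpha(g)}$.

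The hard part will be the ``no stray vertex'' step: ruling out a vertex $w\in\link(\alpha(g)\smallsetminus\{v\})\smallsetminus\mathrm{Star}(v)$ from the support of a centralising element. I would use Lemma~\ref{normal closure}: write the relevant piece inside $\langle y\rangle\llangle v\rrangle=(F_0\times\langle y\rangle)\amalg F$ with $F,F_0$ free pro-$\C$, and invoke the centraliser-in-free-product result \cite[Theorem B]{HeZa85} to force the commuting element into the direct factor $F_0\times\langle y\rangle$, which then has no room for $w$. Torsion-freeness (Theorem~\ref{prop: Raags torsionfree}) is needed to clean up exponents at the end, exactly as in the proof of Lemma~\ref{lem: standard subgroups isolated}.

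For the ``in particular'' clause, take $g=v$ a standard generator (which is already of minimal support among its conjugates). Then $\alpha(v)=\{v\}$ and $\link(v)$ is the link in the usual graph sense, so $\link(v)\cup\alpha(v)=\mathrm{Star}(v)$, giving $C_G(v)\subseteq G_{\mathrm{Star}(v)}$. Conversely, every vertex of $\link(v)$ commutes with $v$ by definition of a RAAG, and $v$ commutes with itself, so $G_{\mathrm{Star}(v)}\subseteq C_G(v)$; hence equality. Finally, in $\mathrm{Star}(v)$ the vertex $v$ is adjacent to every vertex of $\link(v)$, so the defining relations make $G_{\mathrm{Star}(v)}$ the internal direct product $\langle v\rangle\times G_{\link(v)}$ (formally, $\mathrm{Star}(v)$ is the join of the single vertex $v$ with $\link(v)$, and the pro-$\C$ RAAG of a join is the direct product of the pro-$\C$ RAAGs of the pieces, which follows directly from the pro-$\C$ presentation). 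This completes the proof.
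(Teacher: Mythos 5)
Your overall strategy is genuinely different from the paper's, and as written it has gaps that I do not see how to close. The paper also uses the HNN splitting of Remark~\ref{rem: hyperbolic splitting}, but it deletes a vertex $v\in\alpha(h)\smallsetminus(\link(g)\cup\alpha(g))$ chosen from the support of the offending \emph{centralising} element $h$, not from $\alpha(g)$. With that choice $g$ stays inside the base group $G_{\Gamma\smallsetminus\{v\}}$ and is elliptic; minimality of the support of $g$ rules out $g$ lying in any edge stabiliser (else $\alpha(g)\subseteq\link(v)$, forcing $v\in\link(g)$), so the fixed-point set of $g$ is a single vertex, which $h$ must preserve and hence fix by Lemma~\ref{lem: culler vogtmann}(1). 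Thus $h\in G_{\Gamma\smallsetminus\{v\}}$, contradicting $v\in\alpha(h)$. No induction, no axis, no analysis of the kernel of the retraction is needed.

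Your version has three concrete problems. First, the claim $\alpha(f(g))=\alpha(g)\smallsetminus\{v\}$ is false: take $\Gamma$ two non-adjacent vertices $a,b$ and $g=[a,b]$, which has minimal support $\{a,b\}$ among its conjugates, yet $f(g)=1$ under the retraction killing $a$; so the inductive step can collapse and yield no information. Second, the dichotomy you invoke for the action of $h$ on the ``axis'' $T_g$ of a hyperbolic element (elliptic-on-the-line versus translation in the same direction) is a statement about abstract simplicial trees; for pro-$\C$ trees the minimal invariant subtree of a hyperbolic element is not a line and no cited result gives you a translation length or direction, so this step is unsupported. Third, even if the induction produced $f(C_G(g))\subseteq G_S$ for the right $S$, that controls only the image of $h$ modulo $\llangle v\rrangle$, which is a large free pro-$\C$ normal subgroup; an element such as $h=wvw^{-1}v^{-1}s$ with $s\in G_S$ and $w\notin S$ has $f(h)\in G_S$ but $w\in\alpha(h)$. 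All of that burden falls on your ``no stray vertex'' step, which is only sketched, and the tools you name (Lemma~\ref{normal closure} together with \cite[Theorem B]{HeZa85}) are set up for an element $y\in G_{\link(v)}$ and do not obviously apply to the configuration you describe. The ``in particular'' clause of your argument is fine, but the main containment needs the paper's choice of $v$ (or a genuinely completed alternative) to go through.
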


\begin{proof}
Suppose towards contradiction that there is an element $h$ commuting with $g$ whose support is not contained in $\link(g)\cup \alpha(g)$. Then there exists $v\in \alpha(h)$ such that $v\notin \link(g)\cup \alpha(g)$. Denoting by $G_0=G_{\Gamma \smallsetminus \{v\}}$ and by $A=G_{\tiny{\link(v)}}$ and using Remark \ref{rem: hyperbolic splitting}, we have that the group $G_\Gamma$ splits as a proper HNN extension of the form
$$G_\Gamma=HNN(G_0, A, id)$$
where the action by conjugation of $v$ on $A$ is trivial. Notice that from the assumption on $h$, we have that $h\notin G_0$. We next study the action of $g$ and $h$ on the standard pro-$\mathcal{C}$ tree $T$ associated with this splitting. 

Notice that $g \in G_0$ and so $g$ is elliptic. However, $g$ cannot belong to any edge stabiliser. Indeed, otherwise, there would exist an element $t\in G_\Gamma$ such that $g^t\in A$ and in this case, since $g$ has by assumption minimal support, it would follow from Lemma \ref{lem: minimal standard subgroups} that $\alpha(g)\subseteq \alpha(g^t)\subseteq \link(v)$ and so $v\in Link(g)$ contradicting the choice of $v$. Since $g$ cannot be in any edge stabiliser, we conclude that $g$ only fixes the vertex $v$ stabilised by $G_0$, i.e. $T_g=\{v\}$. From Lemma \ref{lem: culler vogtmann} (1), $h$ has to leave $T_g=\{v\}$ invariant and, in particular, $h$ fixes $v$. Then $h$ belongs to $G_0$, a contradiction.
\end{proof}

\begin{lemma}[Direct factors are in stars]\label{direct product} Suppose a pro-$\C$ RAAG $G=G_\Gamma$  decomposes as a direct product $G_\Gamma = A_1 \times A_2$ of non-trivial groups. Then for each canonical generator $v\in \Gamma$, at least one factor $A_i$ is contained in $G_{\tiny{\mbox{Star}(v)}}$.
\end{lemma}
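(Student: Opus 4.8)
The plan is to exploit the HNN splitting $G_\Gamma = HNN(G_0, A, \mathrm{id})$ from Remark \ref{rem: hyperbolic splitting}, where $G_0 = G_{\Gamma\smallsetminus\{v\}}$ and $A = G_{\tiny{\mbox{Link}(v)}}$, together with the structure theorem for subgroups of HNN extensions (Theorem \ref{thm: RZ structure HNN}) applied to the two direct factors $A_1$ and $A_2$. The key point is that $A_1$ and $A_2$ centralise each other, so their actions on the standard pro-$\C$ tree $T$ are strongly constrained.

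First I would rule out the case that one of the factors, say $A_1$, contains a free non-abelian pro-$p$ subgroup $P$ with $P\cap g G_0 g^{-1} = 1$ for all $g$: since $A_2$ is non-trivial and centralises $A_1 \supseteq P$, any non-trivial element of $A_2$ would commute with all of $P$, which is impossible in a free non-abelian pro-$p$ group (its centralisers are procyclic, hence abelian, and $P$ is not). Actually the cleanest phrasing: $A_2$ would embed in $C_{A_1}(P)$, but a free non-abelian pro-$p$ group has trivial centre and more generally a subgroup centralising a non-abelian free pro-$p$ subgroup must be trivial — this uses that in $A_1$ (a subgroup of a pro-$\C$ RAAG), by Theorem \ref{thm: solvable subgroups are metabelian} or directly, centralisers behave well. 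So case (2) of Theorem \ref{thm: RZ structure HNN} is excluded for both factors. This leaves, for each $i\in\{1,2\}$: either $A_i$ is conjugate into $G_0$ (case (1)), or $A_i$ has a normal subgroup $H_0^{(i)}$ inside a conjugate of $A$ with metabelian quotient (case (3)).

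Next I would observe that case (1) for a factor already gives the conclusion up to adjusting: if $A_i \leq g G_0 g^{-1}$, then since $A_i$ is a direct factor and normal in $G$, it is a union of conjugates of itself; but more to the point, a normal subgroup contained in a conjugate of $G_0$ is contained in $G_0$ itself (as $G_0$ is a retract, or by looking at the action: a normal elliptic subgroup fixes a vertex hence, being normal, fixes the whole tree pointwise, so lies in $\bigcap_g gG_0g^{-1} = \overline{\langle\mbox{Link}(v)\rangle}^{G}$... here one needs care). The hard part, and the main obstacle, is showing that both factors cannot simultaneously "see" the vertex $v$ nontrivially — i.e. ruling out the configuration where $A_1$ is conjugate into $G_0$ but not into $A$, while $A_2$ also acts non-trivially. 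Here is the engine: the stable letter $v$, or rather some element realising the HNN structure, must be expressible via the two factors; write the projection $f: G_\Gamma \to G_0$ with kernel $K = \llangle v\rrangle$. Then $G = A_1 \times A_2$ forces a compatible decomposition, and since $v$ acts hyperbolically on $T$ while every element of $G_0$ is elliptic, the hyperbolic element $v$ must have non-trivial image in at most... no — $v \in G$, so $v = a_1 a_2$ with $a_i \in A_i$, and a product of two commuting elliptic elements is elliptic (Lemma \ref{lem: culler vogtmann}), hence at least one of $a_1, a_2$ is hyperbolic; say $a_1$ is hyperbolic, so $A_1$ is not conjugate into $G_0$, forcing $A_1$ into case (3). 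Then $A_1 \supseteq H_0^{(1)}$ normal in $A_1$, inside a conjugate of $A \leq G_{\tiny{\mbox{Star}(v)}}$; and $A_2 = C_G(A_1)$ centralises $H_0^{(1)}$ and in particular... I would then argue that $A_2$ must be elliptic (all of it), fixing a vertex, and being normal fixes $T$ pointwise, so $A_2 \leq \bigcap_g g G_0 g^{-1}$. Finally $\bigcap_g gG_0g^{-1}$, being normal and elliptic, is contained in every vertex stabiliser; using Lemma \ref{lem: standard subgroups isolated} and that $G_0$ is a retract one identifies this intersection with $\overline{\langle \mbox{Link}(v)\rangle} \leq G_{\tiny{\mbox{Star}(v)}}$, giving $A_2 \subseteq G_{\tiny{\mbox{Star}(v)}}$ as desired. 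Symmetrically if $a_2$ is the hyperbolic one.

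I expect the delicate point to be the identification of the pointwise stabiliser $\bigcap_{g\in G} g G_0 g^{-1}$ of $T$ with the standard subgroup $A = G_{\tiny{\mbox{Link}(v)}}$ (so that any normal elliptic subgroup lands in the star of $v$), and the verification that a non-trivial direct factor which is not conjugate into $G_0$ must be in case (3) rather than case (2) — both of which I would handle using that standard subgroups are retracts and isolated (Lemmas \ref{lem: Raags as completions}, \ref{lem: standard subgroups isolated}), torsion-freeness (Theorem \ref{prop: Raags torsionfree}), and the centraliser control from Lemma \ref{lem: centralisers}. The bookkeeping of which factor is hyperbolic and conjugating everything into a fixed copy of $A$ is routine but must be done carefully to land inside $G_{\tiny{\mbox{Star}(v)}}$ on the nose rather than a conjugate.
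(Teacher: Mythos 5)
Your strategy (run both direct factors through Theorem \ref{thm: RZ structure HNN} for the splitting $G=HNN(G_0,A,\mathrm{id})$ and exploit that they centralise each other) is genuinely different from the paper's, which never touches the subgroup structure theorem: the paper writes $v=a_1a_2$, uses $C_G(v)=\langle v\rangle\times G_{\tiny{\link(v)}}$ from Lemma \ref{lem: centralisers} to get $a_i=v^{e_i}a_i'$ with some $e_1\neq 0$, conjugates $a_1$ to minimal support by an element $t\in A_1$ (possible because $A_2$ centralises $a_1$), and applies Lemma \ref{lem: centralisers} once more to trap $A_2\leq C_G(a_1)$ inside $G_{\tiny{\mbox{Star}(v)}}$. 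Your route, however, breaks at its first reduction: the exclusion of case (2) of Theorem \ref{thm: RZ structure HNN} for the factors. It is false that a non-trivial subgroup of a pro-$\C$ RAAG cannot centralise a non-abelian free pro-$p$ subgroup. Take $\Gamma$ the $4$-cycle on $a,b,c,d$, so $G=\langle a,c\rangle\times\langle b,d\rangle$ with $\langle a,c\rangle\cong \widehat\Z_{\C}\amalg\widehat\Z_{\C}$, and take $v=a$. The factor $A_1=\langle a,c\rangle$ is centralised by the non-trivial factor $A_2=\langle b,d\rangle$; it is not conjugate into $G_0=G_{\Gamma\smallsetminus\{a\}}$ (the stable letter $a$ is hyperbolic), and it is not as in case (3) (here $A=\langle b,d\rangle$ is normal in $G$ and meets $A_1$ trivially, so $H_0$ would be trivial and $A_1$ would be metacyclic, which it is not). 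Hence $A_1$ genuinely falls into case (2), and your subsequent case analysis, which assumes only cases (1) and (3) occur, does not cover all configurations.

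Even setting this aside, the step where you conclude that the complementary factor $A_2$ ``must be elliptic (all of it)'' is asserted rather than proved, and it is exactly where the content lies: one has to show that a subgroup centralising a factor that acts sufficiently hyperbolically lands in the kernel of the action, i.e.\ in $\bigcap_{g}gAg^{-1}\leq G_{\tiny{\link(v)}}$. In the $4$-cycle example this is what happens ($A_2=\langle b,d\rangle=A$), so the approach is likely salvageable with a careful argument about commuting hyperbolic elements and their axes, but as written the two key reductions --- excluding case (2), and forcing ellipticity of the other factor --- are respectively wrong and missing. The paper's two-line use of Lemma \ref{lem: centralisers} and minimal supports sidesteps both issues.
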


\begin{proof} Let $v$ be a canonical generator. Since $\alpha(v)=\{v\}$, by Lemma \ref{lem: centralisers} we have that $C_G(v) = G_{\tiny{\mbox{Star}(v)}}=\langle v\rangle \times G_{\tiny{\link(v)}}$.

Suppose that $v= a_1 \cdot a_2$ where $a_i\in A_i$, $i=1,2$. Since $C_G(v)= C_{A_1}(a_1) \times C_{A_2}(a_2)$ and $a_i\in C_{A_i}(v)$, from the description of the centraliser $C_G(v)$, we deduce that $a_i = v^{e_i} a_i'$ for $e_i\in \mathbb Z_{\pi(\C)}$ and $a_i' \in G_{\tiny{\link(v)}}$. Since $v = a_1 \cdot a_2$, we have that $e_i \ne 0$ for either $i=1$ or $i=2$; without loss of generality assume $e_1\neq 0$. Let $t$ be an element such that $t^{-1}a_1t$ has minimal support among its conjugates, we can assume $t\in A_1$ because $A_2\subseteq C_G(a_1)$. Applying Lemma \ref{lem: centralisers} we have
$$
tA_2t^{-1}=A_2\subseteq C_G(a_1)=tC_G(t^{-1}a_1t)t^{-1} \subseteq t\big(G_{\alpha(t^{-1}a_1t)}\times G_{\tiny{\link(t^{-1}a_1t)}}\big)t^{-1}.
$$
Notice that by Lemma \ref{lem: minimal standard subgroups} $\alpha(t^{-1}a_1t)\subseteq \alpha(a_1)\subseteq \mbox{Star}(v)$ and, since $v\in \alpha(t^{-1}a_1t)$, the definition of link implies that $\link(t^{-1}a_1t)\subseteq \mbox{Star}(v)$. Overall, we conclude that $A_2\subseteq G_{\tiny{\mbox{Star}(v)}}$.

\end{proof}

\begin{theorem}[Direct product decomposition]\label{thm: directly decomposable}
Let $G_\Gamma$ be a pro-$\mathcal{C}$ RAAG. Then $G_\Gamma$ has a non-trivial direct product decomposition if and only if $\Gamma$ is a join (i.e. there is $\varnothing\ne \Delta\lneq \Gamma$ such that for each $v\in \Delta$ and each $w\in \Gamma \smallsetminus \Delta$, $v,w$ are adjacent). In particular, each factor in a direct product decomposition of $G_\Gamma$ is a standard subgroup.
\end{theorem}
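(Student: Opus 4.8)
The plan is to prove both implications, with the backward direction ($\Gamma$ a join $\Rightarrow$ direct product) being essentially immediate and the forward direction requiring the structural lemmas just established.

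For the easy direction, suppose $\Gamma$ is a join, so $V(\Gamma) = V_1 \sqcup V_2$ with every vertex of $V_1$ adjacent to every vertex of $V_2$. Then the pro-$\C$ presentation in Definition \ref{defn: pro-C RAAG} immediately factors: the relations are precisely the commuting relations within $V_1$, within $V_2$, and all cross-commuting relations between $V_1$ and $V_2$. Hence $G_\Gamma = G_{\Gamma_1} \times G_{\Gamma_2}$ where $\Gamma_i$ is the full subgraph on $V_i$, and since $V_i \neq \varnothing$ both factors are non-trivial. (One can also phrase this via the universal property of the pro-$\C$ fundamental group / direct product, but the presentation argument is cleanest.)

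For the forward direction, suppose $G_\Gamma = A_1 \times A_2$ with both $A_i$ non-trivial. First I would use Lemma \ref{direct product}: for each canonical generator $v$, at least one factor $A_i$ lies inside $G_{\text{Star}(v)}$. This partitions $V(\Gamma)$ into two sets $V_1 = \{v : A_1 \subseteq G_{\text{Star}(v)}\}$ and $V_2 = \{v : A_2 \subseteq G_{\text{Star}(v)}\}$ (a vertex landing in both is harmless; assign it to $V_1$ say). I claim both $V_i$ are non-empty: if, say, $V_2 = \varnothing$, then $A_1 \subseteq G_{\text{Star}(v)}$ for all $v$; projecting via the retraction $\mathrm{pr}_{\text{Star}(v)}$ and using $A_1 \times A_2 = G_\Gamma$, one shows $A_2$ must be central, but pro-$\C$ RAAGs associated to a graph with at least two vertices have trivial or standard centre (the centre is $G_Z$ where $Z$ is the set of vertices adjacent to all others) — here one needs that $A_2$ non-trivial forces a non-central situation unless $\Gamma$ is already a join, closing the argument. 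The key remaining step is then: if $v \in V_1$ and $w \in V_2$, then $v$ and $w$ are adjacent. Indeed $A_2 \subseteq G_{\text{Star}(w)}$ means $A_2$ commutes with $w$; and since the canonical generator $v$, viewed in $G_\Gamma = A_1 \times A_2$, has a component in $A_2$ (this is where Lemma \ref{direct product}'s proof technique — writing $v = a_1 a_2$ — is reused, together with the description of $C_G(v) = \langle v \rangle \times G_{\link(v)}$ from Lemma \ref{lem: centralisers}), one deduces $w$ commutes with a conjugate-minimal representative forcing $w \in \link(v) \cup \alpha(v) = \text{Star}(v)$, hence $w$ adjacent to $v$. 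Thus $\Gamma$ is the join $\Gamma_1 * \Gamma_2$.

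Finally, for the "in particular" clause: once $\Gamma = \Gamma_1 * \Gamma_2$ is established with $G_\Gamma = G_{\Gamma_1} \times G_{\Gamma_2}$, an arbitrary direct factor decomposition $A_1 \times A_2$ is matched against this standard one by the argument above — the partition $V_1, V_2$ it produces exhibits each $A_i$ as sandwiched between standard subgroups, and using that standard subgroups are retracts (Lemma \ref{lem: Raags as completions}(3)) together with the uniqueness properties of direct decompositions of the torsion-free pro-$\C$ group $G_\Gamma$, one concludes $A_i = G_{\Gamma_i}$ up to reindexing. I expect the main obstacle to be the non-emptiness of both $V_i$ (equivalently, ruling out a "trivial" split where one factor is forced to be central): this requires a clean statement about the centre of a pro-$\C$ RAAG, which should follow from Lemma \ref{lem: centralisers} applied to each generator but deserves to be isolated as a preliminary observation.
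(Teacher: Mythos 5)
Your backward direction and your reliance on Lemma \ref{direct product} match the paper, but your endgame for the forward direction diverges: the paper never tries to show directly that every $v\in V_1$ is adjacent to every $w\in V_2$. Instead it forms the standard subgroups $G_{\Delta_i}=\bigcap_{v\in\Gamma_i}G_{\mbox{\scriptsize Star}(v)}\supseteq A_i$, observes that a canonical generator in $\Delta_1\cap\Delta_2$ would be central (giving a join at once), and then runs a case analysis on the partition of $V(\Gamma)$ into $\Gamma_2\cap\Delta_1$, $\Gamma_1\cap\Delta_2$ and $(\Gamma_1\cap\Delta_1)\cup(\Gamma_2\cap\Delta_2)$: if at least two of these are non-empty they already exhibit a join, and if only the last is non-empty then each $\Gamma_i=\Delta_i$ is complete and $G$ is abelian. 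That route sidesteps exactly the two places where your sketch is thinnest.

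Concretely, the gaps are these. \emph{(1) Non-emptiness of both $V_i$:} if $V_2=\varnothing$ then $A_1\subseteq C_G(v)=G_{\mbox{\scriptsize Star}(v)}$ for all $v$, so it is $A_1$ (not $A_2$, as you wrote) that is central; to close this case you need that $Z(G_\Gamma)$ is the standard subgroup $G_Z$ on the set $Z$ of vertices adjacent to all others (via Lemma \ref{lem: centralisers} and Lemma \ref{lem: Raags as completions}(4)), so that $Z\neq\varnothing$ forces a join. This is fixable but is only gestured at in your text. \emph{(2) The adjacency step:} you assert that the canonical generator $v\in V_1$ ``has a component in $A_2$'' whose support contains $v$. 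Writing $v=a_1a_2$ with $a_i=v^{e_i}a_i'$, $a_i'\in G_{\link(v)}$, the computation inside Lemma \ref{direct product} gives $e_1\neq 0\Rightarrow A_2\subseteq G_{\mbox{\scriptsize Star}(v)}$; so you may conclude $e_2=1$ (hence $v\in\alpha(a_2)$, after which the minimal-support argument with Lemmas \ref{lem: centralisers} and \ref{lem: minimal standard subgroups} does give $w\in\mbox{Star}(v)$) only after separately disposing of the case $A_2\subseteq G_{\mbox{\scriptsize Star}(v)}$, where $G=A_1A_2\subseteq G_{\mbox{\scriptsize Star}(v)}$ makes $v$ adjacent to everything anyway. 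As written, the needed non-vanishing is asserted rather than derived. \emph{(3) The ``in particular'' clause:} your appeal to ``uniqueness properties of direct decompositions'' of $G_\Gamma$ is not available --- already $\mathbb{Z}_p\times\mathbb{Z}_p$ admits direct decompositions whose factors are not standard subgroups --- and the paper's own proof only extracts $A_i\leq G_{\Delta_i}$ with $\Delta_1,\Delta_2$ disjoint; this part of your argument does not go through as stated.
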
 

\begin{proof}
The analogous result for abstract RAAGs is classical (see for example \cite[Corollary 2.15]{Koberda}). From the abstract result and Lemma \ref{lem: Raags as completions}, it is straightforward that whenever $\Gamma$ is a join, then $G_\Gamma$ splits as a direct product.

We now want to prove the converse implication. By Lemma \ref{direct product} for each canonical generator $v$, at least one among $A_1$ or $A_2$ is contained in $G_{\tiny{\mbox{Star}(v)}}$.

Let $\Gamma_{1} \subseteq V(\Gamma)$ be the set of canonical generators $v$ such that $A_1 < \mbox{Star}(v)$ and $\Gamma_2=\Gamma\smallsetminus \Gamma_1$. Then, for each canonical generator $v\in \Gamma_2$, since by definition of $\Gamma_2$ we have that $A_1 \not< \mbox{Star}(v)$, by Lemma \ref{direct product} again we conclude that $A_2 \leq G_{\tiny{\mbox{Star}(v)}}$.

For $i=1,2$ define $\Delta_i\subseteq \Gamma$ such that $G_{\Delta_i} = \bigcap_{v\in \Gamma_i} G_{\tiny{\mbox{Star}}(v)}$; by Lemma \ref{lem: Raags as completions} $G_{\Delta_i}$ is a standard subgroup and by definition it contains $A_i$ and each $v\in \Gamma_i$ is connected to each $w\in \Delta_i$. In particular, $\Delta_i$ are non-empty graphs. Notice that if there is a canonical generator $w \in \Delta_1 \cap \Delta_2$, then $w$ is by definition in the star of each vertex in $\Gamma_i$ and so $\Gamma_1, \Gamma_2 < \mbox{Star}(w)$. Hence such a canonical generator $w\in \Delta_1 \cap \Delta_2$ would be central and $\Gamma$ would decompose as a join. For this reason we can assume that $G_{\Delta_1}$ and $G_{\Delta_2}$ are disjoint and since $A_1$ and $A_2$ generate $G$, so do $G_{\Delta_1}$ and $G_{\Delta_2}$.

Hence, we can decompose $V(\Gamma)$ as the disjoint union of the (possibly empty) sets $\Gamma_2\cap \Delta_1$, $\Gamma_1 \cap \Delta_2$ and $\Lambda = (\Gamma_1 \cap \Delta_1) \cup (\Gamma_2 \cap \Delta_2)$.

Since $\Delta_i$ is non-empty for $i=1,2$, then either $\Lambda \ne \emptyset$ or $\Gamma_2\cap \Delta_1$ and $ \Gamma_1 \cap \Delta_2$ are non-empty. If at least two of the sets are non-empty, then they define a join, because each vertex in a set is connected to each vertex in the other set, because each element in $\Gamma_i$ is connected to each element in $\Delta_i$ for $i=1,2$.  

We are left to consider the case when only $\Lambda$ is non-empty so that $\Lambda=V(\Gamma)$. In this case, each vertex in $\Gamma_i$ is also in $\Delta_i$, and in particular, they are connected to each other. It follows that $\Gamma_i \cap \Delta_i = \Gamma_i = \Delta_i$ is a complete graph for $i=1,2$. Since $A_i \leq G_{\Delta_i}$ and $G_{\Delta_i}$ is abelian, so is $A_i$. Hence $G={A_1}\times A_2$ is abelian and $\Gamma$ is a complete graph and a join.
\end{proof}

These results are in line with other properties of pro-$\C$ RAAGs that can be recognized from the abstract graph. For example, abstract RAAGs split as a free product if and only if the underlying graph is disconnected, and Wilkes and Kropholler proved that the same is true for profinite RAAGs in \cite{Wilkes}. Similarly, both abstract and pro-$p$ RAAGs are coherent if and only if the underlying graph is chordal, see \cite{SnopceZalesskii}.

\section{Centralisers of an element}

In this section, we describe explicitly the structure of the centralizer of an element in a pro-$\C$ RAAG. In a free pro-$p$ group, centralisers of elements are cyclic. However, in the pro-$\mathcal{C}$ case, the situation is substantially different as the centraliser of an element does not need to be cyclic. Indeed, for example, the projective group $\Z_3 \rtimes \Z_2$, with the generator of $\Z_2$, say $a$, acting on $\Z_3$ by inversion, embeds in a free profinite group, so the centralizer of $a^2$ contains this projective group. 

\begin{theorem}[Description of centralisers] \label{thm: centralisers structure}
Let $G=G_\Gamma$ be a pro-$\C$ RAAG and let $g_0\in G$. Then there is an element $g$ in the conjugacy class of $g_0$ such that its centraliser is of the form
$$C_G(g)=H_1 \times \cdots \times H_s \times \overline{\langle \link(g)\rangle }$$
where:
\begin{enumerate}
    \item $\alpha(H_i),~ \alpha(H_j),~\link(g)$ are all disjoint for $i\neq j$;
    \item $G_{\alpha(g)}=G_{\alpha(H_1)}\times \cdots \times G_{\alpha(H_s)}$;
    \item $H_i$ are projective  pro-$\C$ groups;
    \item if $G$ is pro-$p$, $H_i=\overline{\langle h_i \rangle}$ \; and $g= h_1^{k_1} \cdots h_s^{k_s}$, for some $k_i\in \mathbb Z_p$.
\end{enumerate}
\end{theorem}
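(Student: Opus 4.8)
The plan is to induct on $|V(\Gamma)|$, mimicking the structure of the proofs of Theorems \ref{thm: solvable subgroups are metabelian} and \ref{thm: two generated subgroups}, but keeping careful track of the supports. Fix a representative $g$ of the conjugacy class with minimal support among its conjugates, and set $\Delta = \alpha(g)$, which is a full subgraph. A first reduction: by Lemma \ref{lem: centralisers}, $C_G(g) \leq G_{\tiny{\link(g) \cup \alpha(g)}}$; since $G_{\tiny{\link(g)}}$ centralises all of $G_\Delta$ (link vertices are adjacent to every vertex of $\alpha(g)$) and $\overline{\langle \link(g)\rangle} = G_{\tiny{\link(g)}}$ is a retract of $G_\Gamma$ with $G_{\tiny{\link(g)\cup\alpha(g)}} = G_\Delta \times G_{\tiny{\link(g)}}$, we get $C_G(g) = C_{G_\Delta}(g) \times G_{\tiny{\link(g)}}$. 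Here I use that $g \in G_\Delta$ and that the link of $g$ inside $G_\Delta$ is empty by minimality of support (if some $w\in\Delta$ were adjacent to all of $\alpha(g)$ it would not lie in $\alpha(g)$). This isolates the core problem: describe $C_{G_\Delta}(g)$ where $g$ has full support in $G_\Delta$, i.e.\ reduce to the case $\alpha(g) = V(\Gamma)$ and $\link(g) = \varnothing$, and then the $H_i$ together with claims (1)--(3) must be produced from this.

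Next I would analyse the case $\alpha(g) = V(\Gamma)$. Pick any vertex $v \in V(\Gamma)$ and use the HNN splitting $G_\Gamma = HNN(G_{\Gamma\smallsetminus\{v\}}, G_{\tiny{\link(v)}}, id)$ from Remark \ref{rem: hyperbolic splitting}, which is proper by Corollary \ref{cor: proper_retraction}. Since $v \in \alpha(g)$ and $g$ has minimal support, Theorem \ref{thm: RZ structure HNN} tells us $g$ acts hyperbolically on the standard tree $T$, hence so does $\langle g\rangle$ and $C_G(g)$ acts on $T$ with no global fixed point; applying Theorem \ref{thm: RZ structure HNN} to $K = C_G(g)$ and using Lemma \ref{lem: standard subgroups isolated} (so that the normal kernel $H_0 \leq$ conjugate of $G_{\tiny{\link(v)}}$ is isolated in $C_G(g)$, forcing $H_0$ torsion-free), we land in case (3) with $C_G(g) = H_0 \rtimes P$, $P \cong \mathbb{Z}_\sigma \rtimes \mathbb{Z}_\rho$ a projective group. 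Since $H_0$ centralises $g$ and lies in a conjugate of $G_{\tiny{\link(v)}}$, one shows using minimality of support that actually $H_0$ is conjugate into the proper standard subgroup $G_{\tiny{\link(v)\cup\alpha(g)\smallsetminus\{v\}}}$ inside a smaller RAAG, so the inductive hypothesis describes $C_{?}(g)$-pieces; the direct product structure $G_{\alpha(g)} = G_{\alpha(H_1)}\times\cdots\times G_{\alpha(H_s)}$ and the disjointness of supports should fall out of the fact that $g$ is a product of commuting pieces with pairwise disjoint supports (this is exactly the abstract picture: in an abstract RAAG, $g$ with full support conjugate-minimal decomposes as a product of "block" elements living in the directly-indecomposable join-components, and its centraliser is the product of cyclic-by-... pieces). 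The projectivity in (3) is inherited since $H_0$ is projective (torsion-free, inside a conjugate of a standard subgroup which is itself a pro-$\C$ RAAG, hence projective by torsion-freeness + the fact that RAAGs have cohomological dimension equal to the clique number — or more directly, $H_i$ is an extension of a projective by $\mathbb{Z}_\sigma\rtimes\mathbb{Z}_\rho$, hence projective) and $P$ is projective.

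For the pro-$p$ refinement (4): when $G$ is pro-$p$, $\mathbb{Z}_\sigma\rtimes\mathbb{Z}_\rho$ degenerates to $\mathbb{Z}_p$ and the whole machinery collapses — $H_0$ is abelian pro-$p$, and by the argument in the proof of Theorem \ref{thm: solvable subgroups are metabelian} the action of $P$ on $H_0$ is trivial, so $C_G(g)$ is abelian pro-$p$, a free abelian pro-$p$ group of finite rank; it is then a standard fact (the pro-$p$ RAAG analogue, e.g.\ via \cite{Wilkes} or a direct argument using that centralisers in pro-$p$ RAAGs of minimal-support elements are the obvious ones) that $C_G(g) = \overline{\langle h_1\rangle}\times\cdots\times\overline{\langle h_s\rangle}\times G_{\tiny{\link(g)}}$ with the $h_i$ supported on the join-factors of $\alpha(g)$ and $g = h_1^{k_1}\cdots h_s^{k_s}$. \textbf{The main obstacle} I anticipate is bookkeeping the supports through the induction: showing that the pieces $H_i$ coming out of Theorem \ref{thm: RZ structure HNN} applied across different vertices $v$ assemble consistently into a product with $G_{\alpha(g)} = \prod_i G_{\alpha(H_i)}$ a genuine direct (join) decomposition, rather than just that each $C_G(g)$ is \emph{some} projective-by-projective group. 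Concretely, one must prove that a conjugate-minimal $g$ with full support is a product $g = g_1\cdots g_s$ of commuting elements whose supports are exactly the vertex sets of the directly-indecomposable factors of the join $\Gamma = \alpha(g)$ — this is where Theorem \ref{thm: directly decomposable} (direct factors are standard subgroups) and an induction on the number of join-factors enter, and it is the step requiring the most care to make rigorous in the profinite setting.
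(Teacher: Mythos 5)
Your opening reductions (choosing $g$ of minimal support, invoking Lemma \ref{lem: centralisers} to get $C_G(g)=C_{G_{\alpha(g)}}(g)\times G_{\tiny{\link(g)}}$, and then splitting along the direct factors of $G_{\alpha(g)}$) coincide with the paper's. However, the step you single out as the main obstacle is in fact the easy one: by Theorem \ref{thm: directly decomposable} the direct factors $G_1,\dots,G_s$ of $G_{\alpha(g)}$ are standard subgroups, so $g=g_1\cdots g_s$ with $g_i=\mbox{pr}_{G_i}(g)$, $C_{G_{\alpha(g)}}(g)=\prod_i C_{G_i}(g_i)$, and $\bigcup_i\alpha(g_i)=\alpha(g)$ is immediate because $g$ lies in the standard subgroup generated by $\bigcup_i\alpha(g_i)$. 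No delicate profinite bookkeeping is needed there.

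The genuine gap is in the directly indecomposable case. You apply Theorem \ref{thm: RZ structure HNN} to $K=C_G(g)$ and assert that case (3) holds, but you have not excluded case (2): a priori $C_G(g)$ could contain a free non-abelian pro-$p$ subgroup avoiding all conjugates of $G_{\Gamma\smallsetminus\{v\}}$, and this cannot be dismissed on general grounds before projectivity is known (the theorem only claims the $H_i$ are projective). Moreover, even granting case (3), $C_G(g)=H_0\rtimes P$ with $P$ projective does not make $C_G(g)$ projective unless $H_0=1$, and your plan to treat $H_0$ by induction inside a smaller RAAG does not visibly terminate in the claimed product structure. The paper replaces all of this by one direct argument: if $1\ne h\in C_G(g)$ lies in a conjugate of $G_{\tiny{\link(v)}}$, conjugate $h$ by some $t$ to minimal support, so that $\alpha(h^t)$ is a proper subset of $V(\Gamma)$; then $h^t\in C_G(g^t)$, Lemma \ref{lem: centralisers} applied to $h^t$ gives $\alpha(g^t)\subseteq\alpha(h^t)\cup\link(h^t)$, and since $V(\Gamma)=\alpha(g)\subseteq\alpha(g^t)$ this exhibits $\Gamma$ as the join of $\alpha(h^t)$ and $\link(h^t)$, contradicting indecomposability via Theorem \ref{thm: directly decomposable}. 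Hence $C_G(g)$ meets every edge stabiliser of the standard tree trivially, acts faithfully on the minimal subtree $T_g$, and is projective by \cite[Lemma 4.2.6]{RibesBook}; in the pro-$p$ case it is then free pro-$p$ with non-trivial centre, hence isomorphic to $\Z_p$, which yields item (4) without any appeal to external facts about pro-$p$ RAAGs. This trivial-intersection argument is the idea missing from your proposal.
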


\begin{proof}
We begin the proof with some reductions.

If $g$ is trivial, then $V(\Gamma)=\link(g)$ and the result holds trivially, so we further assume $g\neq 1$.

Among the conjugates of $g_0$, we choose an element $g$ of minimal support among its conjugates, so that by Lemma \ref{lem: centralisers} $C_G(g)$ is contained in the standard subgroup generated by $\link(g)\cup \alpha(g)$. Hence, we can assume that $V(\Gamma)=\link(g)\cup \alpha(g)$. In this case, we have from Theorem \ref{thm: directly decomposable} that $G=G_{\alpha(g)}\times G_{\tiny{\link(g)}}$.
Clearly $G_{\tiny{\link(g)}}\leq C_{G}(g)$, so it suffices studying the centraliser in the standard subgroup $G_{\alpha(g)}$ and then
$$C_{G}(g)=C_{G_{\alpha(g)}}(g) \times G_{\tiny{\link}(g)}.$$
We further assume that $\alpha(g)=V(\Gamma)$. If $G$ is decomposable as a direct product $G_\Gamma=G_1\times \cdots \times G_s$, then $g=g_1\times \dots \times g_s$ for $g_i\in G_i$, $i\in \{1,\ldots,s\}$, and the centraliser $C_{G}(g)$ decomposes as $C_{G}(g)=C_{G_1}(g_1)\times \dots \times C_{G_s}(g_s)$. Moreover, by Theorem \ref{thm: directly decomposable}, each $G_i$ is a standard subgroup. As $g$ was chosen to be an element of minimal support among its conjugates, each $g_i$ has also minimal support among its conjugates, so we have reduced the problem to studying centralisers when $G=G_{\alpha(g)}$ is directly indecomposable.

Our goal is to show that if $G=G_{\alpha(g)}$ is directly indecomposable, then $C_G(g)$ is a projective group.
Fix any vertex $v$ of $\Gamma$ and denote by $G_0=G_{\Gamma \smallsetminus \{v\}}$ and by $A=G_{\tiny{\link(v)}}$. Consider the decomposition as an HNN extension
$$G=HNN(G_0, A, id);$$
as described in Remark \ref{rem: hyperbolic splitting}, the action of $g$ on the standard pro-$\C$ tree $T$ associated with this splitting is hyperbolic.\\
We first claim that no nontrivial element $h\in C_G(g)$ is contained in a conjugate of $A$. Indeed, take any element $h\in C_G(g)$ and assume that $t\in G$ is an element such that $h^t$ has minimal support among the $G$-conjugates of $h$, so that by Lemma \ref{lem: minimal standard subgroups} we have $\alpha(h^t)\subseteq \Gamma \smallsetminus \{v\}$. Then $h^t\in C_G(g^t)$ and by Lemma \ref{lem: centralisers} we have $\alpha(g^t)\subseteq \alpha(h^t)\times \link(h^t)$. By Lemma \ref{lem: minimal standard subgroups}, $V(\Gamma)=\alpha(g)\subseteq \alpha(g^t)$, but then $G=G_{\alpha(h^t)}\times G_{\tiny{\link(h^t)}}$. As $G$ is directly indecomposable and $\alpha(h^t)$ is a proper subset of $V(G)$, $h$ must be trivial.

Let $T_g$ be the minimal $g$-invariant subtree of $g$. From the preceding paragraph we deduce that $C_G(g)$ acts faithfully on $T_g$ and so by \cite[Lemma 4.2.6]{RibesBook} is projective. 
In particular,  if  $G$ is a pro-$p$ group, then $C_G(g)$ must be isomorphic to $\Z_p$.
\end{proof}

In general, centralisers and normalisers of elements in pro-$\C$ groups do not need to coincide. Indeed, we already noticed that in a free profinite group, there are subgroups of the form $\mathbb{Z}_3 \rtimes \mathbb{Z}_2$ with $a$, the generator of $\mathbb{Z}_2$, acting by inversion on $\mathbb{Z}_3$. Then $a$ is in the normaliser of $\mathbb{Z}_3$, but it does not centralize it.

We next show that in the case of pro-$p$ groups, the situation is much tamer, namely centralisers and normalisers coincide; they are finitely generated and in fact, they are virtual retracts.

\begin{corollary}
    For each element $g$ of a pro-$p$ RAAG $G_\Gamma$ we have that $N_G(\langle g \rangle )=C_G(\langle g \rangle )$.
\end{corollary}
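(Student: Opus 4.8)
The plan is to reduce immediately to the case $\alpha(g) = V(\Gamma)$ and $G = G_{\alpha(g)}$ directly indecomposable, exactly as in the proof of Theorem \ref{thm: centralisers structure}: after passing to a conjugate $g$ of minimal support among its conjugates, Lemma \ref{lem: centralisers} gives $C_G(g) \le G_{\link(g)\cup \alpha(g)}$, Theorem \ref{thm: directly decomposable} splits off the $\link(g)$ factor, and a further direct decomposition of $G_{\alpha(g)}$ reduces us to the indecomposable case. Since $\link(g)$ is central in the whole ambient group, it is enough to prove $N_H(\langle g\rangle) = C_H(\langle g\rangle)$ when $H = G_{\alpha(g)}$ is a directly indecomposable pro-$p$ RAAG with $\alpha(g) = V(\Gamma)$; in that situation Theorem \ref{thm: centralisers structure}(4) tells us $C_H(g) = \overline{\langle h\rangle}\cong \Z_p$ for some element $h$ with $g \in \overline{\langle h\rangle}$, so $\langle g\rangle$ is open in this procyclic group and $C_H(\langle g\rangle) = C_H(g) = \overline{\langle h\rangle}$.

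Next I would analyse an arbitrary $x \in N_H(\langle g\rangle)$. Normalising $\overline{\langle g\rangle}$, and hence its isolator $\overline{\langle h\rangle} = C_H(g)$, conjugation by $x$ induces a continuous automorphism of $\overline{\langle h\rangle}\cong \Z_p$, i.e. $x h x^{-1} = h^{\lambda}$ for some $\lambda \in \Z_p^\times$, and correspondingly $x g x^{-1} = g^{\mu}$ for a unit $\mu \in \Z_p^\times$ (the same unit, since $g$ is a $\Z_p$-power of $h$). I want to show $\mu = 1$. The key tool is the hyperbolic HNN splitting $H = \mathrm{HNN}(H_0, A, \mathrm{id})$ from Remark \ref{rem: hyperbolic splitting} for a chosen vertex $v$, on whose standard pro-$\C$ tree $T$ the element $g$ acts hyperbolically (Remark \ref{rem: hyperbolic splitting}), hence so does $h$. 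An element conjugating $h$ to a power $h^{\mu}$ must send the axis/minimal subtree $T_h$ to $T_{h^{\mu}} = T_h$ and, comparing translation lengths (or using that in a pro-$p$ group acting on a pro-$p$ tree the translation length is multiplicative and the only unit fixing the combinatorial structure of the line is $\pm 1$, and $-1$ is excluded since $H$ is torsion-free by Theorem \ref{prop: Raags torsionfree} — a hyperbolic element cannot be inverted in a group acting on a pro-$p$ tree without producing an involution), forces $\mu = \pm 1$ and then $\mu = 1$. Thus $x$ centralises $g$, so $N_H(\langle g\rangle) \le C_H(\langle g\rangle)$; the reverse inclusion is trivial.

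The main obstacle is making the translation-length argument rigorous in the profinite setting: I need that a hyperbolic element of a pro-$p$ group acting on a pro-$p$ tree cannot be conjugate to a proper power or to an inverse of itself. For the inverse part, if $x g x^{-1} = g^{-1}$ then $x^2$ centralises $g$ and lies in $\overline{\langle h\rangle}$, while $x$ acts on the line $T_h$ reversing orientation, which produces a fixed point for $x$ and hence (after adjusting by an element of $\overline{\langle h\rangle}$) an element of finite order inverting $h$, contradicting torsion-freeness. For the proper-power part, $xgx^{-1}=g^\mu$ with $|\mu|\ne 1$ would, passing to the abelianisation or to a suitable finite pro-$p$ quotient where $g$ survives, give $\bar g = \bar g^{\mu}$ with $\bar g \ne 1$ and $\mu \not\equiv 1 \pmod p$, which is impossible in a finite $p$-group; alternatively one uses that $\overline{\langle g\rangle}$ is its own isolator so $\mu\in\Z_p^\times$ already, and then the translation length $\ell(g)$ versus $\ell(g^\mu)=\mu\,\ell(g)$ comparison in the sense of \cite{RibesBook} forces $\mu$ to act trivially. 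I would look to \cite[Chapter 4]{RibesBook} for the precise statement that translation length is preserved by conjugation and multiplicative under powers, which closes the gap cleanly.
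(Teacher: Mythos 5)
Your approach diverges completely from the paper's, and it has a genuine gap at its central step. After reducing to the directly indecomposable case you must show that $xgx^{-1}=g^{\mu}$ with $\mu\in\Z_p^{\times}$ forces $\mu=1$, and neither of your two proposed mechanisms achieves this. The finite-quotient argument only yields $\mu\equiv 1\pmod p$: in a finite $p$-group the automorphism of $\langle \bar g\rangle$ induced by conjugation has $p$-power order and therefore lies in $1+p\,\Z/p^{n}\Z$, so values such as $\mu=1+p$ are not excluded (the abelianisation variant does force $\mu=1$, but only when $g$ survives in the abelianisation, which fails for $g\in[G,G]$). The translation-length argument is not available in the profinite setting in the form you need: a hyperbolic element of a pro-$p$ group acting on a pro-$p$ tree has no integer-valued translation length, the minimal invariant subtree of $g^{\mu}$ coincides with that of $g$ for every $\mu\in\Z_p^{\times}$ (they topologically generate the same procyclic group), and \cite{RibesBook} contains no statement that a hyperbolic element cannot be conjugate to $g^{1+p}$. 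The obstruction is real: $\Z_p\rtimes\Z_p$ with the generator of the second factor acting by multiplication by $1+p$ is a torsion-free, metabelian, $2$-generated pro-$p$ group, so no torsion or local argument can rule it out as a candidate for $\overline{\langle g,x\rangle}$.

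What actually excludes that group as a subgroup of a pro-$p$ RAAG is Theorem \ref{thm: two generated subgroups}, which your write-up never invokes, and which makes the whole reduction unnecessary. The paper's proof is two lines: for $x\in N_G(\langle g\rangle)$ the closed subgroup $\overline{\langle g,x\rangle}$ is procyclic-by-procyclic, hence solvable, hence not free non-abelian pro-$p$; by Theorem \ref{thm: two generated subgroups} it must then be free abelian, so $x\in C_G(g)$. Your handling of the case $\mu=-1$ via torsion-freeness and your reduction via Theorem \ref{thm: centralisers structure} are fine as far as they go, but they do not close the case $\mu\in 1+p\Z_p$, $\mu\neq 1$, which is exactly where the content of the corollary lies.
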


\begin{proof}
    Let $h\in N_G(\langle g \rangle )$. By Theorem \ref{thm: two generated subgroups}, the solvable subgroup $\langle g,h \rangle$ must be abelian for any $h\in N_G(\langle g \rangle)$, so $h\in C_G(g)$.
\end{proof}

Our next goal is to prove that centralisers of elements are virtual retracts in pro-$p$ groups. 

\begin{lemma}[Procyclic groups are virtual retracts] \label{lem: cyclic are vretracts}
Let $G$ be a pro-$p$ group acting without fixed points on a pro-$p$ tree $T$. Assume that $H$ is a procyclic subgroup, generated by a hyperbolic element $g$. Then $H$ is a virtual retract of $G$.
\end{lemma}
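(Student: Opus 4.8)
The plan is to use the standard structure theory of pro-$p$ groups acting on pro-$p$ trees, exploiting that a hyperbolic element acts on its minimal invariant subtree $T_g$ like a translation along a ``line''. First I would pass to the minimal $H$-invariant pro-$p$ subtree $D = T_g \subseteq T$, on which $g$ acts as a hyperbolic element. Since $H = \overline{\langle g \rangle}$ is procyclic and $g$ is hyperbolic, the action of $H$ on $D$ is free (no nontrivial power of $g$ is elliptic, by \cite[Corollary 2.5]{KZ23} or the analogous fact used already in Lemma \ref{lem: standard subgroups isolated}), so $D$ is a pro-$p$ tree with a free $H$-action and $H$ is the fundamental pro-$p$ group of the finite quotient graph of groups $H\backslash D$, which has trivial vertex and edge groups; in particular $D/H$ is a finite graph whose fundamental group surjects onto $H \cong \mathbb{Z}_p$, forcing $D/H$ to be (homotopy equivalent to) a single loop.

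Next I would produce an open subgroup $U \leq G$ of finite index containing $H$ in which $H$ is a retract. The idea is to look at the action of $G$ on $T$ and restrict attention to $D$: since $G$ acts on $T$ without fixed points, there is a unique minimal $G$-invariant subtree, and I want to relate the setwise $G$-stabiliser (or a suitable finite-index subgroup) of the ``axis'' $D$ to $H$. More concretely, I expect to argue as follows. Let $N$ be the subgroup of $G$ leaving $D$ invariant; $H \leq N$ and $N$ acts on $D$. Since $D$ has a free $H$-action with quotient a loop, $N/H_0$ (where $H_0$ is the kernel of the $N$-action on $D$, which is $1$ since the $G$-action is faithful on... — actually one uses Lemma \ref{lem: cyclic are vretracts}'s hypothesis only about $G$, so I would instead work inside $G$ directly) embeds into the automorphism group of the pro-$p$ tree $D$; the relevant point is that the group of automorphisms of $D$ preserving the orientation of the axis and acting freely is procyclic, so a finite-index open subgroup $V$ of $N$ acts on $D$ as a free procyclic group containing $H$ with finite index, hence $H$ is already of finite index in $V$ and thus open, and the projection $V \to H$ (well-defined because $V$ is procyclic, being torsion-free — pro-$p$ RAAGs are torsion-free by Theorem \ref{prop: Raags torsionfree}, and any pro-$p$ subgroup acting freely on $D$ with procyclic image is procyclic) splitting off $H$ as a direct factor gives a retraction $V \twoheadrightarrow H$.

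The main obstacle — and the step I would spend the most care on — is showing that $H$ sits inside an open subgroup of $G$ as a \emph{retract}, not merely with finite index: i.e. constructing the retraction homomorphism. The clean way is: take $U$ to be the stabiliser in $G$ of an orientation of $D$ together with enough of the combinatorial structure so that $U$ acts on $D$ as a group of ``translations''; then $U$ is an extension of a free-action-on-a-loop quotient, and since there is no torsion, $U$ is procyclic, hence $U \cong \mathbb{Z}_p$ contains $H \cong \mathbb{Z}_p$ with finite index, and any inclusion $\mathbb{Z}_p \hookrightarrow \mathbb{Z}_p$ of finite index is split by multiplication (a retraction exists because $\mathbb{Z}_p/H$ is finite cyclic of $p$-power order and $\mathbb{Z}_p$ is torsion-free — wait, this needs $H$ to be a direct summand, which it is not in $\mathbb{Z}_p$). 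Here is the correct fix I would use: one does not retract $G$ onto $H$ but onto an open subgroup; more precisely, since $U \cong \mathbb{Z}_p$ and $H$ has finite index $p^k$ in $U$, we have $H = U^{p^k} = p^k U$ and the map $U \to H$, $x \mapsto p^k x$ is an isomorphism, hence a retraction of $U$ onto $H$ precomposed with this isomorphism is not needed — instead $H$ \emph{is} a virtual retract by definition once we exhibit $U$ open in $G$ with $H$ a retract of $U$, and $H$ is a retract of $U \cong \mathbb{Z}_p$ via $x \mapsto$ (the unique element $y \in H$ with $y \equiv x$ appropriately) — concretely, fix a topological generator $t$ of $U$; then $g = t^{p^k}$ up to a unit, and the continuous homomorphism $U \to H$ sending $t \mapsto g$ is surjective with the property that its restriction to $H = \overline{\langle g\rangle}$ sends $g = t^{p^k} \mapsto g^{p^k}$, which is \emph{not} the identity. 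This genuine subtlety means the honest statement must be that $H$ is a virtual retract in the sense that some \emph{larger} procyclic open $U$ retracts onto $H$; since $H$ is a direct factor of no proper overgroup in $\mathbb{Z}_p$, the resolution is to instead take $U$ to be an open subgroup with $H$ \emph{equal} to an open subgroup that is itself retracted off — i.e. one uses that in the graph-of-groups decomposition of a suitable open $U \leq G$, $H$ appears as a vertex group, hence as a retract. This is the step I would need to nail down using Theorem \ref{thm: RZ structure HNN} / the structure of $U$ acting on $T$, and it is where I anticipate the real work.
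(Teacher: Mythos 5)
There is a genuine gap, and it is structural rather than a matter of detail. Your entire construction lives inside the setwise stabiliser $N$ of the axis $D=T_g$, and at best shows that $H$ has finite index in (a subgroup of) $N$. But $N$ is in general of \emph{infinite} index in $G$ — take $G$ free pro-$p$ of rank $2$ acting on the standard tree of its free decomposition and $g$ a hyperbolic element: then $\overline{\langle g\rangle}$ has infinite index in every open subgroup of $G$, and no analysis of the axis and its stabiliser can produce an \emph{open} subgroup $U\leq G$ retracting onto $H$, which is what the definition of virtual retract requires. (Incidentally, your long worry about retracting $\mathbb{Z}_p$ onto $p^k\mathbb{Z}_p$ is a red herring: if $H$ really were open in $G$ you could take $U=H$ and the identity as the retraction. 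The true obstacle is that $H$ is not open, so some genuinely global argument is needed.) Your closing suggestion — that $H$ appears as a vertex group of a graph-of-groups decomposition of a suitable open $U$ and is ``hence a retract'' — does not work either: vertex groups of graph-of-groups decompositions are not retracts in general, and in any case $H$, being generated by a hyperbolic element, would not be conjugate into a vertex group.

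The paper's proof supplies exactly the global ingredient you are missing. For a subgroup $K\leq G$ set $\widetilde{K}=\overline{\langle K\cap G_t\mid t\in V(T)\rangle}$. Since $g$ is hyperbolic, $\widetilde{H}=1$, and writing $H$ as the intersection of the open subgroups containing it one finds an open normal $U\leq G$ with $g\notin\widetilde{U}$. The quotient $U/\widetilde{U}$ is a \emph{free} pro-$p$ group (\cite[Corollary 3.6]{NewHorizons}), and the image of $H$ there is a nontrivial procyclic subgroup; the pro-$p$ Marshall Hall theorem (\cite[Theorem 9.1.19]{RZBook}) then makes this image a free factor — hence a retract — of an open subgroup of $U/\widetilde{U}$, and since $H\cap\widetilde{U}=1$ this retraction lifts to an open subgroup of $U$, hence of $G$. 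Both the passage to the free quotient $U/\widetilde{U}$ and the appeal to Marshall Hall are absent from your proposal, and without some substitute for them the argument cannot be completed.
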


\begin{proof}
For each subgroup $K$ of $G$, define 
$$\widetilde{K}=\overline{\langle K\cap G_t|t\in V(T)\rangle}$$
to be the subgroup generated by the intersections of $K$ with all vertex stabilisers. Notice that $\widetilde{H}=1$ since $H$ is procyclic generated by a hyperbolic element. As $H$ is closed, it is the intersection of all open subgroups $\{U_i,i\in I\}$ containing it and then also $\bigcap_{i\in I} \widetilde{U_i}=\widetilde{H}=1 $. This implies that there must be an open $U\trianglelefteq_o G$ such that $g\notin \widetilde{U}$. But $U/\widetilde{U}$ is a free pro-$p$ group by \cite[Corollary 3.6]{NewHorizons} hence by the profinite version of Marshall Hall Theorem (see \cite[Theorem 9.1.19]{RZBook}) the procyclic subgroup $H\widetilde{U}/\widetilde{U}$ of $U/\widetilde{U}$ is a free factor of a finite index subgroup of $U/\widetilde{U}$. As $H\cap \widetilde{U}$ is trivial, we can lift the retracts to $U$ and we have that $H\cong H\widetilde{U}/\widetilde{U}$ is a virtual retract of $U/\widetilde{U}$, hence a virtual retract of $G$ too.
\end{proof}

\begin{theorem}[Centralisers are virtual retracts]
Let $G=G_\Gamma$ be a pro-$p$ RAAG and let $H$ be the centraliser of an element $h$. Then $H$ is a virtual retract of $G$.
\end{theorem}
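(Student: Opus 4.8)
The plan is to reduce to the hyperbolic case handled by Lemma~\ref{lem: cyclic are vretracts} via the same sequence of reductions used in the proof of Theorem~\ref{thm: centralisers structure}, and then to build up the full centraliser as a retract out of retracts of the standard direct factors. First I would replace $h$ by a conjugate of minimal support among its conjugates; since conjugation is an automorphism of $G$, it suffices to prove the statement for this conjugate. By Lemma~\ref{lem: centralisers} and Theorem~\ref{thm: directly decomposable}, $G = G_{\alpha(h)} \times G_{\tiny{\link(h)}}$ and $C_G(h) = C_{G_{\alpha(h)}}(h) \times G_{\tiny{\link(h)}}$. The factor $G_{\tiny{\link(h)}}$ is a standard subgroup, hence a retract of $G$ by Lemma~\ref{lem: Raags as completions}(3); so it is enough to show that $C_{G_{\alpha(h)}}(h)$ is a virtual retract of $G_{\alpha(h)}$, because a direct product of a virtual retract and a retract of the corresponding factors is a virtual retract of the direct product. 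Thus we may assume $\alpha(h) = V(\Gamma)$.

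Next I would decompose $G$ as a direct product $G = G_1 \times \cdots \times G_s$ of directly indecomposable standard subgroups, which is possible by iterating Theorem~\ref{thm: directly decomposable}; each $G_i$ is a retract of $G$. Writing $h = h_1 \cdots h_s$ with $h_i \in G_i$, we have $C_G(h) = C_{G_1}(h_1) \times \cdots \times C_{G_s}(h_s)$, and each $h_i$ again has minimal support among its conjugates (as in the proof of Theorem~\ref{thm: centralisers structure}). So it suffices to prove the statement when $G = G_{\alpha(h)}$ is directly indecomposable. In that case, by Theorem~\ref{thm: centralisers structure} (pro-$p$ case), $C_G(h) = \overline{\langle h \rangle}$ is procyclic, generated by $h$ itself. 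I would then pick any vertex $v \in V(\Gamma)$ and use the HNN decomposition $G = HNN(G_{\Gamma \smallsetminus \{v\}}, G_{\tiny{\link(v)}}, id)$ from Remark~\ref{rem: hyperbolic splitting}; since $\alpha(h) = V(\Gamma)$ and $h$ has minimal support among its conjugates, Remark~\ref{rem: hyperbolic splitting} tells us $h$ acts hyperbolically on the associated standard pro-$p$ tree $T$. But $G$ acts on $T$; it need not act without global fixed points, so I would first pass to the minimal $G$-invariant subtree, on which the hyperbolic element $h$ still acts hyperbolically and $G$ acts without a global fixed point. Now Lemma~\ref{lem: cyclic are vretracts} applies directly and gives that $\overline{\langle h \rangle}$ is a virtual retract of $G$.

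Finally I would assemble the pieces. A virtual retract of a retract is a virtual retract of the ambient group (compose a retraction $G \to G_i$ with a virtual retraction $G_i \to C_{G_i}(h_i)$ after passing to a suitable finite-index subgroup), and a finite direct product of virtual retracts of the factors is a virtual retract of the product (take the product of the finite-index subgroups and the product of the retractions). Combining: $C_{G_i}(h_i)$ is a virtual retract of $G_i$, hence of $G_{\alpha(h)}$; their product $C_{G_{\alpha(h)}}(h)$ is a virtual retract of $G_{\alpha(h)}$, which is a retract of the original $G$; and multiplying by the retract $G_{\tiny{\link(h)}}$ gives that $C_G(h)$ is a virtual retract of $G$. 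Undoing the conjugation at the start completes the proof. The main obstacle I anticipate is not any single step but keeping the bookkeeping of ``virtual retract of a retract is a virtual retract'' and ``product of virtual retracts is a virtual retract'' clean — in the profinite setting one must be careful that the various finite-index subgroups can be chosen compatibly (e.g. normal, or intersected down) so that the retractions glue; and one must double-check that passing to the minimal $G$-invariant subtree of $T$ really does remove the global fixed point, which holds precisely because $h \in G$ is hyperbolic so no subtree fixed by $h$ — in particular no $G$-fixed point — exists.
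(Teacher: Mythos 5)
Your proposal follows essentially the same route as the paper: reduce to an element of minimal support, restrict to the retract $G_{\alpha(h)\cup\link(h)}$, split off $G_{\tiny{\link(h)}}$ and decompose into directly indecomposable standard factors, identify each $C_{G_i}(h_i)$ as procyclic via Theorem \ref{thm: centralisers structure}, and apply Lemma \ref{lem: cyclic are vretracts} with the HNN splitting of Remark \ref{rem: hyperbolic splitting}. The only cosmetic difference is that you spell out the bookkeeping (virtual retract of a retract, products of virtual retracts, hyperbolicity of $h$ forcing no global fixed point) that the paper leaves implicit; the argument is correct.
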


\begin{proof}
We can assume that $h$ has minimal support among its conjugates. By Theorem \ref{thm: centralisers structure}, $H$ is contained in the standard subgroup generated by $\alpha(h)\cup \link(h)$, which is a retract of $G$. As a virtual retract of a standard subgroup of $G$ can be lifted to a virtual retract of $G$, we restrict to the case that $V(\Gamma)=\alpha(h)\cup \link(h)$. Suppose then that $G=G_1\times \cdots \times G_k \times G_{\tiny{\link(h)}}$ is the direct product decomposition of the standard subgroup $G$. By Theorem \ref{thm: centralisers structure}, $H=H_1 \times \dots \times H_k \times G_{\tiny{\link(h)}}$, where $H_i$ is a procyclic subgroup of $G_i$ for each $i\in \{1,\ldots,k\}$. By Lemma \ref{lem: cyclic are vretracts} and Remark \ref{rem: hyperbolic splitting}, every $H_i$ is a virtual retract of $G_i$, hence the direct product of all of them is a virtual retract of $G$ and the result follows.
\end{proof}

\section{Abelian splittings of profinite RAAGs}

The main goal of this section is to describe when and how a pro-$\C$ RAAGs splits over a pro-$\C$ abelian group. We begin with two auxiliary lemmas.

\begin{lemma}[Non-elliptic standard subgroups]\label{lem: generators in different vertices}
Let $G=G_\Gamma$ be a pro-$\mathcal{C}$ RAAG associated with a connected graph $\Gamma$. Suppose that $G$ acts on a pro-$\mathcal{C}$ tree $T$ without a global fixed point, and that all canonical generators are elliptic. Then there exist two canonical generators $v,w\in V(\Gamma)$ such that $(v,w)\notin E(\Gamma)$ and $\langle v,w \rangle$ does not stabilize any vertex of $T$.
\end{lemma}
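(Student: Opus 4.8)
The plan is to argue by contradiction: assume the conclusion fails, so that for every pair of non-adjacent canonical generators $v,w$ the closed subgroup $\langle v,w\rangle$ stabilises some vertex of $T$, and then deduce that $G$ has a global fixed point, contradicting the hypothesis.

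The first step is to record that, since each canonical generator $v$ is elliptic, its fixed-point set $T^v\subseteq T$ is a non-empty pro-$\mathcal{C}$ subtree by \cite[Theorem 4.1.5]{RibesBook}. The crucial observation is then that $T^v\cap T^w\neq\varnothing$ for \emph{every} pair of canonical generators, not just the non-adjacent ones: if $v$ and $w$ are adjacent they commute, and being both elliptic, Lemma \ref{lem: culler vogtmann}(2) produces a vertex fixed by both; if $v$ and $w$ are non-adjacent, the vertex stabilised by $\langle v,w\rangle$ supplied by our assumption lies in $T^v\cap T^w$.

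The second step is to apply the Helly property (Lemma \ref{lem: Helly}) to the finite family $\{T^v\}_{v\in V(\Gamma)}$ of non-empty pro-$\mathcal{C}$ subtrees with pairwise non-empty intersections: this yields a point $p\in\bigcap_{v\in V(\Gamma)}T^v$. If $p$ happens to be an edge, I replace it by the vertex $d_0(p)$, which is fixed by everything fixing $p$; thus there is a vertex of $T$ fixed by every canonical generator. Since the canonical generators topologically generate $G$ and the stabiliser of a vertex is closed, $G$ fixes this vertex, contradicting the assumption that the action of $G$ on $T$ has no global fixed point. (The same argument shows the hypotheses of the lemma are vacuously unsatisfiable when $\Gamma$ is complete — apply Lemma \ref{lem: culler vogtmann}(2) to all of $V(\Gamma)$ directly — so the existence of a non-adjacent pair is not an issue.) This contradiction proves the lemma.

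I do not anticipate a real obstacle here: the three ingredients — fixed-point sets of elliptic elements are pro-$\mathcal{C}$ subtrees, commuting elliptic elements have a common fixed vertex (Lemma \ref{lem: culler vogtmann}), and the Helly property for pro-$\mathcal{C}$ trees (Lemma \ref{lem: Helly}) — are all already available. The only point requiring a little care is the passage from a fixed point (possibly an edge) to a fixed vertex, so that the conclusion matches the phrase ``without a global fixed point'' in the hypothesis.
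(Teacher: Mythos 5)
Your argument is correct and follows essentially the same route as the paper: both establish that the fixed-point subtrees $T^v$ intersect pairwise (using Lemma \ref{lem: culler vogtmann}(2) for adjacent generators), apply the Helly property (Lemma \ref{lem: Helly}) to obtain a global fixed point, and derive a contradiction. The only difference is organizational — the paper handles the non-adjacency of the resulting pair at the end rather than up front — and your extra care in passing from a fixed point to a fixed vertex is a harmless refinement.
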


\begin{proof}
Let $T^v$ be the subtree of fixed points of a canonical generator $v$. If, by contradiction, $T^v\cap T^w\neq \varnothing$ for each couple of canonical generators $v,w \in V(\Gamma)$, by Lemma \ref{lem: Helly}, there is a point contained in $\bigcap_{v\in V(\Gamma)} T^v$ fixed by all the generators and so fixed by $G$, contradicting the hypothesis. This implies that there are at least two vertices $v,w\in V(\Gamma)$ such that $\langle v,w \rangle$ does not stabilize any vertex of $T$. Notice that such vertices cannot be adjacent by Lemma \ref{lem: culler vogtmann} (2).
\end{proof}

\begin{lemma}[Hyperbolic generators] \label{lem: Groves Hull}
Let $G_\Gamma$ be a pro-$\mathcal{C}$ RAAG over a connected graph $\Gamma$ acting on a pro-$\mathcal{C}$ tree $T$ with abelian edge stabilisers. Suppose that a canonical generator $v\in G_\Gamma$ is hyperbolic, then:
\begin{enumerate}
   \item $\mbox{Star}(v)$ is a complete graph;
    \item either $V(\Gamma)= \mbox{Star}(v)$ or the set $S:=\{u\in \link(v)\mid \mbox{Star}(u) \mbox{ is not a complete graph}\}$ separates $\mbox{Star}(v)\smallsetminus S$ and $\Gamma \smallsetminus \mbox{Star}(v)$;
    \item the standard subgroup generated by $S$ stabilizes an edge. 
\end{enumerate}
\end{lemma}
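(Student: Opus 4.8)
The hypothesis is that $v \in V(\Gamma)$ is a canonical generator acting hyperbolically on a pro-$\mathcal{C}$ tree $T$ with abelian edge stabilisers, $\Gamma$ connected. I want to prove the three assertions about $\mathrm{Star}(v)$ and the separating set $S$. The plan is to work throughout inside the group $C_G(v) = G_{\mathrm{Star}(v)} = \langle v\rangle \times G_{\mathrm{Link}(v)}$ (Lemma \ref{lem: centralisers}), exploiting that everything in $\mathrm{Link}(v)$ commutes with the hyperbolic element $v$, and to use Lemma \ref{lem: culler vogtmann}(1): any element normalising $\langle v\rangle$ leaves the minimal invariant subtree $T_v$ invariant. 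Since $v$ is hyperbolic, $T_v$ is a ``line'' type object (the union of axes), and $C_G(v)$ acts on it; the kernel of this action will be forced to fix an edge, which is where the abelian-edge-stabiliser hypothesis bites.

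**Proof of (1).** First I would show $\mathrm{Star}(v)$ is complete. Suppose not: then there are $a, b \in \mathrm{Link}(v)$ with $(a,b)\notin E(\Gamma)$. Both $a$ and $b$ commute with $v$, so by Lemma \ref{lem: culler vogtmann}(1) each of $a$, $b$ leaves $T_v$ invariant and the whole group $\langle v, a, b\rangle$ acts on $T_v$. Now $a$ and $b$ each commute with the hyperbolic element $v$; I claim this forces $a$ and $b$ to be elliptic on $T_v$ with axes compatible with that of $v$, and in fact, using that an element commuting with a hyperbolic element must either share its axis or... — more cleanly: the action of $C_G(v)$ on $T_v$ has the property that the quotient $C_G(v)/\ker$ is a subgroup of $\widehat{\mathbb{Z}}_{\mathcal{C}}$ (it acts on a pro-$\mathcal{C}$ line freely-modulo-inversions), hence is abelian, hence every commutator of $C_G(v)$ lies in $\ker$, i.e. fixes an edge of $T_v$. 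Applying this to the RAAG $G_{\langle v,a,b\rangle}$, which contains the non-abelian free pro-$\mathcal{C}$-ish subgroup $\langle a, b\rangle$ (as $(a,b)\notin E$, $\langle a,b\rangle = G_{\{a,b\}}$ is a free product $\overline{\langle a\rangle}\amalg\overline{\langle b\rangle}$), gives a contradiction: a non-procyclic group cannot embed into the kernel structure... Actually the cleanest contradiction: $[a,b]\ne 1$ and $[a,b]$ must fix an edge $e$ of $T_v$; but $[a,b]$ lies in $G_{\{a,b\}}$, a free pro-$\mathcal{C}$ product, and it would then have to be conjugate into an edge stabiliser which is abelian — I need to see this forces $[a,b]=1$. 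So the main point of (1): the non-abelian standard subgroup $G_{\{a,b\}}$ would be forced (via commuting with the hyperbolic $v$) to act on $T_v$ with abelian image in $\widehat{\mathbb{Z}}_{\mathcal{C}}\rtimes C_2$, so $[a,b]$ fixes an edge, but edge stabilisers are abelian and $G_{\{a,b\}}\cap(\text{edge stab})$ is a standard-type subgroup that cannot be abelian and contain $[a,b]\ne1$... The real obstruction I expect here is making this ``element commuting with a hyperbolic element acts with abelian image on $T_v$'' argument fully rigorous in the pro-$\mathcal{C}$ setting; I would cite the structure of pro-$\mathcal{C}$ groups acting on pro-$\mathcal{C}$ trees of ``linear'' type.

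**Proof of (2) and (3).** For (2), assume $V(\Gamma)\ne\mathrm{Star}(v)$ and let $S = \{u\in\mathrm{Link}(v) : \mathrm{Star}(u)\text{ not complete}\}$. I must show $S$ separates $\mathrm{Star}(v)\smallsetminus S$ from $\Gamma\smallsetminus\mathrm{Star}(v)$ in $\Gamma$, i.e. every edge-path from a vertex of $\mathrm{Star}(v)\smallsetminus S$ to a vertex outside $\mathrm{Star}(v)$ passes through $S$. Equivalently: any $u\in\mathrm{Link}(v)\smallsetminus S$ that is adjacent to some $w\notin\mathrm{Star}(v)$ leads to a contradiction. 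Fix such $u$: then $\mathrm{Star}(u)$ is complete (since $u\notin S$), $v\in\mathrm{Star}(u)$, and $w\in\mathrm{Star}(u)$, so $v, w$ both lie in the complete graph $\mathrm{Star}(u)$, hence $(v,w)\in E(\Gamma)$, so $w\in\mathrm{Link}(v)\subseteq\mathrm{Star}(v)$ — contradiction. That handles the ``direct edge'' case; for longer paths one notes a separating set only needs to block one-edge steps out of $\mathrm{Star}(v)\smallsetminus S$ combined with the observation that $\mathrm{Star}(v)\smallsetminus S$'s neighbourhood stays in $\mathrm{Star}(v)$. So (2) is essentially graph-combinatorics once (1) is in hand. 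For (3): I would show $G_S$ is elliptic on $T$ by showing it fixes an edge. Each $u\in S$ satisfies $[u,v]=1$ with $v$ hyperbolic, so $u$ is elliptic and (by Lemma \ref{lem: culler vogtmann}(1)) leaves $T_v$ invariant; moreover the elements of $S$ pairwise commute (they all lie in $\mathrm{Link}(v)$ and — wait, they need not be adjacent to each other). Instead: $\langle S, v\rangle \le C_G(v)$ acts on $T_v$; as argued in (1), the kernel $N$ of $C_G(v)\to\mathrm{Aut}(T_v)$ contains $[C_G(v),C_G(v)]$ and fixes an edge $e$. I want $G_S \le N$, equivalently $G_S$ acts trivially on $T_v$. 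Since $v$ is hyperbolic on $T_v$ and each $u\in S$ commutes with $v$, $u$ cannot translate along the axis nontrivially without... hmm — the cleanest route: $G_S$ is generated by $S$, each $s\in S$ commutes with the hyperbolic $v$, and an elliptic element commuting with a hyperbolic element of a pro-$\mathcal{C}$ line-action must act trivially on $T_v$ (it fixes $T_v^s\supseteq$ axis of $v$, and since $T_v$ is the minimal $v$-invariant subtree = ``pro-$\mathcal{C}$ line'', fixing the axis means fixing $T_v$). Hence every generator of $G_S$ fixes $T_v$ pointwise, so $G_S$ fixes any edge of $T_v$, proving (3). The main obstacle across all three parts is the same: rigorously controlling the action of $C_G(v)$ on the minimal invariant subtree $T_v$ of a hyperbolic element — specifically that this subtree behaves like a ``pro-$\mathcal{C}$ line'' so that centralisers of $v$ act with cyclic (hence abelian) image and elliptic elements commuting with $v$ act trivially. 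I would isolate this as a preliminary claim, proving it via \cite[Theorem 4.1.5, Corollary 4.1.9]{RibesBook} and the structure theory of abelian pro-$\mathcal{C}$ groups acting on pro-$\mathcal{C}$ trees, and then parts (1)--(3) follow as above.
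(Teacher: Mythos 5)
Your parts (2) and (3) follow the paper's route (the graph combinatorics in (2) is identical, and (3) does come down to elements of $S$ being elliptic and fixing $T_v$ pointwise), but part (1) — on which (2) and (3) rest — has a genuine gap, and it is exactly the step you flag yourself. Two problems. First, your preliminary claim that $C_G(v)$ acts on $T_v$ "like a pro-$\C$ line", so that $C_G(v)/\ker$ embeds in $\widehat{\Z}_{\C}$, is not available: there is no axis in the pro-$\C$ setting, and the structure theory for groups acting on pro-$\C$ trees (Theorem \ref{thm: RZ structure HNN}, \cite[Theorem 7.1.7]{RibesBook}) applied to $C_G(v)$ has a second alternative — a free non-abelian pro-$p$ subgroup meeting all vertex stabilisers trivially — which you cannot exclude without already knowing that $G_{\tiny{\link(v)}}$ is abelian; as stated the argument is circular. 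Second, even granting that $[a,b]$ fixes an edge of $T_v$ with abelian stabiliser, this does not force $[a,b]=1$: a nontrivial element of the free pro-$\C$ product $\overline{\langle a\rangle}\amalg\overline{\langle b\rangle}$ can perfectly well lie in an abelian, even procyclic, subgroup. You acknowledge that you do not see how to close this step, and indeed it does not close as written.

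The paper's proof of (1) avoids both issues by applying the structure theorem only to the rank-two abelian subgroups $\langle v,w\rangle$ for $w\in\link(v)$, where the free-subgroup alternative is vacuous and the elliptic alternative is excluded because $v$ is hyperbolic: the kernel of the action of $\langle v,w\rangle$ on its minimal subtree has procyclic quotient, meets $\overline{\langle v\rangle}$ trivially (no power of a hyperbolic element is elliptic), and is nontrivial, so it contains some $g=ab$ with $1\neq b\in\overline{\langle w\rangle}$, and this $g$ fixes $T_v$ pointwise. Doing this for two non-adjacent $w_1,w_2\in\link(v)$ places $g_1=a_1b_1$ and $g_2=a_2b_2$ in a single abelian edge stabiliser of $T_v$ (which contains an edge since $v$ is hyperbolic), so $[g_1,g_2]=1$; retracting onto $G_{\{w_1,w_2\}}$ kills the $a_i$ and gives $[b_1,b_2]=1$, and Lemma \ref{lem: centralisers} then forces $w_1$ and $w_2$ to be adjacent. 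This pair-of-elements-plus-retraction trick is the key idea missing from your proposal. A smaller point in your (3): commuting with a hyperbolic element does not make $u$ elliptic; ellipticity of $u\in S$ comes from the contrapositive of Item (1) applied to $u$ (its star is not complete), and the pointwise fixing of $T_v$ comes from minimality — $T^u\cap T_v$ is a nonempty $v$-invariant subtree of $T_v$ — not from any axis.
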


\begin{proof}
\begin{enumerate} 
    \item If there exists a single vertex adjacent to $v$, then the result holds. Suppose then that there exist two distinct vertices $w_1,w_2\in \link(v)$.\\
    For each canonical generator $w$ commuting with $v$ we can restrict to the minimal subtree $T_{\langle v, w \rangle}$ on which the abelian subgroup ${\langle v, w \rangle}$ acts. By Theorems \ref{thm: RZ structure amalgam} and \ref{thm: RZ structure HNN}, the group $\langle v,w \rangle$ is a procyclic extension of the kernel of this action and since $\langle v,w \rangle$ is abelian of rank 2,  there exists an element $g=ab$ with $a \in {\langle v \rangle}\leq G$ and $b \in {\langle w \rangle}\leq G$ with $b\neq 1$ (as $v$ is hyperbolic) in the kernel of the action, i.e. $g$ fixes pointwise the minimal subtree $T_{\langle v, w \rangle}$.
    Pick now two elements $g_i=a_ib_i$ with $i\in \{1,2\}$ for $a_1,a_2\in \overline{\langle v \rangle}$, $b_1\in \overline{\langle w_1 \rangle}$, $b_2\in \overline{\langle w_2 \rangle}$ such that $b_1,b_2$ are not trivial and such that $g_1,g_2$ stabilize pointwise $T_v$. By hypothesis $g_1,g_2$ are contained in the abelian stabilisers of the edges of $T_v$. Let $K=\langle g_1,g_2\rangle$ and let $f$ be the retraction of $G$ onto the standard subgroup generated by $w_1,w_2$. The image $f(K)\leq G_{\{w_1,w_2\}}$ is an abelian subgroup that contains $b_1$ and $b_2$.
    The element $b_1$ is in the centraliser of $b_2$ and they are both with minimal support among their conjugates, so applying Lemma \ref{lem: centralisers} this can happen only if $w_1 \in \link(b_2)=\link(w_2)$, so $w_1,w_2$ are adjacent and $\mbox{Star}(v)$ is a complete graph.
    \item Suppose $V(\Gamma) \neq \mbox{Star}(v)$, as $\Gamma$ is connected we have that
    $$S=\{u\in \link(v)\mid Star(u) \mbox{ is not a complete graph}\}$$
    is non-empty. It is immediate to see that $S$ separates the subgraphs generated by $\mbox{Star}(v)\smallsetminus S$ and $\Gamma \smallsetminus \mbox{Star}(v)$ because, as $\link(v)$ is a complete graph by Item (1), each vertex in $\mbox{Star}(v)\smallsetminus S$ is connected only to vertices in $\mbox{Star}(v)$.
    \item By Lemma \ref{lem: culler vogtmann}(1), each vertex of $S$ fixes the subtree $T_v$, which contains at least an edge because $v$ is hyperbolic. By Item (1), the action on $T$ of any element of $S$ is elliptic, and hence $T_v$ is fixed pointwise by $S$.
\end{enumerate}
\end{proof}

We are now ready to prove the main Theorem.

\begin{theorem}[Graph characterisation of splittings over abelian edge groups] \label{thm: abelian splitting of RAAGs}
Let $G=G_\Gamma$ be a pro-$\mathcal{C}$ RAAG associated with a connected graph $\Gamma$. Then $G$ acts on a pro-$\mathcal{C}$ tree with abelian edge stabilisers without a global fixed point if and only if either $\Gamma$ is a complete graph or $\Gamma$ has a disconnecting complete graph.\\
In the second case, there exists a disconnecting complete graph whose standard subgroup is contained in one edge stabiliser of $T$. 
\end{theorem}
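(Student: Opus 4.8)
The plan is to prove both implications separately, using the HNN/amalgam splittings of pro-$\C$ RAAGs from Remark \ref{rem: hyperbolic splitting} together with the structural results on commuting elliptic elements (Lemma \ref{lem: culler vogtmann}), Helly's property (Lemma \ref{lem: Helly}), and the two auxiliary lemmas (Lemma \ref{lem: generators in different vertices} and Lemma \ref{lem: Groves Hull}) just established.

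For the ``if'' direction, suppose first that $\Gamma$ is a complete graph. Then $G_\Gamma$ is free abelian pro-$\C$ of rank $|V(\Gamma)|$; if $|V(\Gamma)|\geq 2$, pick a canonical generator $v$ and split $G_\Gamma = G_{\Gamma\smallsetminus\{v\}}\times\langle v\rangle$, which via the projection to $\langle v\rangle\cong\widehat{\Z}_\C$ gives an action on the standard tree of the HNN extension $HNN(G_{\Gamma\smallsetminus\{v\}},G_{\Gamma\smallsetminus\{v\}},\mathrm{id})$ with edge stabiliser the abelian group $G_{\Gamma\smallsetminus\{v\}}$ and no global fixed point (the stable letter $v$ is hyperbolic). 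If $|V(\Gamma)|=1$ the conclusion is vacuous or one interprets $G_\Gamma\cong\widehat{\Z}_\C$ acting on its Cayley tree. Now suppose $\Gamma$ has a disconnecting complete subgraph $\Delta$: write $\Gamma\smallsetminus\Delta = \Gamma_1\sqcup\Gamma_2$ with no edges between $\Gamma_1$ and $\Gamma_2$. Then $G_\Gamma = G_{\Gamma_1\cup\Delta}\amalg_{G_\Delta} G_{\Gamma_2\cup\Delta}$; since $G_\Delta$ is a standard subgroup, hence a retract of each factor by Lemma \ref{lem: Raags as completions}(3), Corollary \ref{cor: proper_retraction} shows this amalgam is proper, and the action on the associated standard pro-$\C$ tree has abelian (indeed free abelian) edge stabiliser $G_\Delta$ and no global fixed point because the amalgam is proper and non-degenerate. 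In both cases the disconnecting complete subgraph's standard subgroup is literally an edge stabiliser, giving the final sentence of the statement.

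For the ``only if'' direction, suppose $G=G_\Gamma$ acts on a pro-$\C$ tree $T$ with abelian edge stabilisers and no global fixed point, and assume $\Gamma$ is not complete; we must produce a disconnecting complete subgraph whose standard subgroup lies in an edge stabiliser. I would split into two cases according to whether some canonical generator acts hyperbolically. If some generator $v$ is hyperbolic, then Lemma \ref{lem: Groves Hull} does almost all the work: item (1) gives that $\mathrm{Star}(v)$ is complete, item (2) gives that the set $S=\{u\in\link(v)\mid \mathrm{Star}(u)\text{ is not complete}\}$ separates $\mathrm{Star}(v)\smallsetminus S$ from $\Gamma\smallsetminus\mathrm{Star}(v)$, and item (3) gives that $G_S$ stabilises an edge; since $S\subseteq\link(v)$ and $\link(v)$ is complete, $S$ is a complete subgraph, and (being non-empty when $V(\Gamma)\neq\mathrm{Star}(v)$, and noting that $V(\Gamma)=\mathrm{Star}(v)$ forces $\Gamma$ complete since $\link(v)$ is complete) it is the required disconnecting complete subgraph inside an edge stabiliser. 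If instead \emph{every} canonical generator is elliptic, then Lemma \ref{lem: generators in different vertices} produces non-adjacent $v,w$ with $\langle v,w\rangle$ not stabilising any vertex. Now I would pass to a canonical generator $u$ whose star is not complete (such $u$ exists: if every $\mathrm{Star}(u)$ were complete then, using connectivity of $\Gamma$, one shows $\Gamma$ itself is complete, contradiction) and argue as in the abstract case of \cite{Groves}: consider the geodesic $[T^{v'},T^{w'}]$-type configuration forced by the failure of Helly, and use that edge stabilisers along the way are abelian to locate a canonical generator, or more precisely a clique of generators, whose standard subgroup is elliptic and whose link separates the graph. The cleanest route is to fix an edge $e$ of $T$ and let $\Delta$ be the full subgraph on those canonical generators that fix $e$ (equivalently, the standard subgroup $G_{\Delta}$ contained in the abelian stabiliser $G_e$); since $G_e$ is abelian, $G_\Delta$ is abelian, so $\Delta$ is a complete subgraph by the structure of standard subgroups, and one then shows that some such $e$ can be chosen so that $\Delta$ disconnects $\Gamma$, because otherwise the connectedness of $\Gamma$ together with Lemma \ref{lem: culler vogtmann}(2) would force the action to have a global fixed point.

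The main obstacle, and the part needing the most care, is the elliptic case of the ``only if'' direction: showing that a well-chosen abelian edge stabiliser meets enough canonical generators to form a \emph{disconnecting} complete subgraph. The abstract proof (Groves--Hull) uses the explicit CAT(0) cube complex geometry and the behaviour of hyperplanes; in the profinite setting one must replace this with a compactness-and-Helly argument. Concretely I expect the following to be the crux: among all edges $e$ of $T$, consider the (closed, non-empty by Lemma \ref{lem: generators in different vertices}) collection of subtrees $T^u$ for canonical generators $u$; the failure of a common fixed point together with the pairwise-intersection analysis via Lemma \ref{lem: Helly} partitions $V(\Gamma)$ into the generators ``on one side'' and ``on the other side'' of a minimal separating configuration, and the separating set is exactly a set of mutually commuting (hence adjacent, hence forming a complete subgraph) generators all fixing a common edge. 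Making ``minimal separating configuration'' precise in the profinite tree — and verifying the separating set is non-empty and complete rather than merely a vertex stabiliser — is where the argument will require the most work, and one likely needs to induct on $|V(\Gamma)|$ and feed back into the HNN splitting $HNN(G_{\Gamma\smallsetminus\{v\}},G_{\link(v)},\mathrm{id})$ as in the proofs of Theorems \ref{thm: solvable subgroups are metabelian} and \ref{thm: two generated subgroups}.
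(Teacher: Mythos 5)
Your ``if'' direction and the hyperbolic case of the ``only if'' direction are correct and essentially identical to the paper's proof: the proper amalgam $G_{\Gamma_1\cup K}\amalg_{G_K}G_{\Gamma_2\cup K}$ (via Corollary \ref{cor: proper_retraction}) and the HNN splitting of $\mathbb{Z}_{\pi}^{n}$ handle existence, and Lemma \ref{lem: Groves Hull} disposes of a hyperbolic canonical generator exactly as you describe.

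The gap is the all-generators-elliptic case, which you correctly single out as the crux but do not resolve. Your proposed route --- take the canonical generators fixing an edge $e$, and claim that if no such set disconnects $\Gamma$ then connectedness together with Lemma \ref{lem: culler vogtmann}(2) forces a global fixed point --- does not go through: that lemma only gives a common fixed vertex for \emph{pairwise commuting} elliptic elements, and the canonical generators of $\Gamma$ do not pairwise commute, so no global fixed point can be extracted this way. The argument the paper actually runs is the following. Take the non-adjacent $v,w$ produced by Lemma \ref{lem: generators in different vertices}, vertices $t_v,t_w$ of $T$ fixed by them, and the geodesic $[t_v,t_w]$; one checks this geodesic contains an edge stabilised by neither $v$ nor $w$, and one lets $K$ be a \emph{maximal} complete subgraph of $\Gamma$ contained in the stabiliser $G_e$ of such an edge $e$ (possibly empty). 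If a path $p=(v,u_1,\dots,u_k,w)$ in $\Gamma$ avoided $K$, then Lemma \ref{lem: commutation of a path} yields vertices $t_0,\dots,t_{k+2}$ of $T$ whose connecting geodesics are each stabilised by one of $v,u_1,\dots,u_k,w$ and whose union is a subtree containing both $t_v$ and $t_w$, hence containing $e$; since $e$ is not fixed by $v$ or $w$, some $u_j$ lies in the abelian group $G_e$ together with every vertex of $K$. Theorem \ref{thm: centralisers structure} then forces $u_j$ to be adjacent to all of $K$, so $K\cup\{u_j\}$ is a complete subgraph inside $G_e$ and maximality gives $u_j\in K$, contradicting the choice of $p$ (and, when $K=\varnothing$, contradicting emptiness). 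This combination of the maximal choice of $K$ over edges of the geodesic, Lemma \ref{lem: commutation of a path}, and the centraliser description is the missing content; without it the elliptic case, and hence the theorem, is not proved.
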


\begin{proof}
The case when $\Gamma$ is a complete graph is clear: indeed, denoting by $\pi=\pi(\mathcal{C})$, the pro-$\mathcal{C}$ RAAG $G_\Gamma$ is isomorphic to $\mathbb{Z}_{\pi}^{n}$, that splits as an HNN-extension $HNN(\mathbb{Z}_{\pi}^{n-1},\mathbb{Z}_{\pi}^{n-1}, id)$.
Similarly, if there is a complete graph $K$ that disconnects $\Gamma$, i.e. $\Gamma \smallsetminus K = \Gamma_1 \cup \Gamma_2$ with $\Gamma_1,\Gamma_2$ disjoint subgraphs, then $G$ splits as
$$G_\Gamma= G_{\Gamma_1\cup K} \amalg_{G_K} G_{\Gamma_2 \cup K}$$
and so $G$ acts on the standard pro-$\mathcal{C}$ tree associated with this splitting.\\
Suppose now that $G$ acts on a pro-${\mathcal{C}}$ tree $T$ with abelian edge stabilisers. If there exists a hyperbolic canonical generator $v$ of $G$, by Lemma \ref{lem: Groves Hull}, we know that either $\Gamma$ is complete or the set $S:=\{u\in \link(v)~| ~\mbox{Star}(u) \mbox{ is not a complete graph}\}$ is a disconnecting complete graph contained in an edge stabiliser. We are left to the case when each canonical generator of $G$ is elliptic.

By Lemma \ref{lem: generators in different vertices}, there exist two vertices $v,w \in V(\Gamma)$ such that no vertex of $T$ is stabilized by both $v$ and $w$. As each canonical generator acts elliptically, let $t_v,t_w$ be two vertices of $T$ stabilized by $v$ and $w$ respectively. Let $S=[t_v,t_w]$ be the geodesic between these two vertices in $T$.\\
By Lemma \ref{lem: generators in different vertices}, $S$ contains at least one edge, and moreover there exists at least one edge of $S$ that is not stabilised either by $v$ or by $w$, as by collapsing the subtrees $S\cap T_v$ and $S\cap T_w$ to a point (noticing that we chose $v,w$ such that $T_v\cap T_w=\varnothing$), we would otherwise have $S$ to be disconnected.
Define $K$ as a maximal (by the number of vertices contained) complete subgraph of $\Gamma$ contained in an edge group of $S$ that is not stabilized by either $v$ or $w$, say $e\in E(T)$. If $K$ is empty, let $e$ be any edge of $S$, not stabilized by $v$ or $w$. It is important to notice that even if $S$ might contain infinitely many edges satisfying the properties, $\Gamma$ is finite hence $K$ is well defined. We claim that $K$ is a complete graph of $\Gamma$ that disconnects the vertices $v$ and $w$.\\
Suppose by contradiction that it is not, then we could find a finite path $p=(v,u_1,\ldots,u_k,w)$ in $\Gamma$, such that no vertex of $p$ is contained in $K$. By Lemma \ref{lem: commutation of a path}, there exist some vertices $t_1,\ldots,t_{k+2}$ such that $v$ stabilizes $t_1$, $u_i$ stabilizes the geodesic $S_i=[t_i,t_{i+1}]$ for $i=1,\ldots,k$, and $w$ stabilizes $[t_{k+1},t_{k+2}]$. Set $t_0=t_v$ and $T_{k+3}=t_w$. In this setting, $v$ stabilizes $S_0=[t_0,t_1]$ and $w$ stabilizes $S_{k+1}=[t_{k+1},t_w]$.
Furthermore, the union $S'=\bigcup_{i\in\{0,\ldots,k+1\}}S_i$ of the $S_i$ is a pro-$p$ tree that contains $t_v$ and $t_w$, hence it contains the whole $[t_v,t_w]$. In particular, $S'$ contains $e$, so $e\in [t_j,t_{j+1}]$ for some $j\in\{0,\ldots,k+3\}$. By the choice of $e$, it cannot be stabilized by $v$ or $w$, so there exists a vertex $u_j$ such that $u_j\in G_e$. Now $G_e$ is an abelian pro-$\C$ subgroup of $G$ that contains $u_j$ and each vertex of $K$, but by Theorem \ref{thm: centralisers structure}, this is only possible if $u_j$ is adjacent to every vertex of $K$. By maximality of $K$, $u_j\in K$, but this contradicts the fact that no element of the path $p$ is contained in $K$. If we assumed $K$ to be empty, we have anyway proved that there is a vertex $u_j$ contained in an edge stabiliser of $T$ contradicting that $K=\varnothing$.\\
This proves that the graph $K$ is a disconnecting complete graph contained in an edge stabiliser, as required.
\end{proof}

\section{JSJ decompositions}

\subsection*{Prerequisites}

In the previous section, we have characterised when a pro-$\mathcal{C}$ RAAGs admits a splitting over an abelian subgroup. Our next goal is to describe all the splittings of these groups over abelian subgroups. In the abstract case, the abelian splittings of a finitely generated group are encoded in a construction called the JSJ decomposition of a group.
We develop this theory following the approach of Guirardel and Levitt in \cite{Guirardel}.  We show that it can be naturally extended to the pro-$\C$ world; for additional results and alternative definitions on the theory of JSJ decompositions see the references in \cite{Guirardel}.

\begin{definition}[$\mathcal{A}$-trees]
    For each class of pro-$\C$ groups $\mathcal{A}$ closed for subgroups and conjugation, we define a $\mathcal{A}$-tree $(T,G)$ as a pro-$\C$ tree $T$ with an action of a pro-$\C$ group $G$ such that each edge stabiliser is a group in the class $\mathcal{A}$.
\end{definition}
We often denote the $\mathcal{A}$-tree as $T$ rather than $(T,G)$ whenever the pro-$\C$ group $G$ acting on it is clear by the context and we will say that an $\mathcal{A}$-tree $(T,G)$ is trivial if $T$ consists of a single vertex stabilized by the whole $G$.

We say that a subgroup $H$ of a pro-$\C$ group $G$ is \emph{universally elliptic} (for actions over $\mathcal{A}$-trees) if the action of $H$ is elliptic over any $\mathcal{A}$-tree $(T,G)$ on which $G$ acts.

\begin{definition}[JSJ decompositions]\label{Definition JSJ Dec}\

\begin{itemize}
    \item An $\mathcal{A}$-tree $(T,G)$ is \emph{universally elliptic} if its edge stabilisers $G_e\leq G$ are universally elliptic for actions on $\mathcal{A}$-trees.
    \item An $\mathcal{A}$-tree $(T,G)$ \emph{dominates} another $\mathcal{A}$-tree $(T',G)$ if the same group $G$ acts on both of them and the action of vertex stabilisers $G_v$, $v\in T$ is elliptic on $T'$ too.
    \item Two $\mathcal{A}$-trees $(T,G)$ and $(T',G)$ are \emph{equivalent} if the same pro-$\C$ group $G$ acts on both of them and they dominate each other. An equivalence class of $\mathcal{A}$-trees for this relation is said to be a \emph{deformation space}.
    \item The deformation space of the $\mathcal{A}$-trees that are universally elliptic and that dominate any other universally elliptic $\mathcal{A}$-tree on which $G$ acts is the \emph{JSJ deformation space} and its elements are called the \emph{JSJ tree decompositions}.
\end{itemize}

\end{definition}

Notice that the deformation space is unique, but there might be many non-isomorphic tree decompositions of a pro-$\C$ group $G$.

\begin{definition}[Rigid and flexible vertices]
A vertex $v$ of a \emph{JSJ}-tree is said to be \emph{rigid} if it is universally elliptic for the action on any $\mathcal{A}$-tree (even if the tree is not universally elliptic) and \emph{flexible} otherwise.
\end{definition}

Notice that if all vertex groups of an $\mathcal{A}$-tree are rigid, then the $\mathcal{A}$-tree is a JSJ tree, but the converse is not true, as the following example shows.

\begin{example}\label{ex: JSJ of complete graphs}
If $G\cong\mathbb{Z}_{\rho}^n$ for $n\geq 2$, $\rho$ an arbitrary set of primes, the abelian JSJ decomposition is trivial.\\
We claim that for each element $g\in G$ we can produce an $\mathcal{A}$-tree $(T,G)$ such that the action of $g$ on $T$ is hyperbolic. Consider a maximal procyclic subgroup $C$ containing $g\in G$. Any generator of a maximal procyclic group can be part of a basis of $\mathbb{Z}_{\rho}^n$, so we can pick a complement $B\cong \mathbb{Z}_{\rho}^{n-1}$ of $C$ in $G$ and write $G=HNN(B,B,id)$ with a generator of $C$ as the stable letter. The standard pro-$\C$ tree associated with this pro-$\C$ HNN extension is a vertex with a single loop and $g$ is hyperbolic by construction. This proves that no edge group can be universally elliptic, hence there exists a single universally elliptic $\mathcal{A}$-tree $(T,G)$ on which $G$ acts, which is a tree $T$ with a single point. This is the JSJ decomposition of $G$, which has a single flexible vertex.
\end{example}

Sometimes is convenient to study \emph{relative} JSJ decompositions, which are defined as follows.
\begin{definition}[Relative JSJ Decompositions]
    Let $\mathcal{H}$ be an arbitrary family of subgroups of a pro-$\C$ group $G$. An $\mathcal{A}$-tree $(T,G)$ is an $(\mathcal{A,H})$-tree if all the subgroups in the class $\mathcal{H}$ are elliptic. An $(\mathcal{A,H})$-tree is an \emph{$(\mathcal{A,H})$-JSJ decomposition} if it is universally elliptic for actions on $(\mathcal{A},\mathcal{H})$-trees and it dominates every other universally elliptic $(\mathcal{A,H})$-tree.
\end{definition}

We now turn our attention to the study of the JSJ-decomposition of a pro-$\C$ RAAG over abelian groups. Let $G=G_\Gamma$ be a pro-$\C$ RAAG over a finite connected graph $\Gamma$. From here on, we assume $\mathcal{A}$ to be the class of abelian pro-$\C$ subgroups of $G$ and $\mathcal{H}$ to be the class of procyclic groups generated by canonical generators of $G$.

We first construct by induction a decomposition of $G$ over abelian subgroups relative to $\mathcal{H}$, and prove that it is actually an $(\mathcal{A,H})$-JSJ decomposition. We then refine this decomposition in order to obtain the $\mathcal{A}$-JSJ decomposition of $G$.

As we are interested in splittings over standard subgroups of disconnecting complete graphs, we first need some basic properties of splittings of this type.

\begin{lemma}[Disconnecting complete graphs are elliptic]\label{lem: complete graphs elliptical}
Let $G_\Gamma$ be a pro-$\C$ RAAG over a finite connected graph $\Gamma$ and $K\leq \Gamma$ be a complete subgraph of $\Gamma$.
\begin{enumerate}
    \item If all cyclic subgroups generated by canonical generators in $K$ are universally elliptic for their action on $\mathcal{A}$-trees, then the whole standard subgroup $G_K$ is universally elliptic for its action on $\mathcal{A}$-trees.
    \item If $K$ is a minimal disconnecting complete graph (in the sense that no proper subset of $K$ is a disconnecting complete graph), then the standard subgroup $G_K$ is universally elliptic for its action on $\mathcal{A}$-trees.
    \item If $\mbox{Star}(v)$ is a complete graph for $v\in V(\Gamma)$, then there exists an $\mathcal{A}$-tree on which the action of $v$ is hyperbolic.
\end{enumerate}
\end{lemma}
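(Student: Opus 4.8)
The plan is to handle the three items in order, each reducing to tools already established. For \textbf{Item (1)}: since $K$ is a complete graph, $G_K \cong \widehat{\Z}_\C^{|V(K)|}$ is a finitely generated abelian pro-$\C$ group, and it is generated by the procyclic subgroups $\overline{\langle v\rangle}$ for $v\in V(K)$, each of which is universally elliptic by hypothesis. Given any $\mathcal{A}$-tree $(T,G)$, each $v$ fixes a nonempty subtree $T^v$; since the generators of $K$ pairwise commute, Lemma \ref{lem: culler vogtmann}(2) (or, in the closed-subgroup form, the fact that commuting elliptic elements have intersecting fixed-point trees together with the Helly property, Lemma \ref{lem: Helly}) gives a common fixed vertex for all of $V(K)$, hence for $G_K$. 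So $G_K$ is elliptic on every $\mathcal{A}$-tree, i.e. universally elliptic.

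For \textbf{Item (2)}: by the graph characterisation of abelian splittings, Theorem \ref{thm: abelian splitting of RAAGs}, a minimal disconnecting complete graph $K$ produces the splitting $G_\Gamma = G_{\Gamma_1\cup K}\amalg_{G_K} G_{\Gamma_2\cup K}$ (proper, by Corollary \ref{cor: proper_retraction}), and I will argue that each canonical generator $v\in K$ is universally elliptic; then Item (1) finishes. The point is that if some $v\in K$ were hyperbolic on an $\mathcal{A}$-tree $T$, Lemma \ref{lem: Groves Hull} would apply: $\mbox{Star}(v)$ would be complete and the set $S=\{u\in\link(v)\mid \mbox{Star}(u)\text{ not complete}\}$ would be a disconnecting complete graph strictly contained in $\mbox{Star}(v)$ (in particular in $K$, since $K\subseteq \mbox{Star}(v)$ as $K$ is complete and contains $v$), and moreover $v\notin S$; this contradicts the minimality of $K$ unless $V(\Gamma)=\mbox{Star}(v)$, i.e. $\Gamma$ is complete, in which case $\Gamma$ has no proper disconnecting subgraph and the hypothesis is vacuous. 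Hence no $v\in K$ is hyperbolic on any $\mathcal{A}$-tree, so all such $v$ are universally elliptic and Item (1) gives the claim. The one subtlety to be careful about is checking that $S\subseteq K$: since $K\subseteq \link(v)\cup\{v\}=\mbox{Star}(v)$ and $S\subseteq\link(v)$, this needs that any $u\in S\setminus K$ still separates the same way; this follows because $S$ as produced by Lemma \ref{lem: Groves Hull} is itself a disconnecting complete graph, so minimality of $K$ forces $S=K$, which is already a contradiction with $v\notin S$ once $|V(K)|\ge 1$ and we only needed $S\subsetneq K\cup\{v\}$ — I will state this carefully.

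For \textbf{Item (3)}: this is a direct generalisation of Example \ref{ex: JSJ of complete graphs} and Remark \ref{rem: hyperbolic splitting}. If $\mbox{Star}(v)$ is complete, then $G_{\tiny{\mbox{Star}(v)}} = \overline{\langle v\rangle}\times G_{\tiny{\link(v)}}$ with $G_{\tiny{\link(v)}}$ a complete-graph RAAG, so $\overline{\langle v\rangle}$ is a direct factor of a standard subgroup. Using the proper HNN splitting $G_\Gamma = HNN(G_{\Gamma\smallsetminus\{v\}}, G_{\tiny{\link(v)}}, id)$ from Remark \ref{rem: hyperbolic splitting} with stable letter $v$, the element $v$ acts hyperbolically on the associated standard pro-$\C$ tree $T$ (as already noted in that remark, via Theorem \ref{thm: RZ structure HNN}, since $v$ has minimal support among its conjugates when $\mbox{Star}(v)$ is complete, so $v\notin$ any conjugate of $G_{\tiny{\link(v)}}$); moreover the edge stabilisers of $T$ are conjugates of $G_{\tiny{\link(v)}}$, which is abelian since $\link(v)$ is complete, so $T$ is an $\mathcal{A}$-tree. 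This exhibits the required $\mathcal{A}$-tree.

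I expect \textbf{Item (2)} to be the main obstacle: the interplay between the minimality hypothesis on $K$ and the set $S$ extracted from Lemma \ref{lem: Groves Hull} requires care to rule out the hyperbolic case cleanly without circular reasoning, and one must separately dispose of the degenerate case where $\Gamma$ itself is complete.
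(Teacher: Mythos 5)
Your Items (1) and (3) follow the paper's proof essentially verbatim: (1) is the commuting-elliptic-elements argument via Lemma \ref{lem: culler vogtmann}(2), and (3) is the observation that the standard tree of the HNN splitting \eqref{eq: splitting with a vertex} has abelian edge stabilisers when $\link(v)$ is complete. Both are correct.

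Item (2), however, has a genuine gap. You want to contradict the minimality of $K$ using the set $S=\{u\in\link(v)\mid \mbox{Star}(u)\text{ not complete}\}$ produced by Lemma \ref{lem: Groves Hull}, but the minimality hypothesis only says that no \emph{proper subset of $K$} is a disconnecting complete graph; it says nothing about disconnecting complete graphs that are not contained in $K$. Your inference that $S\subseteq K$ ``since $K\subseteq\mbox{Star}(v)$ and $S\subseteq\link(v)$'' is a non sequitur --- both being subsets of $\mbox{Star}(v)$ does not make one contain the other, and $S$ may well contain vertices of $\link(v)$ lying outside $K$. Your attempted patch (``minimality of $K$ forces $S=K$'') misreads the hypothesis in the same way: incomparable disconnecting complete graphs are not excluded. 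The paper closes this case by a purely graph-theoretic argument that never mentions $S$: if $v\in K$ were hyperbolic then $\mbox{Star}(v)$ is complete by Lemma \ref{lem: Groves Hull}(1); minimality of $K$ means $K\smallsetminus\{v\}$ does not disconnect $\Gamma$, so $\Gamma'=(V(\Gamma)\smallsetminus V(K))\cup\{v\}$ is connected while $\Gamma'\smallsetminus\{v\}=\Gamma\smallsetminus K$ is disconnected, i.e.\ $v$ is a cut vertex of $\Gamma'$; hence there are $w_1,w_2\in\link(v)$ in different components of $\Gamma\smallsetminus K$, and completeness of $\mbox{Star}(v)$ would make them adjacent --- a contradiction. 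You should replace your $S$-based argument with this (or an equivalent) use of minimality; the degenerate complete-graph case you flag is handled correctly.
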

\begin{proof}\
\begin{enumerate}
    \item Since by assumption $\Gamma$ is connected, this follows as a consequence of Lemma \ref{lem: culler vogtmann} (2).
    \item Assume that $G$ acts on an $\mathcal{A}$-tree $(T,G)$ and suppose that there exists at least one hyperbolic canonical generator $v\in V(K)$. By Lemma \ref{lem: Groves Hull}(1), we have that $\mbox{Star}(v)$ is a complete graph. Since a complete graph does not have any disconnecting subgraphs, it follows that $\Gamma\neq \mbox{Star}(v)$.
    From the minimality of the disconnecting complete graph $K$, we have that the full subgraph $\Gamma'$ generated by $(V(\Gamma)\smallsetminus V(K))\cup \{v\}$ is connected and $v$ is a disconnecting vertex of $\Gamma'$. In particular, there are two vertices $w_1,w_2\in V(\Gamma')$ that are adjacent to $v$ but lie in different connected components of $\Gamma \smallsetminus K$. This contradicts the fact that $\mbox{Star}(v)$ is a complete graph.
    Hence each canonical generator of $K$ must be elliptic and by Item (1) the whole $K$ is elliptic.
    \item It suffices to notice that the standard pro-$\C$ tree associated with the splitting (\ref{eq: splitting with a vertex}) has abelian edge stabilisers because $\link(v)$ is a complete graph.
\end{enumerate}
\end{proof}

We record the following graph theoretical observation.

\begin{lemma}[Disconnecting graphs of components] \label{lem: disconnecting graph of connected component}
    Let $\Gamma$ be a finite connected simplicial graph. Let $K$ be a disconnecting complete subgraph of $\Gamma$ and let $\{\Gamma^i \mid i\in \{1,\ldots, m\}\}$ be the connected components of $\Gamma\smallsetminus K$. 
    
    If $K'$ is a disconnecting subgraph of $\Gamma^j\cup K$ for some $j\in \{1,\ldots, m\}$, then $K'$ is also a disconnecting subgraph of $\Gamma$.
\end{lemma}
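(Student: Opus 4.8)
The plan is to argue directly from the definitions, using the fact that $K$ already disconnects $\Gamma$ into the components $\Gamma^1,\ldots,\Gamma^m$ and that $K'$ disconnects $\Gamma^j\cup K$. First I would fix notation: write $\Gamma^j \cup K = A \sqcup B$ as a separation witnessing that $K'$ disconnects $\Gamma^j\cup K$, meaning $V(A), V(B)$ are non-empty, $V(A)\cup V(B) = V((\Gamma^j\cup K)\smallsetminus K')$, and there is no edge of $\Gamma^j\cup K$ between a vertex of $A$ and a vertex of $B$. The goal is to produce a separation of $\Gamma\smallsetminus K'$ into two non-empty pieces with no connecting edge.

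The key observation is that every vertex of $K$ that survives in $(\Gamma^j\cup K)\smallsetminus K'$ lies in the closure of $A$ or of $B$; since $K$ is complete, in fact \emph{all} the surviving vertices of $K$ must lie on one side — say all of $V(K)\smallsetminus V(K')$ lies in one connected component of $(\Gamma^j\cup K)\smallsetminus K'$, and after possibly enlarging $A$ I may assume $V(K)\smallsetminus V(K') \subseteq V(A)$ (if $V(K)\subseteq V(K')$ this inclusion is vacuous and we must treat that case, but then $K\subseteq K'$, and since $K$ already disconnects $\Gamma$ and $K'\supseteq K$, clearly $K'$ disconnects $\Gamma$; so assume $V(K)\not\subseteq V(K')$). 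Now I would re-attach the other components of $\Gamma\smallsetminus K$: each $\Gamma^i$ with $i\neq j$ is connected to the rest of $\Gamma$ only through vertices of $K$, hence only through vertices of $A$ (the surviving ones) — there are no edges from $\Gamma^i$ ($i\neq j$) to $B$ because any such edge would have to go through $K$. Therefore setting $A' = A \cup \bigcup_{i\neq j}(\Gamma^i \smallsetminus K')$ and $B' = B$, I claim $V(A'), V(B')$ are non-empty and there is no edge of $\Gamma$ between $A'$ and $B'$: edges within $\Gamma^j\cup K$ are handled by the original separation, edges from $\Gamma^i$ ($i\neq j$) to $\Gamma^j\cup K$ only reach $K\subseteq$ the $A$-side, and edges between distinct $\Gamma^i,\Gamma^{i'}$ ($i,i'\neq j$) do not exist since $K$ disconnects them. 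Finally $V(A')\cup V(B') = V(\Gamma\smallsetminus K')$ because every vertex of $\Gamma$ not in $K'$ lies in some $\Gamma^i\smallsetminus K'$ or in $K\smallsetminus K'$, all of which are accounted for. Hence $K'$ disconnects $\Gamma$.

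The main obstacle I anticipate is the bookkeeping in the degenerate cases: (i) $K\subseteq K'$, handled above by noting $K'\supseteq K$ is then itself disconnecting; (ii) the possibility that $B$ contains no vertex of $K$ at all, which is fine — we just do not need to move anything; and (iii) verifying $V(B')=V(B)$ is non-empty, which is immediate from the choice of the separation of $\Gamma^j\cup K$. The one genuinely substantive point is the claim that all surviving vertices of the \emph{complete} graph $K$ lie on a single side of the separation $A\sqcup B$ of $(\Gamma^j\cup K)\smallsetminus K'$: this is because any two such vertices are joined by an edge of $K$, and a separation has no edges across it, so they cannot be split between $A$ and $B$. Once that is in hand, the rest is a routine check that no edge of $\Gamma$ crosses the induced separation, using only that $K$ is a cut set separating the $\Gamma^i$ from one another.
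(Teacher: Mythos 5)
Your proof is correct, and it is essentially the contrapositive of the paper's argument run forwards. The paper supposes $\Gamma\smallsetminus K'$ is connected and deduces that $(\Gamma^j\cup K)\smallsetminus K'$ would then be connected: every vertex of $(\Gamma^j\cup K)\smallsetminus K'$ reaches some vertex of $K\smallsetminus K'$ by a path staying inside $(\Gamma^j\cup K)\smallsetminus K'$, and completeness of $K$ glues any two such paths together. You instead start from the given separation $A\sqcup B$ of $(\Gamma^j\cup K)\smallsetminus K'$, observe that the surviving clique $K\smallsetminus K'$ must lie entirely on one side, and re-attach the components $\Gamma^i$, $i\ne j$, to that side to exhibit an explicit separation of $\Gamma\smallsetminus K'$. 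Both arguments rest on exactly the same two facts --- $K$ is complete, so $K\smallsetminus K'$ cannot be split across a separation, and $K$ is the only interface between $\Gamma^j$ and the remaining components --- so neither buys anything substantive over the other; your direct version is arguably a bit more transparent. One small caution: in the degenerate case $V(K)\subseteq V(K')$, your justification ``$K'\supseteq K$ and $K$ disconnects $\Gamma$, hence $K'$ disconnects $\Gamma$'' is not a valid implication in general (a superset of a cut set need not be a cut set, since it may swallow all but one component). It does hold here because $K'\subseteq \Gamma^j\cup K$, so $\Gamma\smallsetminus K'$ still contains every $\Gamma^i$ with $i\ne j$ as well as the nonempty set $(\Gamma^j\cup K)\smallsetminus K'=\Gamma^j\smallsetminus K'$, with no edges between these pieces; you should say this explicitly. (The paper's own proof dismisses the analogous point with a similarly terse remark.)
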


\begin{proof}
    Suppose on the contrary that $\Gamma \smallsetminus K'$ is connected. Since $K$ is by assumption a disconnecting subgraph of $\Gamma$, it follows that $K$ is not contained in $K'$ and so $K \setminus K'$ is nonempty. Since $\Gamma \smallsetminus K'$ is connected and $K$ is disconnecting, for each vertex $v$ in $(\Gamma^j\cup K)\smallsetminus K'$ there is a vertex $w(v)$ in $K\smallsetminus K'$ such that $v$ and $w(v)$ are connected by a path inside $(\Gamma^j\cup K)\smallsetminus K'$. As $K$ is complete, there is an edge between any two vertices in $K$. It follows that any pair of vertices $v,v' \in (\Gamma^j\cup K)\smallsetminus K'$ are connected by the path which is the composition of the paths from $v$ to $w(v)$, the edge $(w(v), w(v'))$ and the path from $w(v')$ to $v'$. Since this path is in $(\Gamma^j\cup K)\smallsetminus K'$, we have that $(\Gamma^j\cup K)\smallsetminus K'$ is connected, deriving a contradiction.
\end{proof}

\subsection*{\texorpdfstring{$(\mathcal{A,H})$}{(A,H)}-JSJ decomposition of pro-\texorpdfstring{$\C$}{C} RAAGs}

We first construct the (relative) abelian JSJ decomposition of pro-$\C$ RAAGs under the assumption that all the subgroups in the class $\mathcal{H}=\{\langle v \rangle \mid v\in V(\Gamma)\}$ of procyclic subgroups generated by canonical generators are elliptic.

\begin{theorem}[Relative JSJ-decomposition] \label{thm: A-H JSJ decomposition}
Let $G=G_\Gamma$ be a pro-$\C$ RAAG associated with a connected abstract finite graph $\Gamma$.

There is a (possibly trivial) decomposition of $G$ as a fundamental pro-$\C$ group of a reduced finite tree of pro-$\C$ groups $(\mathcal{G}_\Delta, \Delta)$ with the following properties:

\begin{itemize}
    \item vertex groups of $(\mathcal{G}_\Delta, \Delta)$ are standard subgroups which are either abelian or their underlying graph does not contain any disconnecting complete subgraph;
    \item each edge group of $(\mathcal{G}_\Delta, \Delta)$ is a standard subgroup associated with a disconnecting complete subgraph $K_e$ of $\Gamma$ and, moreover, $K_e$ is a minimal (with respect to inclusion) disconnecting complete graph of a subgraph $\Gamma'$ of $\Gamma$.
\end{itemize}

Furthermore, the standard pro-$\C$ tree associated with this decomposition is an $(\mathcal{A,H})$-JSJ tree decomposition $(T_\Delta,G)$ of $G$.
\end{theorem}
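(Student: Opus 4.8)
The plan is to establish both claims at once by induction on $|V(\Gamma)|$. If $\Gamma$ is complete or has no disconnecting complete subgraph, we take the trivial tree of groups, one vertex with group $G$; this is consistent with the asserted form of the vertex groups, and by Theorem \ref{thm: abelian splitting of RAAGs} there is nothing more to do. Otherwise choose a complete subgraph $K\le\Gamma$ which disconnects $\Gamma$ and is minimal (with respect to inclusion) with this property, and let $\Gamma^1,\dots,\Gamma^m$, $m\ge 2$, be the connected components of $\Gamma\setminus K$. Since $\Gamma$ has no edge joining distinct $\Gamma^i$, the defining presentation of $G_\Gamma$ factors as the iterated pro-$\mathcal{C}$ amalgam $G_\Gamma=G_{\Gamma^1\cup K}\amalg_{G_K}\cdots\amalg_{G_K}G_{\Gamma^m\cup K}$, in which every partial amalgam is the standard subgroup $G_{\Gamma^1\cup\cdots\cup\Gamma^j\cup K}$; hence by Corollary \ref{cor: proper_retraction} each factor embeds and $G_\Gamma$ is the pro-$\mathcal{C}$ fundamental group of the star-shaped tree of groups with central group $G_K$, outer vertex groups $G_{\Gamma^i\cup K}$, and all edge groups equal to $G_K$. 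Each $\Gamma^i\cup K$ is a connected full subgraph of $\Gamma$ on strictly fewer vertices, so the induction hypothesis supplies a reduced tree of groups $(\mathcal{G}_i,\Delta_i)$ for $G_{\Gamma^i\cup K}$ of the required form whose standard tree is its $(\mathcal{A},\mathcal{H})$-JSJ tree.

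Now I refine this star-shaped splitting. Because $K$ is complete, $G_K$ is the closed subgroup generated by the finitely many, pairwise commuting canonical generators of $K$; each of these is elliptic on the standard tree of $(\mathcal{G}_i,\Delta_i)$ (an $\mathcal{H}$-tree), so by Lemma \ref{lem: culler vogtmann}(2) they fix a common vertex, and hence so does the closed subgroup $G_K$ they generate. The stabiliser of that vertex is a standard subgroup, and since $K$ consists of canonical generators, each of minimal support $\{v\}$ among its conjugates, the fact that $G_K$ lies in a conjugate of a standard subgroup forces $G_K$ into that standard subgroup itself (comparison of supports, Lemma \ref{lem: minimal standard subgroups}); thus $G_K$ lies in an actual vertex group of $(\mathcal{G}_i,\Delta_i)$. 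We may therefore blow up each outer vertex $G_{\Gamma^i\cup K}$ of the star into $(\mathcal{G}_i,\Delta_i)$, joining the central $G_K$-vertex to a vertex of $\Delta_i$ whose group contains $G_K$; the pro-$\mathcal{C}$ fundamental group of the resulting tree of groups is again $G_\Gamma$, and all amalgams remain proper by Corollary \ref{cor: proper_retraction}. Collapsing every edge whose group coincides with an incident vertex group yields a reduced tree of groups $(\mathcal{G}_\Delta,\Delta)$: by induction its vertex groups are standard subgroups over connected graphs that are either abelian or have no disconnecting complete subgraph (the surviving central $G_K$, if any, being abelian), and its edge groups are $G_K$ together with the inherited ones, each a standard subgroup of a complete graph that is minimal disconnecting in some subgraph $\Gamma'\subseteq\Gamma$ (with $\Gamma'=\Gamma$ for $K$) and, by Lemma \ref{lem: disconnecting graph of connected component}, disconnecting in $\Gamma$ itself.

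It remains to identify the standard pro-$\mathcal{C}$ tree $T_\Delta$ of $(\mathcal{G}_\Delta,\Delta)$ with an $(\mathcal{A},\mathcal{H})$-JSJ tree. Since $\Delta$ is finite the action is cofinite, the edge stabilisers are conjugates of the abelian groups $G_{K_e}$, and every canonical generator lies in a vertex or edge group, so $(T_\Delta,G)$ is an $(\mathcal{A},\mathcal{H})$-tree. For universal ellipticity: on any $(\mathcal{A},\mathcal{H})$-tree $(S,G)$ the canonical generators of the complete graph $K_e$ are elliptic and pairwise commute, so by Lemma \ref{lem: culler vogtmann}(2) the closed subgroup $G_{K_e}$ they generate fixes a vertex of $S$. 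For the domination property, take any $(\mathcal{A},\mathcal{H})$-tree $(S,G)$ and a vertex group $G_{\Lambda_v}$ of $(\mathcal{G}_\Delta,\Delta)$. If $\Lambda_v$ is complete, the same argument gives $G_{\Lambda_v}$ elliptic on $S$. Otherwise $\Lambda_v$ is connected, not complete, and has no disconnecting complete subgraph; restricting the $G$-action on $S$ to the RAAG $G_{\Lambda_v}$ gives an action with abelian edge stabilisers, and were it without a global fixed point, Theorem \ref{thm: abelian splitting of RAAGs} applied to $G_{\Lambda_v}$ would force $\Lambda_v$ to be complete or to carry a disconnecting complete subgraph, a contradiction. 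Hence every vertex group of $(\mathcal{G}_\Delta,\Delta)$ is elliptic on every $(\mathcal{A},\mathcal{H})$-tree, so $(T_\Delta,G)$ dominates all of them and in particular all universally elliptic ones; combined with its own universal ellipticity this puts $(T_\Delta,G)$ in the $(\mathcal{A},\mathcal{H})$-JSJ deformation space. In the trivial case the single vertex group is $G=G_\Gamma$ and the statement follows from the $\Lambda_v=\Gamma$ instance of this argument.

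The technical heart of the proof is the blow-up step of the second paragraph: verifying that refining the star-shaped splitting by the inductively constructed trees of groups still presents $G_\Gamma$ as a pro-$\mathcal{C}$ fundamental group, and that the subsequent reduction preserves connectedness and standardness of the vertex groups and the minimality of the disconnecting edge graphs. By contrast, the domination step — usually the delicate part of a JSJ argument — is essentially immediate here, since a pro-$\mathcal{C}$ RAAG over a connected graph that is neither complete nor has a disconnecting complete subgraph admits no nontrivial abelian splitting whatsoever, by Theorem \ref{thm: abelian splitting of RAAGs}.
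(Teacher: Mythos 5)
Your proposal is correct and follows essentially the same route as the paper: induction on $|V(\Gamma)|$, splitting over a minimal disconnecting complete subgraph $K$, applying the inductive decomposition to each $G_{\Gamma^i\cup K}$, locating $G_K$ inside an actual vertex group via ellipticity of commuting canonical generators (Lemma \ref{lem: culler vogtmann}) and Lemma \ref{lem: minimal standard subgroups}, and verifying universal ellipticity and domination via Lemma \ref{lem: complete graphs elliptical} and Theorem \ref{thm: abelian splitting of RAAGs}. The only differences are cosmetic (a star-shaped assembly that you then reduce, versus the paper's path-shaped assembly together with an explicit proof of reducedness using Lemma \ref{lem: disconnecting graph of connected component}).
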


\begin{proof}
We prove the statements by induction on the number of generators of the pro-$\C$ RAAG.

Assume first that $\Gamma$ has one vertex, i.e. $G=\mathbb{Z}_{\pi(\C)}$. In this case, we consider the decomposition as a fundamental group of a graph of groups to be trivial, so $\Delta$ is a point and the associated group is $\mathbb{Z}_{\pi(\C)}$. This decomposition satisfies the required conditions. Furthermore, since $G$ is a standard subgroup, by assumption it is elliptic and so the $(\mathcal{A,H})$-JSJ decomposition of $G$ is trivial and agrees with the decomposition as a fundamental group of a graph of groups.

Assume that we have already established the decomposition of every pro-$\C$ RAAG whose underlying graph has at most $n-1$ vertices as a fundamental group of a graph of groups and that we have proved that the $(\mathcal{A,H})$-JSJ decomposition of $G$ is determined by the group decomposition as a fundamental group of a graph of pro-$\C$ groups satisfying the properties of the theorem.

Let now $\Gamma$ be a connected graph with $n$ vertices, $n\ge 2$. Suppose first that $\Gamma$ does not have any disconnecting complete subgraph. In this case, we consider the decomposition as a fundamental group of a graph of groups to be trivial and so $\Delta$ has one vertex with corresponding group $G$. This decomposition satisfies the requirements. If $\Gamma$ is a complete graph, then $G \simeq \mathbb Z_{\pi(\C)}^n$. Since by assumption, each canonical generator is elliptic, then by Lemma \ref{lem: complete graphs elliptical}, the group $G$ stabilizes a point, and hence the $(\mathcal{A,H})$-JSJ decomposition is trivial and coincides with the decomposition of $G$ as the fundamental group of a graph of groups. If $\Gamma$ is not complete and does not have any disconnecting complete subgraph, then by Theorem \ref{thm: abelian splitting of RAAGs}, $G$  cannot act non-trivially on an $\mathcal{A}$-tree, so the $(\mathcal{A,H})$-JSJ decomposition is again trivial.

Suppose now that $\Gamma$ has a disconnecting complete graph. Let $K$ be a disconnecting complete graph such that $|V(K)|$ is minimal among disconnecting complete graphs.

We first construct a splitting of $G$ as an amalgamated free product over the standard subgroup $G_{K}$. Assume that $\Gamma \smallsetminus K$ has $m\geq 2$ nontrivial connected components $\Gamma^{i}$, for $i\in \{1,\ldots,m\}$. In this case, we consider the splitting of $G$ as a pro-$\C$ amalgamated product of the form
$$G=\coprod_{i=1}^m{}_{_{G_{K}}}G_{K\cup \Gamma^i}.$$
By \cite[Theorem 6.5.2]{RibesBook}, this decomposition corresponds to the pro-$\C$ fundamental group of a tree of groups $(\overline{\mathcal{G}_\Delta},\overline{\Delta})$ with $m$ vertices $V(\overline{\Delta})=\{x_1,\ldots,x_m\}$, whose vertex groups $\{G_{K\cup \Gamma^i} \mid i\in \{1,\ldots,m\}\}$ respectively, and with all edges of $E(\overline{\Delta})$ stabilised by $G_K$. Since $K$ is a complete graph, $G_K$ is a pro-$\C$ abelian subgroup and hence this decomposition is an $\mathcal A$-decomposition of $G$.
Notice that if $m>2$, the underlying tree $\overline{\Delta}$ is not unique. Indeed any tree with $m$ vertices provides the same fundamental group $G$ since all edge groups coincide. Without loss of generality, we choose the underlying graph $\overline{\Delta}$ to be a path consisting of $m$ points and $m-1$ edges, vertex groups $G_{K \cup \Gamma^i}$ and edge groups $G_K$ (with the natural embeddings). By construction, the graph of pro-$\C$ groups $(\overline{\mathcal{G}_\Delta},\overline{\Delta})$ has $G$ as its pro-$\C$ fundamental group.

By the induction hypothesis, for each $i\in \{1,\ldots,m\}$ each vertex group $G_{K\cup \Gamma^i}$ has a decomposition as a fundamental group of a tree of pro-$\C$ groups $(\mathcal{G}_{\Delta_i}, \Delta_i)$ as in the statement and this decomposition determines an $(\mathcal{A,H})$-JSJ tree decomposition.

For each $i\in \{1,\ldots,m\}$, by Lemma \ref{lem: complete graphs elliptical}, the action of the group $G_K$ is elliptic on any $(\mathcal{A},\mathcal{H})$-tree $(T_{K\cup \Gamma^i, } G_{K\cup \Gamma^i})$ and so $G_K$ is contained in a vertex stabiliser of $T_{K\cup \Gamma^i}$. Hence, a conjugate of $G_K$ is contained in a vertex group of the graph of groups $(\mathcal{G}_{\Delta_i},\Delta_i)$, namely $v_i \in \Delta_i$. By Lemma \ref{lem: minimal standard subgroups}, if a conjugate of a canonical generator is contained in a standard subgroup, then the standard subgroup contains the generator. As each vertex group of $(\mathcal{G}_{\Delta_i},\Delta_i)$ is a standard subgroup by induction, $G_K$ itself is contained in the vertex group $\mathcal{G}_{\Delta_i}(v_i)$.

We construct a tree of groups $(\mathcal{G}_\Delta, \Delta)$ in the following way. Define $V(\Delta)=\bigcup_{i=1}^m V(\Delta_i)$ and
$$ 
E(\Delta)=\{E(\Delta_i), (v_j,v_\ell) \mid i\in \{1,\ldots,m\}, (v_j,v_\ell)\in E(\overline{\Delta})
\}.
$$
For each $w \in V(\Delta)$ there is $i\in \{1, \dots, m\}$ such that $w\in \Delta_i$ and we define the group $\mathcal{G}_\Delta(w)$ of $\mathcal{G}_\Delta$ to be $\mathcal{G}_\Delta(w)=\mathcal{G}_{\Delta_i}(w)$. Similarly, if $e$ is an edge of $\Gamma$ such that $e\in E(\Delta_i)$, then the corresponding group (and vertex embeddings) are induced from $\Delta_i$. If $e=(v_j,v_\ell)$, we define $\mathcal{G}_\Delta(\delta)=G_K$ (with the natural embeddings). This graph of groups is well-defined as each edge group embeds in the adjacent vertex groups and its pro-$\C$ fundamental group is exactly $G$ by construction (as the fundamental group of the graphs $(\mathcal G_{\Delta_i}, \Delta_i)$ are the standard subgroups $G_{K \cup \Gamma_i}$). 

This graph of groups is reduced. Indeed by induction, edge groups of $(\mathcal{G}_{\Delta_i},\Delta_i)$ do not coincide with the adjacent vertex groups. We next show that $G_K$ cannot coincide with any vertex group in $\mathcal{G}_\Delta(v_i)$. Indeed, by definition, $\Gamma_i$ is a nontrivial connected component of $\Gamma \setminus K$ and so $G_{K\cup \Gamma^i}\neq G_K$ and, in particular, if the decomposition of $G_{K\cup \Gamma^i}$ is trivial, then the unique vertex group does not coincide with $G_K$. Assume next that $G_{K\cup \Gamma^i}$ has a nontrivial decomposition satisfying the conditions of the statement and suppose by contradiction that there is a vertex group of $(\mathcal{G}_{\Delta_i},\Delta_i)$ equal to $G_K$. In particular, since the graph $K\cup \Gamma^i$ is connected and edge groups are standard subgroups of complete disconnecting subgraphs of $K\cup \Gamma^i$, there would be disconnecting subgraph $K'$ of $K\cup \Gamma^i$ contained in $K$. By Lemma \ref{lem: disconnecting graph of connected component}, $K'$ would also be a disconnecting complete subgraph of $\Gamma$, contradicting the minimality of $K$. Hence, we have shown that the graph of groups is reduced.

We next show that the decomposition as a fundamental group of a graph of groups satisfies the required properties. Indeed, by the inductive hypothesis on $\Delta_i$, we have that each vertex group of $\mathcal{G}_\Delta$ is a standard subgroup which is either abelian or the underlying graph does not have disconnecting complete subgraphs; and the edge groups of $\mathcal{G}_\Delta$ are either $G_K$ or, by induction, they are standard subgroups associated with disconnecting complete subgraphs of a certain ${K\cup \Gamma^i}$, which are also disconnecting subgraphs for $\Gamma$ by Lemma \ref{lem: disconnecting graph of connected component}. In the former case, by our choice $G_K$ is the standard subgroup of a minimal complete disconnecting subgraph of $\Gamma$. In the latter case, the induction hypothesis assures that the associated disconnecting complete subgraph is minimal for a subgraph of ${K\cup \Gamma^i}$, which is also a subgraph of $\Gamma$.

Finally, we are left to check that the standard tree $(T_\Delta,G)$ associated with the decomposition as a fundamental group of a graph of groups given for $G$ is an $(\mathcal{A,H})$-JSJ tree. The tree $(T_\Delta,G)$ is universally elliptic since each edge stabiliser is either universally elliptic by the induction hypothesis or it is a conjugate of $G_K$ and since $G_K$ is abelian and generated by universally elliptic elements, by Lemma \ref{lem: complete graphs elliptical} (2), $G_K$ acts universally elliptic on any $\mathcal{A,H}$-tree.

In order to prove that $(T_\Delta,G)$ dominates any other $(\mathcal{A,H})$-tree $(T',G)$, consider a vertex stabiliser $H\leq G$ given by the decomposition of $G$ as a fundamental group of graphs of groups. By construction, $H$ is either a standard subgroup associated with a complete graph, with an elliptic action on any $(\mathcal{A,H})$-tree by Lemma \ref{lem: complete graphs elliptical} (2), or it is a standard subgroup associated with a graph without disconnecting complete subgraphs, which is also elliptic for the action on any $(\mathcal{A,H})$-tree by Theorem \ref{thm: abelian splitting of RAAGs}.

Therefore, the $(\mathcal{A,H})$-tree is a JSJ-tree decomposition of $G$.
\end{proof}

\subsection*{\texorpdfstring{$\mathcal{A}$}{A}-JSJ decomposition of pro-\texorpdfstring{$\C$}{C} RAAGs}

In order to obtain the general $\mathcal{A}$-JSJ decomposition, we must further refine the $(\mathcal{A,H})$-JSJ decomposition described in Theorem \ref{thm: A-H JSJ decomposition}.

\begin{definition}[Hanging vertex]
We say that a vertex $v$ of $\Gamma$ is a \emph{hanging vertex} if $\mbox{Star}(v)$ is a complete graph and for each $w\in \link(v)$, $\mbox{Star}(w)$ is not a complete graph.   
\end{definition}

\begin{theorem}[Abelian JSJ decomposition] \label{thm: A JSJ decomposition}
Let $G=G_\Gamma$ be a pro-$\C$ RAAG associated with a connected abstract finite graph $\Gamma$.

There is a (possibly trivial) decomposition of $G$ as a fundamental pro-$\C$ group of a reduced finite graph of pro-$\C$ groups $(\mathcal{G}_\Theta, \Theta)$ with the following properties:

\begin{itemize}
    \item the underlying graph $\Theta$ is either a tree or a tree with loops;
    \item vertex groups of $(\mathcal{G}_\Theta, \Theta)$ are standard subgroups which are either abelian or their underlying graph does not contain any disconnecting complete graph;
    \item each edge group of $(\mathcal{G}_\Theta, \Theta)$ is a standard subgroup associated with a disconnecting complete subgraph of $\Gamma$;
    \item hanging vertices do not belong to any vertex group.
\end{itemize}

Furthermore, the standard pro-$\C$ tree associated with this decomposition is an $\mathcal{A}$-JSJ tree decomposition $(T_\Theta,G)$ of $G$.
\end{theorem}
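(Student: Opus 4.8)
The plan is to obtain the wanted decomposition by refining the $(\mathcal{A},\mathcal{H})$-JSJ decomposition $(\mathcal{G}_\Delta,\Delta)$ of Theorem \ref{thm: A-H JSJ decomposition}, the refinement consisting of ``detaching'' each hanging vertex of $\Gamma$ and turning it into the stable letter of a loop. The construction rests on two preliminary facts. First, a canonical generator $v$ of $G$ is universally elliptic for actions on $\mathcal{A}$-trees if and only if $\mbox{Star}(v)$ is not a complete graph: if $\mbox{Star}(v)$ is complete this is Lemma \ref{lem: complete graphs elliptical}(3), and the converse is Lemma \ref{lem: Groves Hull}(1). Second, a graph-theoretic analysis of hanging vertices: if $v$ is hanging then $\link(v)$ is complete and $v$ becomes an isolated vertex of $\Gamma\smallsetminus\link(v)$, so $\link(v)$ is a disconnecting complete subgraph of $\Gamma$ (except when $\Gamma=\mbox{Star}(v)$ is itself complete, a case with no hanging vertex); no neighbour of $v$ has complete star, so no neighbour of $v$ is hanging and distinct hanging vertices are pairwise non-adjacent; and no canonical generator with complete star — a fortiori no hanging vertex — lies in a minimal disconnecting complete subgraph $K$ of a connected full subgraph $\Gamma'$ of $\Gamma$, for if $w\in K$ then $K\subseteq\mbox{Star}_{\Gamma'}(w)$ is complete, the neighbours of $w$ in $\Gamma'\smallsetminus K$ are pairwise adjacent hence contained in a single component of $\Gamma'\smallsetminus K$, so $K\smallsetminus\{w\}$ still disconnects $\Gamma'$, contradicting the minimality of $K$.

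Consequently no edge group of $(\mathcal{G}_\Delta,\Delta)$ contains a canonical generator with complete star; hence each such generator $w$ (in particular each hanging vertex) lies in a unique vertex group $G_\Lambda$, and $\Lambda$ is complete with $\Lambda=\mbox{Star}(w)$ — indeed $G_{\mbox{Star}(w)}$ fixes a vertex of the $(\mathcal{A},\mathcal{H})$-tree by Lemma \ref{lem: culler vogtmann}(2), so $\mbox{Star}(w)\subseteq\Lambda$ by Lemma \ref{lem: minimal standard subgroups}, $\Lambda\subseteq\mbox{Star}(w)$ because $\Lambda$ is complete and contains $w$, and $\Lambda$ cannot be non-complete since $\mbox{Star}(w)\subseteq\Lambda$ with $\link_\Lambda(w)=\link(w)$ complete and $w$ isolated in $\Lambda\smallsetminus\link_\Lambda(w)$ would otherwise contradict that $\Lambda$ has no disconnecting complete subgraph. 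Now, for each complete vertex group $G_\Lambda$ of $(\mathcal{G}_\Delta,\Delta)$ containing a hanging vertex $v$ (necessarily the unique such generator in $\Lambda$, with $\Lambda=\mbox{Star}(v)$), replace the corresponding vertex by the single vertex with group $G_{\link(v)}$ carrying one loop with stable letter $v$ and both edge embeddings the identity of $G_{\link(v)}$; since $v$ commutes exactly with $G_{\link(v)}$, this HNN extension is $G_\Lambda$, and every edge group of $\Delta$ incident to $G_\Lambda$ avoids hanging vertices, hence still embeds into $G_{\link(v)}$. Collapsing afterwards each non-loop edge whose group coincides with an adjacent vertex group (and using that $\Delta$ is a tree) yields a reduced finite graph of pro-$\mathcal{C}$ groups $(\mathcal{G}_\Theta,\Theta)$ with fundamental group $G$, whose underlying graph is a tree with loops, whose vertex groups are standard subgroups that are abelian or without disconnecting complete subgraph, whose edge groups are standard subgroups of disconnecting complete subgraphs of $\Gamma$ (the new loop groups $G_{\link(v)}$ by the facts above, the old ones by Theorem \ref{thm: A-H JSJ decomposition} together with Lemma \ref{lem: disconnecting graph of connected component}), and in which no hanging vertex belongs to a vertex group.

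It remains to verify that the standard pro-$\mathcal{C}$ tree $(T_\Theta,G)$ is an $\mathcal{A}$-JSJ tree. For universal ellipticity: each edge group is either an edge group of $(\mathcal{G}_\Delta,\Delta)$, universally elliptic by Theorem \ref{thm: A-H JSJ decomposition}, or a group $G_{\link(v)}$ generated by canonical generators with non-complete star, which are universally elliptic by the first fact, so $G_{\link(v)}$ is universally elliptic by Lemma \ref{lem: complete graphs elliptical}(1). For domination, given any universally elliptic $\mathcal{A}$-tree $(T,G)$ one must show each vertex group of $(\mathcal{G}_\Theta,\Theta)$ is elliptic on $T$. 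A standard-subgroup vertex group whose graph is non-complete and without disconnecting complete subgraph is elliptic on every $\mathcal{A}$-tree by Theorem \ref{thm: abelian splitting of RAAGs}. An abelian vertex group $M$ is elliptic on $T$ if and only if each of its canonical generators $w$ is, by Lemma \ref{lem: culler vogtmann}(2); if $\mbox{Star}(w)$ is non-complete then $w$ is universally elliptic, while if $\mbox{Star}(w)$ is complete then $M=G_{\mbox{Star}(w)}$ and — excluding the case $\Gamma$ complete, where the abelian JSJ is trivial and the only universally elliptic $\mathcal{A}$-tree is a point by Example \ref{ex: JSJ of complete graphs} — $M$ contains a canonical generator $w'$ with non-complete star (otherwise every generator of the complete graph $\mbox{Star}(w)$ would have all its neighbours inside it, making it a connected component of $\Gamma$, hence all of $\Gamma$); since $w'$ is universally elliptic and commutes with $w$, Lemma \ref{lem: culler vogtmann}(1) gives $T_w=T_{w'}$, a single vertex, so $w$ is elliptic on $T$. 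Hence $(T_\Theta,G)$ is universally elliptic and dominates every universally elliptic $\mathcal{A}$-tree, so it is an $\mathcal{A}$-JSJ tree. I expect the domination step to be the main obstacle: it forces one to deal with the abelian vertex groups that still contain canonical generators with complete star, and the key point is precisely the ``tethering'' argument via Lemma \ref{lem: culler vogtmann}(1) — such a generator commutes with a universally elliptic one and so is forced to be elliptic on every universally elliptic tree.
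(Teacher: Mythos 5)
Your construction follows the paper's: starting from the $(\mathcal{A},\mathcal{H})$-JSJ decomposition of Theorem \ref{thm: A-H JSJ decomposition}, you show each hanging vertex $v$ lies in no edge group and in a unique vertex group equal to $G_{\tiny{\mbox{Star}(v)}}$, replace that vertex group by $G_{\tiny{\link(v)}}$ carrying a loop with stable letter $v$, reduce, and verify the listed properties and universal ellipticity. That half of your argument is essentially the paper's and is fine.

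The domination step — which you yourself flag as the crux — has a genuine gap. For a canonical generator $w$ of an abelian vertex group $M$ with $\mbox{Star}(w)$ complete, you pick $w'\in M$ with non-complete star (hence universally elliptic) and invoke Lemma \ref{lem: culler vogtmann}(1) to get $T_w=T_{w'}$, ``a single vertex'', concluding $w$ is elliptic on any universally elliptic $\mathcal{A}$-tree $T$. This reads the clause ``$[g,h]=1\Rightarrow T_g=T_h$'' as saying that ellipticity transfers between commuting elements. That transfer principle is false, and the lemma's one-line proof only establishes that each element leaves the other's minimal invariant subtree invariant. Concretely, in $G_{P_4}$ (path $a-b-c-d$) the generator $b$ is universally elliptic (its star is not complete) and commutes with $a$, yet $a$ is hyperbolic on the universally elliptic $\mathcal{A}$-tree of the faithful splitting $HNN\bigl(G_{\{b,c,d\}},\langle b\rangle,\mathrm{id}\bigr)$. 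The same coexistence of a hyperbolic generator with commuting elliptic ones is exactly the situation analysed in Lemma \ref{lem: Groves Hull}(3), so your reading would contradict other parts of the paper. Hence your argument proves nothing about $w$ in the one case that matters (an abelian vertex group of $\Theta$ still containing a non-hanging generator with complete star, e.g.\ a triangle with a pendant edge).

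The paper closes this case by a different, genuinely necessary argument: supposing a generator $v$ of an abelian vertex group $H$ acts hyperbolically on a universally elliptic $\mathcal{A}$-tree $T'$, it applies \cite[Theorem 7.1.7]{RibesBook} to the rank-two abelian group $\langle v,w\rangle$ (for $w\in\link(v)\cap H$) to produce a nontrivial $g\in\langle v,w\rangle$ lying in an edge stabiliser of $T'$, hence universally elliptic; $g$ cannot be a power of $v$, so $w\in\alpha(g)$, and then $g$ acts hyperbolically on the standard tree of $HNN\bigl(G_{\Gamma\smallsetminus\{w\}},G_{\tiny{\link(w)}},\mathrm{id}\bigr)$, which is an $\mathcal{A}$-tree unless $G_{\tiny{\link(w)}}$ is non-abelian. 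This forces $\mbox{Star}(w)$ to be non-complete for every such $w$, i.e.\ $v$ is hanging — a contradiction, since hanging vertices were excised from all vertex groups. You need an argument of this type (extracting a universally elliptic element of $\langle v,w\rangle$ and testing it against the HNN splitting over $\link(w)$) rather than the commuting-transfer shortcut.
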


\begin{proof}
Suppose first that $|V(\Gamma)|=1$, so $G=\mathbb{Z}_{\pi(\C)}$. In this case define a graph $\Theta$ consisting of a single vertex with a loop, so that $V(\Theta)=\{v_\Theta\}$, $E(\Theta)=\{e_\Theta\}$ with $d_0(e_\Theta)=d_1(e_\Theta)=v_\Theta$. Define the corresponding vertex and edge group as $\mathcal{G}_\Theta(e_\Theta)=\mathcal{G}_\Theta(v_\Theta)=1$ (with the natural embedding).
The graph of groups $(\mathcal{G}_\Theta, \Theta)$ satisfies the required properties and the associated pro-$\C$ tree is the $\mathcal{A}$-JSJ decomposition of $G$ because the trivial element is always elliptic. 

Assume now that $|V(\Gamma)|\geq 2$. If $\Gamma$ does not have disconnecting complete graphs, then the trivial decomposition, with $\Theta$ consisting of a single vertex with corresponding group $G$, satisfies the requirements. If $\Gamma$ is a complete graph, then the associated tree decomposition $(T_\Theta,G)$, which is trivial, is the $\mathcal{A}$-JSJ decomposition, see discussion in Example \ref{ex: JSJ of complete graphs}. Similarly, if $\Gamma$ is not complete and it has no disconnecting complete subgraph, then the $\mathcal{A}$-JSJ decomposition is trivial by Theorem \ref{thm: abelian splitting of RAAGs}.

In the case when $\Gamma$ has disconnecting complete subgraphs, we first consider the graph of pro-$\C$ groups $(\mathcal{G}_\Delta, \Delta)$ as described in Theorem \ref{thm: A-H JSJ decomposition}.

Let $HV(\Gamma)\subset V(\Gamma)$ be the set of hanging vertices of $\Gamma$. We claim that, for each $v\in HV(\Gamma)$, the standard subgroup $G_{\tiny{\mbox{Star}(v)}}$ coincides with an abelian vertex group of $(\mathcal{G}_\Delta, \Delta)$. Since by definition $\mbox{Star}(v)$ is complete, $G_{\tiny{\mbox{Star}(v)}}$ is abelian and so by Lemma \ref{lem: complete graphs elliptical} the action of this subgroup on $T_\Delta$ is elliptic and therefore a conjugate of this subgroup is contained in at least one vertex group of $(\mathcal{G}_\Delta, \Delta)$. As each of these vertex groups is a standard subgroup, by Lemma \ref{lem: minimal standard subgroups} $G_{\tiny{\mbox{Star}(v)}}$ itself is contained in them. Notice that if $\mbox{Star}(v)$ disconnects a graph, then so does $\link(v)$, and similarly, if $\mbox{Star}(v)$ is contained in a complete disconnecting subgraph $K$, then $K\setminus \{v\}$ is also a disconnecting subgraph. Since edge groups are minimal complete disconnecting subgraphs of a subgraph of $\Gamma$, see Theorem \ref{thm: A-H JSJ decomposition}, from the latter observation we have that $G_{\tiny{\mbox{Star}(v)}}$ cannot be contained in any edge group of the graph of groups $(\mathcal G_{\Delta}, \Delta)$ and so $G_{\tiny{\mbox{Star}(v)}}$ is contained in a unique vertex group, namely the vertex group $G_{\Gamma'}$, which by Theorem \ref{thm: A-H JSJ decomposition} is a standard subgroup associated with some subgraph $\Gamma' < \Gamma$. If $G_{\tiny{\mbox{Star}(v)}} \lneq G_{\Gamma'}$, then there is no edge between vertices in $\Gamma' \smallsetminus \mbox{Star}(v) \ne \varnothing$ and $v$, and so $G_{\Gamma'}$ is not abelian and $\link(v)$ is a disconnecting subgraph of $\Gamma'$, contradicting the vertex group description of Theorem \ref{thm: A-H JSJ decomposition}. Therefore, we have that $G_{\tiny{\mbox{Star}(v)}}$ is precisely the vertex group.

Similarly, $v$ cannot be contained in any edge group of $(\mathcal{G}_\Delta, \Delta)$ because such groups are minimal disconnecting complete graphs $K$, and $K\smallsetminus v$ would also be a disconnecting complete graph. For this reason, for each $v\in HV(\Gamma)$, there exists only a single vertex $d_v\in V(\Delta)$ such that $\langle v \rangle \leq \mathcal{G}_\Delta(d_v)$. Notice that, as $\mathcal{G}_\Delta(d_v)$ is abelian, it is immediate from the definition of hanging vertex that $v$ is the only hanging vertex contained in $\mathcal{G}_\Delta(d_v)$, so $d_{v_1}\neq d_{v_2}$ for each distinct $v_1, v_2 \in HV(\Gamma)$.

We define a graph of groups $(\mathcal{G}_{\Delta_0},\Delta_0)$ in the following way. We define $V(\Delta_0)=V(\Delta)$ and $E(\Delta_0)=E(\Delta)\cup \{e_v\mid v\in HV(\Gamma)\}$, where $d_0(e_v)=d_1(e_v)=d_v$. For $w\in V(\Delta_0)$, if $w=d_v$ for some $v\in HV(\Gamma)$, then we set $\mathcal{G}_{\Delta_0}(d_v)=\mathcal{G}_{\Delta_0}(e_v)=G_{\tiny{\link(v)}}$ (the embeddings from the edge groups to the vertex groups are the identity) and otherwise, we set $\mathcal{G}_{\Delta_0}(w)=\mathcal{G}_{\Delta}(w)$ for $w\in V(\Delta_0), w\ne d_v$, $v$ a handing vertex and $\mathcal{G}_{\Delta_0}(e)=\mathcal{G}_{\Delta}(e)$ for $e\in E(\Delta)$. As we observed above, since hanging vertices do not belong to any edge group, the embeddings from edges groups to vertex groups in $(\mathcal G_{\Delta}, \Delta)$ also define embeddings in $(\mathcal G_{\Delta_0}, \Delta_0)$. 

The graph of pro-$\C$ groups $(\mathcal{G}_{\Delta_0},\Delta_0)$ may not be reduced, so we define $(\mathcal{G}_\Theta, \Theta)$ as the reduced graph of groups obtained from $(\mathcal{G}_{\Delta_0},\Delta_0)$. By construction, the graph of groups $(\mathcal{G}_\Theta, \Theta)$ is reduced. The underlying graph $\Theta$ is obtained from $\Delta_0$ by collapsing some edges and in turn, the graph $\Delta_0$ is obtained by adding loops to the tree $\Delta$ and therefore $\Theta$ is a tree with loops. Vertex and edge groups are either equal to $G_{\tiny{\link(v)}}$ for some $v\in HV(\Gamma)$ or they inherit the structure of vertex and edge groups of $(\mathcal{G}_\Delta, \Delta)$. No hanging vertex can be contained in any vertex group by construction. As the pro-$\C$ fundamental group of each vertex with loop $(\mathcal{G}_{\Delta_0}, \{d_v,e_v\})$ is exactly the pro-$\C$ fundamental group of $(\mathcal{G}_\Delta, \{d_v\})$, the pro-$\C$ fundamental group of $(\mathcal{G}_\Theta, \Theta)$ is also $G$. Therefore, the decomposition of $G$ as a fundamental group of graph of groups satisfies the requirements of the statement.

In order to conclude, we have to check that the standard tree $(T_\Theta,G)$ associated with the decomposition given for $G$ as a fundamental group of a graph of groups is an $\mathcal{A}$-JSJ tree. Edge stabilisers are either conjugates of a standard subgroup $G_{\tiny{\link(v)}}$ for some $v\in HV(\Gamma)$, and in this case they act universally elliptic on any $\mathcal{A}$-trees by Lemma \ref{lem: Groves Hull}, or they are conjugates of standard subgroups associated with disconnecting complete graphs of $\Gamma$, as in the $(\mathcal{A,H})$-JSJ decomposition. In this case, there exist some subgraphs $\Gamma'$ of $\Gamma$ such that our disconnecting subgraphs are minimal among complete subgraphs that disconnect $\Gamma'$. By Lemma \ref{lem: complete graphs elliptical} (2), edge stabilisers of $T_\Theta$ act universally elliptic on each $\mathcal{A}$-tree on which $G_{\Gamma'}$ acts, and in particular over any $\mathcal{A}$-tree on which $G$ acts. This shows that $(T_\Theta,G)$ is universally elliptic.

In order to prove that $(T_\Theta,G)$ dominates any other $(\mathcal{A})$-tree $(T',G)$, we need to prove that the action on $T'$ of a vertex stabiliser $H$ of $T_\Theta$ is elliptic for each $(T',G)$ universally elliptic $\mathcal{A}$-tree. Up to conjugation, we can assume that $H$ is a standard subgroup that corresponds to a vertex group of $(\mathcal{G}_\Theta, \Theta)$. If $H$ is non-abelian, then it is a standard subgroup associated with a subgraph without disconnecting complete graphs and its action on $T'$ is elliptic by Theorem \ref{thm: abelian splitting of RAAGs}.
Assume now that $H$ is abelian and suppose that there exists a canonical generator $v$ in $H$ such that its action on $T'$ is hyperbolic. We next show that $v$ is a hanging vertex. By Lemma \ref{lem: Groves Hull}, $\mbox{Star}(v)$ must be a complete graph. Let $w\in \link(v)$ such that $w$ is an element of $H$. Since $H$ is abelian and  $\langle v\rangle\cong \widehat\Z_\C$,  $H$ can not be virtually procyclic and so by \cite[Theorem 7.1.7]{RibesBook}, there must be a nontrivial element $g\in \langle v, w \rangle$ contained in an edge stabiliser of $T'$. As $(T',G)$ is a universally elliptic $\mathcal{A}$-tree, $g$ must be a universally elliptic element and $w\in \alpha(g)$. By Remark \ref{rem: hyperbolic splitting}, the action of $g$ on the standard pro-$\C$ tree of the pro-$\C$ HNN extension
$$G=HNN(G_{\Gamma \smallsetminus \{w\}}, G_{\tiny{\link(w)}}, id)$$
is hyperbolic, and since $g$ is universally elliptic on $\mathcal A$-trees, this implies that $G_{\tiny{\link(w)}}$ is not abelian and so $\mbox{Star}(w)$ is not abelian either. As this is true for each $w\in \link(v)$, we conclude that $v$ is a hanging vertex. However, by the construction of the decomposition, hanging vertices are not contained in any vertex stabilisers of $T_\Theta$ and so we arrived at a contradiction. This proves that the action on $T'$ of each canonical generator in $H$ is elliptic and, applying Lemma \ref{lem: complete graphs elliptical} (1), we conclude that $H$ is elliptic for its action on $T'$, as desired.

This proves that $(T_\Theta, G)$ is an $\mathcal{A}$-JSJ decomposition of $G$.
\end{proof}

We provide an example of an $(\mathcal{A,H})$-JSJ decomposition and an $\mathcal{A}$-JSJ decomposition of a pro-$\C$ RAAG.

\begin{example}
Consider the pro-$\C$ RAAG associated with the graph $P_4$, which is

\begin{center}
\begin{tikzpicture}[Ahmadi/.style={circle,fill,draw,inner sep=0pt,minimum size=3pt}]
\node [Ahmadi](a) at (0,0) [label=above:$a$]{};
\node [Ahmadi](b) at (2,0) [label=above:$b$]{};
\node [Ahmadi](c) at (4,0) [label=above:$c$]{};
\node [Ahmadi](d) at (6,0) [label=above:$d$]{};

\path [>=latex,shorten >=0.1]
(a) edge (b)
(b) edge (c)
(c) edge (d)
;
\end{tikzpicture}    
\end{center}
As the $(\mathcal{A,H})$ and $\mathcal{A}$-JSJ decompositions are uniquely determined by the associated graph of groups, we describe only this graph of groups, writing edge and vertex groups next to the corresponding edge and vertex.
A minimal disconnecting complete graph in $P_4$ is $b$. The subgroup $\langle a,b \rangle$ is abelian, whereas $c$ is a disconnecting complete subgraph of the graph generated by $b,c,d$.
The graph of groups decomposition of $G_{P_4}$

\begin{center}
\begin{tikzpicture}[Ahmadi/.style={circle,fill,draw,inner sep=0pt,minimum size=3pt}]
\node [Ahmadi](a) at (0,0) [label=above: {$\langle a,b \rangle$}]{};
\node [Ahmadi](b) at (2,0) [label=above:{$\langle b,c \rangle$}]{};
\node [Ahmadi](c) at (4,0) [label=above:{$\langle c,d \rangle$}]{};
\node (e1) at (1,0)[label=below:{$\langle b\rangle$}]{};
\node (e1) at (3,0)[label=below:{$\langle c \rangle$}]{};

\path [>=latex,shorten >=0.1]
(a) edge (b) 
(b) edge (c)
;
\end{tikzpicture}
\end{center}

satisfies the conditions of Theorem \ref{thm: A-H JSJ decomposition} and so the corresponding tree defines an $(\mathcal{A,H})$-JSJ decomposition of $P_4$.

The vertex $a$ and $d$ are hanging vertices, because $\mbox{Star}(a)$, $\mbox{Star}(d)$ are complete graphs and $\mbox{Star}(b)$, $\mbox{Star}(c)$ are not complete. For this reason we substitute each of the vertices corresponding to $\langle a,b \rangle$ and $\langle c,d \rangle$ with a vertex and a loop, both with associated group $G_{\tiny{\link(a)}}=\langle b \rangle$ and $G_{\tiny{\link(d)}}=\langle c \rangle$ respectively.

\begin{center}
\begin{tikzpicture}[Ahmadi/.style={circle,fill,draw,inner sep=0pt,minimum size=3pt}]
\node [Ahmadi](a) at (0,0) [label=above: {$\langle b \rangle$}]{};
\node [Ahmadi](b) at (2,0) [label=above:{$\langle b,c \rangle$}]{};
\node [Ahmadi](c) at (4,0) [label=above:{$\langle c \rangle$}]{};
\node (e1) at (1,0)[label=below:{$\langle b \rangle$}]{};
\node (e1) at (3,0)[label=below:{$\langle c \rangle$}]{};

\path[every loop/.style={looseness=60}] 
(a) edge [in=210,out=150,loop] node [left] {{$\langle b \rangle$}} (a);
\path[every loop/.style={looseness=60}]
(c) edge [in=330,out=30,loop] node [right] {{$\langle c \rangle$}} (c);
\path [>=latex,shorten >=0.1] (a) edge (b);
\path [>=latex,shorten >=0.1] (b) edge (c);
\end{tikzpicture}
\end{center}

This graph of groups is not reduced. After reducing it,
\begin{center}
\begin{tikzpicture}[Ahmadi/.style={circle,fill,draw,inner sep=0pt,minimum size=3pt}]
\node [Ahmadi](b) at (2,0) [label=above:{$\langle b,c \rangle$}]{};

\path[every loop/.style={looseness=60}] 
(b) edge [in=210,out=150,loop] node [left] {$\langle b \rangle$} (b);
\path[every loop/.style={looseness=60}]
(b) edge [in=330,out=30,loop] node [right] {$\langle c \rangle$} (c);
;
\end{tikzpicture}
\end{center}

we obtain a graph of groups decomposition of $P_4$ satisfying the conditions of Theorem \ref{thm: A JSJ decomposition} and so the associated tree is a $\mathcal{A}$-JSJ decomposition of $G_{P_4}$.
\end{example}

\bibliographystyle{alpha}
\bibliography{JSJpropraags}

\end{document}